\documentclass[12pt,a4paper]{amsart}


\usepackage[pdftex,pagebackref]{hyperref}
\hypersetup{letterpaper=true,colorlinks=true}
\hypersetup{pdfpagemode=UseNone,urlcolor=blue}
\hypersetup{linkcolor=blue,citecolor=blue,pdfstartview=FitH}

\usepackage{amsmath,amsfonts,amssymb,amsthm}
\usepackage{graphicx}
\usepackage{enumerate}
\usepackage{verbatim}

\usepackage{color}

\usepackage{ulem}

\def\emph#1{\textit{#1}}

\input xy
\xyoption{all}

\setlength{\parindent}{0in}
\setlength{\parskip}{5px}

\newenvironment{acknowledgements}{\textbf{Acknowledgements.}}{}

\newenvironment{notations}{\textbf{Notations.}}{}


\renewcommand{\geq}{\geqslant}

\renewcommand{\tilde}{\widetilde}

\newcommand{\ra}{\rightarrow}
\newcommand{\lra}{\longrightarrow}

\newcommand{\hdot}{{\:\raisebox{3pt}{\text{\circle*{1.5}}}}}


\def\C{{\mathbb C}}

\renewcommand{\L}{\mathbb{L}}
\newcommand{\cO}{\mathcal{O}}
\newcommand{\cK}{\mathcal{K}}
\newcommand{\cE}{\mathcal{E}}
\newcommand{\cF}{\mathcal{F}}

\newcommand{\cZ}{\mathcal{Z}}

\newcommand{\cX}{\mathcal{X}}
\newcommand{\cY}{\mathcal{Y}}

\newcommand{\cM}{\mathcal{M}}


\newtheorem{theorem}{Theorem}[section]
\newtheorem{lemma}[theorem]{Lemma}
\newtheorem{proposition}[theorem]{Proposition}

\theoremstyle{definition}
\newtheorem{definition}[theorem]{Definition}
\newtheorem{example}[theorem]{Example}

\theoremstyle{remark}
\newtheorem{remark}[theorem]{Remark}

\renewcommand{\hom}{\textrm{Hom}}
\newcommand{\cok}{\textrm{coker}}

\newcommand{\spec}{\textrm{Spec}}

\newcommand{\ext}{\textrm{Ext}}

\newcommand{\im}{\textrm{Im}}

\newcommand{\chom}{\mathcal{H}\mathit{om}}
\newcommand{\cend}{\mathcal{E}\mathit{nd}}
\newcommand{\rhom}{\textrm{RHom}}
\newcommand{\crhom}{\mathcal{R}\mathcal{H}\mathit{om}}
\newcommand{\der}{\textrm{Der}}
\newcommand{\exal}{\textrm{Exal}}

\newcommand{\rpils}{\textrm{R}\pi_*}


\title[Moduli Spaces and Symplectic Stacks]{Moduli Spaces of Sheaves on K3 Surfaces and Symplectic Stacks}
\date{\today}
\author{Ziyu Zhang}
\address{Max Planck Institute for Mathematics, Vivatsgasse 7, 53111 Bonn, Germany}
\email{zhangzy@mpim-bonn.mpg.de}

\begin{document}

\maketitle

\begin{abstract}
We view the moduli space of semistable sheaves on a K3 surface as a global quotient stack, and compute its
cotangent complex in terms of the universal sheaf on the Quot scheme. Relevant facts on the classical and 
reduced Atiyah classes are reviewed. We also define the notion of a symplectic
stack, and show that it includes all moduli stacks of semistable sheaves on K3 surfaces. 
\end{abstract}

\tableofcontents


\section{Introduction}

This paper grows out of the attempt of studying moduli spaces of semistable sheaves on K3 surfaces and 
holomorphic symplectic manifolds from a new point of view. 

Holomorphic symplectic manifolds are complex manifolds with nowhere degenerate holomorphic 2-forms. 
They have very rich geometry and beautiful properties, mainly due to the interaction of two structures on
the second cohomology group, namely, the weight 2 Hodge decomposition and the Beauville-Bogomolov pairing. 
For example, they have unobstructed deformations \cite{Bogomolov-1978, Tian-1987, Todorov-1989, 
Ran-1992, Kawamata-1992}, local and global Torelli theorems \cite{Beauville-1983, Verbitsky-2009, Huybrechts-2011}. 
Furthermore, birational irreducible holomorphic symplectic manifolds are always deformation equivalent 
\cite{Huybrechts-2003}. Looking for examples of holomorphic symplectic manifolds is always a central
problem in this area. 

On the other hand, moduli spaces of semistable sheaves on a projective variety
\cite{Gieseker-1977, Maruyama-1977, Maruyama-1978}
has been a very popular research area in differential
geometry, algebraic geometry, gauge theory and theoretical physics since a long time ago. When the underlying
variety is a K3 surface, Mukai \cite{Mukai-1984} constructed a non-degenerate holomorphic 2-form on the smooth 
locus of the moduli space. Therefore, the smooth moduli spaces of sheaves on K3 surfaces provide a whole
series of examples of irreducible holomorphic symplectic manifolds. A similar results on abelian surfaces \cite{Beauville-1983}
yields another series of examples, which are are the so called generalized Kummer varieties. For quite a long time
these are the only known examples of irreducible symplectic varieties. A natural question to ask at this stage is:
since Mukai has showed the existence of a  holomorphic 2-form on the smooth locus of any singular moduli 
space of semistable sheaves on K3 surfaces, is there any way to turn these singular spaces into holomorphic 
symplectic manifolds? 

O'Grady's work \cite{OGrady-1999,OGrady-2003} partly answered this question. He studied such a 10-dimensional singular
moduli space, as well as a 6-dimensional moduli space of sheaves over an abelian surface, and constructed their symplectic 
resolutions. A comparison of topological invariants shows that they are two new examples of irreducible symplectic manifolds. 
Some work was done along this route, and eventually, Kaledin, Lehn and Sorger showed that, O'Grady's example was the only
one which could arise by desingularizing moduli spaces of sheaves on K3 surfaces \cite[Theorem 6.2]{Kaledin-2006}. 
This result is somehow a negative one which excludes many of the moduli spaces 
from the game, although they are very close to be symplectic manifolds. 

At the same time, people are trying to generalize the notion of symplectic manifolds to allow singularities. Beauville defined
the notion of symplectic singularities in \cite{Beauville-2000}. After that a lot of work was extensively done by many other 
people, such as \cite{Kaledin-2006a,Namikawa-2001,Namikawa-2001a}. In particular, in \cite[Theorem 6.2]{Kaledin-2006}, 
it is proved that all singular moduli spaces of sheaves on K3 surfaces are (singular) symplectic varieties in the sense of 
\cite{Beauville-2000}. 

Another reason why we should enlarge the notion of holomorphic symplectic manifolds to include all singular moduli 
spaces of sheaves on K3 surfaces roots in enumerative geometry and theoretical physics. In recent years, the study of
Donaldson-Thomas type invariants has grow into a large area involving many modern techniques in many different fields 
in algebraic geometry, such as deformation theory, stacks, derived categories and motives. Although a lot of work about 
Donaldson-Thomas type invariants on Calabi-Yau 3-folds is done, not so much is known on a K3 surface. On the other hand, 
Vafa and Witten in \cite{Vafa-1994} predicted from S-duality that the generating function of the Euler characteristics of
instanton moduli spaces on K3 surfaces has a modularity property. As a consequence, the Euler characteristics of singular
moduli spaces could be all determined by those of the smooth ones, with possible denominators 2 or 4. Mathematically 
there's no convincing definition of the Euler characteristics (which are presumbly Donaldson-Thomas type invariants) 
needed for this conjecture so far, and the contribution of the singularities of the moduli spaces to the denominators remains 
a mystery. 

In this paper, we are trying to generalize the notion of holomorphic symplectic manifolds into the stacky world. So that 
one has the possibility of dealing with all moduli spaces of semistable sheaves on K3 surfaces, when considered as Artin
stacks, in a uniform way, without the necessity of distinguishing them by the existence of symplectic resolutions. The 
role of the holomorphic symplectic form in the definition of the holomorphic symplectic manifolds, is to provide an isomorphism
of the tangent bundle, or rather the cotangent bundle, with its dual, such that the isomorphism is anti-symmetric. 
We generalize the category of manifolds to stacks, and replace the cotangent bundle by the cotangent complex. Therefore, 
motivated by the work on symmetric obstruction theories in
\cite{Behrend-2008}, in this paper we define the notion of a \textit{symplectic stack} as follows: 

\begin{definition}
A \textit{symplectic stack} is an algebraic stack, whose cotangent complex is a \textit{symplectic complex}, namely, 
a complex equipped with a  non-degenerate anti-symmetric bilinear pairing. 
\end{definition}
The precise definitions
can be found in Definition \ref{_definition_complex_}, \ref{_bad_definition_for_stack_} and \ref{_definition_stack_}. 

Besides the trivial examples of symplectic manifolds and quotients of symplectic manifolds by finite subgroups of symplectomorphisms, 
the major part of this paper is devoted to study the question, whether moduli stacks of semistable sheaves on K3 surfaces, when viewed
as a global quotient stack of the GIT-semistable locus of Quot scheme by the gauge group, are examples of symplectic stacks. 
The difficulty lies in the computation of their cotangent complexes. More precisely, we will prove the following results 
(see Theorem \ref{_quot_cotangent_complex_} and \ref{_stack_cotangent_complex_}): 

\begin{theorem}
The cotangent complexes of the GIT-semistable locus of the Quot scheme $Q$ and the moduli stack of semistable sheaves $\cM$ can 
be expressed explicitly by the universal quotient sequence on $Q$. More precisely, under the notations given at the end of this 
section, we have quasi-isomorphisms
\begin{eqnarray*}
\rpils\rhom(\cK,\cF)_0^\vee &\stackrel{\cong}{\lra}& \L_Q\\
\rpils\rhom(\cF,\cF)_0^\vee[-1] &\stackrel{\cong}{\lra}& q^*\L_{\cM}.
\end{eqnarray*}
\end{theorem}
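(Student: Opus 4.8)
The plan is to prove the two quasi-isomorphisms in turn, deriving the second from the first. For the Quot scheme I would produce the comparison map $\rpils\rhom(\cK,\cF)_0^\vee \lra \L_Q$ from the (reduced) Atiyah class of the universal quotient, and show it is a quasi-isomorphism using the deformation theory of the Quot functor. For the moduli stack I would use the presentation $\cM=[Q/G]$ of the semistable locus by the gauge group $G$, and compare the distinguished triangle attached to the smooth atlas $q\colon Q\to\cM$ with the long exact sequence obtained by applying $\rpils\rhom(-,\cF)_0$ to the universal quotient sequence.

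\textbf{The Quot scheme.} Write the universal quotient sequence $0\to\cK\to p^\ast H\to\cF\to0$ on $Q\times X$. The relative Atiyah class of $\cF$ (equivalently, of the quotient map) induces a canonical morphism $\rpils\rhom(\cK,\cF)_0^\vee\to\L_Q$; this is the obstruction-theory map reflecting the fact that the tangent space of $Q$ at a quotient $[H\twoheadrightarrow F]$ is $\hom(K,F)_0$ and the obstructions lie in $\ext^1(K,F)_0$. Choosing $H=\cO_X(-m)^{\oplus N}$ with $m\gg0$ forces $\ext^{\ge2}(K,F)=0$, so $\rpils\rhom(\cK,\cF)_0$ is concentrated in degrees $0$ and $1$ and its dual is perfect in $[-1,0]$. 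To upgrade the obstruction-theory map to a quasi-isomorphism I would invoke that the Quot functor is represented by a scheme whose square-zero deformation theory is governed \emph{exactly} by $\rpils\rhom(\cK,\cF)_0$ in these two degrees, which identifies $\L_Q$ with the two-term complex and forces the comparison map to be an isomorphism on all cohomology sheaves.

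\textbf{The moduli stack.} Since $q\colon Q\to\cM=[Q/G]$ is a $G$-torsor, it is smooth with $\L_{Q/\cM}\cong\fg^\vee\otimes\cO_Q$ in degree $0$, so the fundamental triangle reads $q^\ast\L_\cM \lra \L_Q \lra \fg^\vee\otimes\cO_Q \lra q^\ast\L_\cM[1]$, where the second map is dual to the infinitesimal action $\fg\otimes\cO_Q\to T_Q$. On the other hand, applying $\rpils\rhom(-,\cF)_0$ to the universal sequence and using $\rpils\rhom(p^\ast H,\cF)_0\cong\fg\otimes\cO_Q$ (again valid for $m\gg0$, since then only $\hom(H,F)_0$ survives, canonically identified with the Lie algebra $\fg$ of the gauge group) produces a triangle $\rpils\rhom(\cF,\cF)_0 \lra \fg\otimes\cO_Q \lra \rpils\rhom(\cK,\cF)_0 \lra$. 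Dualizing this triangle, invoking the first quasi-isomorphism to replace $\rpils\rhom(\cK,\cF)_0^\vee$ by $\L_Q$, and rotating, I obtain a second triangle $\rpils\rhom(\cF,\cF)_0^\vee[-1] \lra \L_Q \lra \fg^\vee\otimes\cO_Q \lra$. The two triangles share both $\L_Q$ and $\fg^\vee\otimes\cO_Q$; provided their maps $\L_Q\to\fg^\vee\otimes\cO_Q$ coincide, the third vertices are canonically isomorphic, giving $q^\ast\L_\cM\cong\rpils\rhom(\cF,\cF)_0^\vee[-1]$.

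\textbf{Main obstacle.} The crux is the last compatibility: the map $\fg\otimes\cO_Q=\rpils\rhom(p^\ast H,\cF)_0\to\rpils\rhom(\cK,\cF)_0$, which is restriction along $\cK\hookrightarrow p^\ast H$, must be identified with the infinitesimal $G$-action on $Q$. Pointwise this reduces to checking that the composite $\fg=\hom(H,H)\xrightarrow{\sim}\hom(H,F)\to\hom(K,F)$ agrees with $\phi\mapsto(K\hookrightarrow H\xrightarrow{\phi}H\twoheadrightarrow F)$, where the first isomorphism uses $\hom(H,K)=0$ for $m\gg0$ and the second arrow is restriction. The real work is to promote this pointwise identity to an equality of morphisms of complexes compatible with the Atiyah-class description, using the functoriality of the Atiyah class with respect to the $G$-equivariant structure. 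I expect this naturality statement, together with the quasi-isomorphism (rather than mere obstruction-theory) assertion in the Quot step, to be the technical heart of the argument.
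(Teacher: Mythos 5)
Your proposal follows the same skeleton as the paper (reduced Atiyah class giving the comparison map for $Q$; matching the transitivity triangle of $q\colon Q\to\cM$ against the dualized traceless triangle of the universal quotient), but both of your key steps contain genuine gaps. For the Quot scheme, the inference ``the square-zero deformation theory of the Quot functor is governed exactly by $\rpils\crhom(\cK,\cF)_0$ in degrees $0$ and $1$, hence the comparison map is an isomorphism on all cohomology sheaves'' does not work. What deformation theory gives you is only that the map is an obstruction theory: an isomorphism on $H^0$ (via the universal property of $Q$ --- this is how the paper, following Gillam, handles degree $0$) together with the fact that obstructions land in $\ext^1(K,F)_0$. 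An obstruction theory need not be a quasi-isomorphism: the obstruction space can strictly contain the actual obstructions, i.e.\ $H^{-1}$ of the proposed complex can fail to inject into $H^{-1}(\L_Q)$, and whether that happens here is exactly the geometric content of the theorem. The paper needs two vanishing statements your proposal never touches: (i) $Q$ is a local complete intersection, so $\L_Q$ is a single sheaf in degree $0$ --- this uses the irreducibility of the semistable Quot locus (Kaledin--Lehn--Sorger, Yoshioka) together with the standing assumption that a stable point exists; and (ii) $\rpils\crhom(\cK,\cF)_0^\vee$ is also concentrated in degree $0$ --- proved by observing that its degree $-1$ fiber cohomology $(\ext^1(K_p,F_p)_0)^\vee\cong(\ext^2(F_p,F_p)_0)^\vee$ vanishes on the dense stable locus $Q^s$, so the kernel of a two-term locally free resolution $A^{-1}\to A^0$ is a torsion subsheaf of a locally free sheaf, hence zero. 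Only after both complexes are known to be sheaves in degree $0$ does the $H^0$-isomorphism finish the proof.

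For the stack, you correctly isolate the crux --- that restriction along $\cK\hookrightarrow\cE$, under $\rpils\crhom(\cE,\cF)_0\cong\fg\otimes\cO_Q$, must be matched with the dual of the infinitesimal action $\L_Q\to\L_{Q/\cM}\cong\fg^\vee\otimes\cO_Q$ --- but you leave it as an expectation, and this is precisely what the paper identifies as the main obstacle. Two separate arguments are required, neither of which is a promotion of your pointwise check. First, commutativity of the square: the paper shows the composite $\cF[-1]\to\cF\otimes\pi^*\L_Q\to\cF\otimes\pi^*\L_{Q/\cM}$ (classical Atiyah class followed by the functorial map) vanishes, by pulling back along the action map $m\colon G\times Q\to Q$, using the descent identification $m^*\cF\cong pr_2^*\cF$ to see the pulled-back Atiyah class has no component in $\ext^1(\tilde\cF,\tilde\cF\otimes pr_1^*\L_G)$, and then restricting along $q\mapsto(1,q)$; this global equivariance argument is the actual content behind ``functoriality of the Atiyah class.'' Second, even once the square commutes, the induced arrow $\rpils\crhom(\cE,\cF)_0^\vee\to\L_{Q/\cM}$ is a priori just some map of trivial bundles of rank $N^2-1$, not obviously your chosen identification; the paper proves it is an isomorphism over $Q^s$ (freeness of the $G$-action on the stable locus) and then over all of $Q$ because its degeneracy locus would be a Cartier divisor while the strictly semistable locus has codimension at least $2$. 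Without these two arguments the triangles cannot be glued, so the second quasi-isomorphism remains unproved in your proposal.
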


And by using Serre duality, we finally get a positive conclusion to the above question, that is (see Theorem \ref{_moduli_symplectic_stack_}):

\begin{theorem}
The moduli stack $\cM$ of semistable sheaves on a K3 surfaces is a symplectic stack.
\end{theorem}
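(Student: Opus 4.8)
The plan is to deduce the symplectic structure directly from the computation of the cotangent complex in Theorem~\ref{_stack_cotangent_complex_}, using Serre duality on the K3 surface to produce the self-duality and the trace pairing to produce anti-symmetry. Throughout write $q\colon Q\to\cM$ for the smooth atlas realizing $\cM$ as the global quotient $[Q/G]$ by the gauge group $G$, let $\pi\colon X\times Q\to Q$ be the projection, and set $E:=\rpils\rhom(\cF,\cF)_0$. By Theorem~\ref{_stack_cotangent_complex_} there is a quasi-isomorphism $q^*\L_\cM\cong E^\vee[-1]$, so it suffices to equip $E^\vee[-1]$ with a non-degenerate anti-symmetric pairing and to check that it descends along $q$.

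First I would establish the self-duality of $E$. Since $X$ is a K3 surface, the relative dualizing complex of $\pi$ is $\omega_\pi[2]\cong\cO_{X\times Q}[2]$, so relative Grothendieck--Serre duality gives a canonical quasi-isomorphism
\[
\rpils\rhom(\cF,\cF)\;\cong\;\rpils\rhom(\cF,\cF)^\vee[-2].
\]
The trace splitting $\rhom(\cF,\cF)\cong\cO_{X\times Q}\oplus\rhom(\cF,\cF)_0$ is preserved by duality, so restricting to the trace-free summand yields $E\cong E^\vee[-2]$. Combining this with Theorem~\ref{_stack_cotangent_complex_} gives
\[
(q^*\L_\cM)^\vee\;\cong\;E[1]\;\cong\;E^\vee[-1]\;\cong\;q^*\L_\cM,
\]
so $q^*\L_\cM$ is $0$-shifted self-dual; non-degeneracy of the associated pairing is automatic, being realized by a quasi-isomorphism.

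The central point is then anti-symmetry. The self-duality above is induced by the pairing
\[
E\otimes E\xrightarrow{\ \circ\ }\rpils\rhom(\cF,\cF)\xrightarrow{\ \mathrm{tr}\ }\rpils\cO_{X\times Q}\to\cO_Q[-2]
\]
given by composing (trace-free) endomorphisms, applying the trace, and projecting onto the top direct image $R^2\pi_*\cO_{X\times Q}\cong\cO_Q$; on cohomology this recovers Mukai's pairing on $\ext^1(\cF,\cF)_0$. Anti-symmetry comes from the graded cyclicity of the trace, $\mathrm{tr}(\alpha\circ\beta)=(-1)^{|\alpha||\beta|}\mathrm{tr}(\beta\circ\alpha)$, which contributes a sign $-1$ on the degree-one part. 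I expect this to be the main obstacle: verifying the sign at the level of the cotangent complex, rather than merely on cohomology, requires keeping track of the conventions in Serre duality and the compatibility of the trace with the duality quasi-isomorphism in the derived category.

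Finally I would descend the pairing from $Q$ to $\cM$. Because $\cM=[Q/G]$ and the universal sheaf $\cF$ is $G$-equivariant, both the Serre duality quasi-isomorphism and the trace pairing are canonical and therefore $G$-equivariant; the self-duality of $q^*\L_\cM$ is thus the pullback along the faithfully flat map $q$ of a non-degenerate anti-symmetric pairing on $\L_\cM$ itself. This exhibits $\L_\cM$ as a symplectic complex, so $\cM$ is a symplectic stack in the sense of Definition~\ref{_definition_stack_}.
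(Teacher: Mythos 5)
Your proposal is correct and follows essentially the same route as the paper: it uses the quasi-isomorphism of Theorem \ref{_stack_cotangent_complex_}, relative Serre duality with the K3 trivialization of $\omega_{\pi}$ restricted to the trace-free summand for non-degenerate self-duality, the graded symmetry of the trace pairing (turned into anti-symmetry by the degree shift) for the symplectic condition, and the canonical identification $m^*\cF = pr_2^*\cF$ on $Q\times_{\cM}Q \cong G\times Q$ for descent to $\cM$. The only difference is one of completeness, not of method: the paper carries out the sign computation $(-1)^{(\deg e+1)(\deg e'+1)} = (-1)^{\deg e\,\deg e'+1}$ (using $\deg e + \deg e' = 0$) that you flag as the main obstacle but leave unverified.
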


The main techniques in the computation were adopted from the paper \cite{Huybrechts-2010} of Huybrechts and Thomas
on the application of Atiyah class on deformation theory of complexes, and the paper \cite{Gillam-2011} of Gillam on the
application of reduced Atiyah class on the deformation theory of quotients. The paper is organized as follows: 

In section 2, 
we first of all briefly recall some properties of cotangent complexes which will be used later, then we turn to a short summary 
of classical Atiyah classes and reduced Atiyah classes, including their definitions and properties in the deformation-obstruction theory. 
We will also show that the reduced Atiyah class is a lift of the classical Atiyah class. 

Section 3 is mainly a technical point. Since we will eventually be interested in the moduli space of sheaves with fixed determinant, 
we have to remove the effect of the trace map. This section uses techniques in derived categories to create ``traceless version" of 
all complexes involved in the following sections. 

Section 4 contains the first half of the central computation, which is on the cotangent complex of the GIT-semistable locus of the Quot
scheme. Following \cite{Huybrechts-2010, Gillam-2011}, we use the reduced Atiyah class to establish a morphism from a complex
constructed only from the universal family on the Quot scheme, to the cotangent complex of Quot scheme. Then we show that this
morphism induces isomorphisms on all cohomology groups. 

Section 5 provides the other half of the central computation. We use the transitivity property of the cotangent complex, together 
with the cotangent complex of the Quot scheme computed in previous section to obtain the cotangent complex of the quotient
stack. The commutativity of the diagram \ref{_traceless_commute_} is the major obstacle that we have to overcome in this section. 

In section 6, we take the definition of symmetric obstruction theories in \cite{Behrend-2008} as a model, and
formally introduce the notion of a symplectic stack. As an application of the computations in previous sections, 
we show that the moduli stacks we studied in previous sections are indeed examples of symplectic stacks. 

\begin{notations}
Throughout this paper, $X$ will always be a projective K3 surface, and $H$ is an ample line bundle on $X$, which is used to determine
the stability of sheaves in Gieseker's sense. We always use $Q$ for the GIT-semistable locus of the Grothendieck's Quot scheme used in the 
GIT construction of the moduli space. For simplicity, sometimes we will omit the words ``GIT-semistable locus", but we will never take
the unstable locus into consideration. We denote the two projections from $Q\times X$ by 
\begin{equation*}
\xymatrix{
 & Q\times X \ar_{\pi}[ld] \ar^{\pi_X}[rd] & \\
Q & & X.
}
\end{equation*}
The gauge group $PGL(N)$ in the GIT construction will be denoted by $G$, and the global quotient stack $[Q/G]$ will be denoted by $\cM$. 
We use
$$q: Q\lra \cM$$
for the structure morphism from $Q$ to the global quotient stack $\cM$. 
We always assume that there is at least one stable quotient sheaf. Since the stability is an open condition, 
we denote the open dense subscheme of $Q$ over which
the quotient sheaf is stable by $Q^s$, and the corresponding image of $Q^s$ under $q$  by $\cM^s$. 

We also fix the universal quotient sequence 
\begin{equation}\label{_universal_quotient_}
0 \lra \cK \lra \cE \lra \cF \lra 0
\end{equation}
on the GIT-semistable locus of the Quot scheme, or more precisely, on $Q\times X$. 
Here we should note that $\cE$ is obtained by pulling back a vector bundle on $X$ via the projection $\pi_X$, therefore is
a trivial family over $Q$. 
\end{notations}

\begin{acknowledgements}
The author owes a great debt of gratitude to Professor Jun Li, who suggested and encouraged the author to work towards the new notion 
of symplectic stacks, and to Professor Daniel Huybrechts, who generously shared his idea of formulating the concept using 
cotangent complex, and had numerous helpful discussions with the author on this topic. The author would also like to thank 
Professor Lehn, for his interest and many helpful discussions about this work, and most importantly, for his support 
via SFB/TR 45 during 
the time when most of this paper was written. The author also wants to thank Professor Dan Edidin, Professor Zhenbo Qin, 
Professor Ravi Vakil, Professor Justin Sawon, Professor S\"onke Rollenske, Jason Lo and Timo Sch\"urg for their interests and
discussions. The author also wants to thank Max-Planck-Institute for Mathematics in Bonn for their support during the final 
stage of this work.
\end{acknowledgements}


\section{Cotangent Complexes and Atiyah Classes}

\subsection{Cotangent Complexes}

We first of all recall some properties of cotangent complexes, which will be important for our later discussions. 
The classical reference for cotangent complexes is \cite{Illusie-1971}. For cotangent complexes of stacks, one 
can see \cite[Chapter 17]{Laumon-2000} and \cite{Olsson-2007}. 

\begin{lemma}\cite{Illusie-1971,Laumon-2000}
The cotangent complex of $\mathcal{X}$ is an object in the derived category $D^b(\mathcal{X})$, which
\begin{enumerate}
\item is quasi-isomorphic to a single locally free sheaf in degree $0$ if $\mathcal{X}$ is a smooth scheme;
\item has perfect amplitude in $[-1,0]$ and is quasi-isomorphic to a single sheaf in degree $0$ if $\mathcal{X}$ is 
a scheme of locally complete intersection;
\item has perfect amplitude in $(-\infty, 0]$ if $\mathcal{X}$ is a scheme or a Deligne-Mumford stack;
\item has perfect amplitude in $(-\infty, 1]$ if $\mathcal{X}$ is an Artin stack. 
\end{enumerate}
\qed
\end{lemma}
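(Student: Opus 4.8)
The plan is to read all four assertions as statements about the cohomological amplitude of $\L_{\mathcal{X}}$ and to reduce them to a few standard local inputs together with the transitivity triangle. First I would recall the construction of $\L_{\mathcal{X}}$ from a simplicial resolution of the structure sheaf by free algebras: since $\L_{\mathcal{X}}$ is the associated module of K\"ahler differentials of such a resolution, it is represented by a complex concentrated in non-positive cohomological degrees, with $H^0=\Omega^1_{\mathcal{X}}$. For a scheme this is immediate, and it already yields the ceiling $(-\infty,0]$ of (3) together with the identification of the top cohomology used in (1) and (2).

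Next I would dispose of the two regular cases. For (1), smoothness lets the simplicial resolution be taken trivial locally (via \'etale coordinates), so $\L_{\mathcal{X}}\simeq\Omega^1_{\mathcal{X}}$ is a single locally free sheaf in degree $0$; equivalently one invokes the characterization that $\mathcal{X}$ is smooth exactly when $\L_{\mathcal{X}}$ is locally free concentrated in degree $0$. For (2), I would embed $\mathcal{X}$ locally as a regular immersion $i\colon\mathcal{X}\hookrightarrow M$ into a smooth scheme with ideal sheaf $I$, and use that $\L_{\mathcal{X}/M}\simeq (I/I^2)[1]$, the conormal bundle placed in degree $-1$. The transitivity triangle
$$i^*\L_M \lra \L_{\mathcal{X}} \lra \L_{\mathcal{X}/M} \xrightarrow{+1}$$
then presents $\L_{\mathcal{X}}$ as the two-term complex $[\,I/I^2\to i^*\Omega^1_M\,]$ of finite locally free sheaves in degrees $[-1,0]$, which is perfect of amplitude $[-1,0]$ with $H^0=\Omega^1_{\mathcal{X}}$, giving (2).

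For the stacky statements I would pass to an atlas and use descent. A Deligne--Mumford stack admits an \'etale atlas $u\colon U\to\mathcal{X}$ by a scheme; since \'etale morphisms have trivial relative cotangent complex, $u^*\L_{\mathcal{X}}\simeq\L_U$, so the scheme bound $(-\infty,0]$ descends and completes (3). For a general Artin stack (4) I would take instead a smooth atlas $u\colon U\to\mathcal{X}$, so that $\L_{U/\mathcal{X}}\simeq\Omega^1_{U/\mathcal{X}}$ is locally free in degree $0$. Rotating the transitivity triangle $u^*\L_{\mathcal{X}} \to \L_U \to \L_{U/\mathcal{X}} \xrightarrow{+1}$ exhibits $u^*\L_{\mathcal{X}}$ as an extension of the connective complex $\L_U$ by $\L_{U/\mathcal{X}}[-1]$, a locally free sheaf in degree $1$; hence $u^*\L_{\mathcal{X}}\in D^{\le 1}$, and by smooth descent $\L_{\mathcal{X}}$ has amplitude $(-\infty,1]$. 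In the quotient-stack situation $\mathcal{M}=[Q/G]$ that we actually need, the atlas $q\colon Q\to\mathcal{M}$ is a $G$-torsor, so $\L_{Q/\mathcal{M}}\simeq\fg^\vee\otimes\cO_Q$ and the triangle realizes $q^*\L_{\mathcal{M}}$ as an extension of $\L_Q$ by $\fg^\vee\otimes\cO_Q[-1]$; its degree-$1$ cohomology, dual to the Lie algebra of the stabilizers, is exactly what forces the ceiling up from $0$ to $1$.

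The local facts invoked above --- connectivity of the simplicial model, $\L=\Omega^1$ for smooth morphisms, $\L\simeq N^\vee[1]$ for regular immersions, and the existence and functoriality of the transitivity triangle --- are all established in \cite{Illusie-1971}. The one genuinely delicate point is the Artin case: one must know that the cotangent complex of a stack is well defined through the smooth--\'etale site, independently of the chosen atlas, and that descent along a smooth atlas is valid for these a priori unbounded-below complexes, so that the bound $(-\infty,1]$ computed on $U$ really transfers to $\mathcal{X}$. This well-definedness and descent is precisely the content of \cite{Laumon-2000,Olsson-2007}, and is the part one leans on rather than reproving.
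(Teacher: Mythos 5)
Your proposal and the paper part ways in a basic respect: the paper does not prove this lemma at all. It is stated as a summary of foundational facts, with a \qed{} and citations to \cite{Illusie-1971,Laumon-2000}, and the only place the paper engages with the substance is downstream, in Lemma \ref{_quot_lci_}, where case (2) is applied to the semistable Quot locus. Your reconstruction --- connectivity of the simplicial model with $H^0=\Omega^1$, the conormal presentation $[\,I/I^2\to i^*\Omega^1_M\,]$ for a regular immersion, and the transitivity triangle plus smooth descent for the \'etale and smooth atlas cases --- is the standard route and is essentially correct, so you supply an argument where the paper supplies only a pointer. The trade-off is the one you yourself flag: the genuinely hard content (well-definedness of $\L_{\cX}$ on the lisse-\'etale site and descent of amplitude bounds along a smooth atlas) still has to be imported from \cite{Laumon-2000,Olsson-2007}, so your proof is a reduction to the same references the paper cites, not a replacement for them. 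Your closing remark on $[Q/G]$, identifying $\L_{Q/\cM}$ with $\fg^\vee\otimes\cO_Q$ via the torsor structure, is a nice bridge to what the paper actually does in Section 5 (Lemmas \ref{_cotangent_group_} and following).

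One genuine gap should be named, in your treatment of (2) --- though it mirrors an imprecision in the statement itself. Your two-term presentation yields perfect amplitude $[-1,0]$ and $H^0(\L_{\cX})=\Omega^1_{\cX}$, but the further clause ``quasi-isomorphic to a single sheaf in degree $0$'' requires
$$H^{-1}(\L_{\cX})=\ker\bigl(I/I^2\to i^*\Omega^1_M\bigr)=0,$$
and this does not follow from the lci hypothesis alone: for $\cX=\spec k[x]/(x^2)$ the conormal module is free of rank one and the differential $x^2\mapsto 2x\,dx$ has nonzero kernel, so $H^{-1}(\L_{\cX})\neq 0$. Vanishing needs an extra input, e.g.\ that $\cX$ is generically smooth and reduced: lci implies Cohen--Macaulay, so generic reducedness gives reducedness, and then the kernel is a torsion subsheaf of the locally free sheaf $I/I^2$ (it vanishes on the smooth locus), hence zero. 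This is harmless for the paper, because the only lci scheme to which the clause is applied is $Q$, which is irreducible and contains smooth (stable) points by Lemma \ref{_quot_lci_}; but your proof should either add that hypothesis explicitly or restrict the degree-$0$ claim to that situation rather than asserting it for arbitrary lci schemes.
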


The computation of the cotangent complex is in general very difficult, however the following functorial property
turns out to be very helpful in some cases. 

\begin{proposition}\cite{Illusie-1971,Laumon-2000}\label{_cotangent_functorial_}
Let $$\mathcal{X}\stackrel{f}{\lra}\mathcal{Y}\stackrel{g}{\lra}\mathcal{Z}$$ be two morphisms of schemes
or stacks, then we have the following exact triangle in $D^b(\mathcal{X})$:
\begin{equation}
f^*\L_{\cY/\cZ} \lra \L_{\cX/\cZ} \lra \L_{\cX/\cY}. 
\end{equation}
Furthermore, this exact triangle is functorial. Namely, if we have a commutative diagram
\begin{equation*}
\xymatrix{
\cX \ar^f[r] \ar_u[d] & \cY \ar[r] \ar[d] & \cZ \ar[d]\\
\cX' \ar^{f'}[r] & \cY' \ar[r] & \cZ',
}
\end{equation*}
then there are morphisms between two exact triangles (vertical arrows in the following diagram) 
which makes the diagram commute
\begin{equation*}
\xymatrix{
u^*f'^*\L_{\cY'/\cZ'} \ar[r] \ar[d] & u^*\L_{\cX'/\cZ'} \ar[r] \ar[d] & u^*\L_{\cX'/\cY'} \ar[d]\\
f^*\L_{\cY/\cZ} \ar[r] & \L_{\cX/\cZ} \ar[r] & \L_{\cX/\cY}
}
\end{equation*}
\qed
\end{proposition}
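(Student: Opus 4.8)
The plan is to deduce the triangle from the defining simplicial construction of the cotangent complex due to Illusie, where it appears as the derived avatar of the classical first fundamental exact sequence of K\"ahler differentials, and then to propagate the statement from schemes to Artin stacks by smooth descent. First I would treat the scheme case locally. Presenting the morphisms on rings as $\cO_\cZ \lra \cO_\cY \lra \cO_\cX$, I would choose a free simplicial resolution $A_\bullet$ of $\cO_\cY$ over $\cO_\cZ$, and then a free simplicial resolution $B_\bullet$ of $\cO_\cX$ built on top of $A_\bullet$. Since each $B_n$ is a polynomial $A_n$-algebra, hence smooth, the first fundamental sequence of differentials is termwise split, giving a short exact sequence of simplicial $B_\bullet$-modules
$$0 \lra \Omega_{A_\bullet/\cO_\cZ}\otimes_{A_\bullet} B_\bullet \lra \Omega_{B_\bullet/\cO_\cZ} \lra \Omega_{B_\bullet/A_\bullet} \lra 0.$$
Tensoring with $\cO_\cX$ over $B_\bullet$ and passing to associated complexes, the three terms compute $f^*\L_{\cY/\cZ}$, $\L_{\cX/\cZ}$ and $\L_{\cX/\cY}$, and the resulting distinguished triangle is exactly the one asserted. (Here $f^*$ is understood as the derived pullback, which agrees with the naive one because $A_\bullet$ is a flat resolution.)

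For functoriality I would insist on a canonical, choice-free resolution, namely the standard bar resolution attached to the free-commutative-algebra comonad, so that the entire construction becomes a strict functor of the input morphism. A commutative square of morphisms then induces a map of standard resolutions, hence a map of the short exact sequences of differential complexes, hence a morphism of distinguished triangles; the commutativity of the displayed ladder falls out from the naturality of every arrow involved. Sheafifying this local construction (Zariski- or \'etale-locally, using that the standard resolution is compatible with localization) produces the triangle and its functoriality globally on schemes.

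To upgrade from schemes to Artin stacks I would pass to the lisse-\'etale description of $\L_\cX$ following Laumon--Moret-Bailly and Olsson: present each stack by a smooth groupoid, apply the scheme-level triangle on a common smooth atlas, and descend. This descent is the step I expect to be the main obstacle. The difficulty is that the lisse-\'etale topos is not functorial for arbitrary morphisms in the naive sense, so one cannot simply transport the triangle along $f$ and $g$ and glue; overcoming it requires Olsson's corrected pullback formalism to make the derived pullbacks and the smooth descent data interact correctly, and to guarantee that the outcome lands in $D^b(\cX)$ with the perfect-amplitude bounds recorded in the preceding Lemma. Once the triangle and its naturality are established compatibly on atlases, the \'etale-local uniqueness of the cotangent complex glues the pieces into the asserted exact triangle and its functoriality for stacks.
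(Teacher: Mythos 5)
This proposition is quoted in the paper as a known result --- there is no in-paper proof beyond the citation of Illusie and Laumon--Moret-Bailly/Olsson --- and your sketch correctly reproduces the argument of exactly those references: the termwise split fundamental sequences of K\"ahler differentials on a composite of free simplicial resolutions for the scheme-level triangle, the comonadic standard resolution to make everything strictly functorial, and the lisse-\'etale formalism for Artin stacks. Your identification of the non-functoriality of the lisse-\'etale topos as the genuine difficulty, and of Olsson's corrected pullback formalism as its resolution (where, one should add, the triangle for stacks is assembled through truncations in an inverse-limit category rather than by naive gluing of atlases, since $\L_{\cX}$ is unbounded below), is accurate, so the proposal is correct and follows essentially the same route the paper implicitly endorses.
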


The other important property which is helpful in understanding the cotangent complex of a scheme is

\begin{lemma}\cite{Illusie-1971}
Let $X$ be a scheme, then $$H^0(\L_X)=\Omega_X,$$ the cotangent sheaf of $X$. In particular, if $X$ 
is of local complete intersection (or even smooth), we have a quasi-isomorphism $$\L_X \stackrel{\cong}{\lra}
\Omega_X.$$
\end{lemma}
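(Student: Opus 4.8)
The plan is to prove the universal identification $H^0(\L_X) = \Omega_X$ first, and then deduce the quasi-isomorphism in the smooth and lci cases by combining this with the concentration results already recorded in the first lemma of this subsection. Since the assertion is local on $X$ and both sides are (cohomology) sheaves compatible with localization, I would immediately reduce to the affine case $X = \spec A$ over the fixed base and prove $H^0(\L_A) = \Omega_A$ for the ring $A$; gluing over an affine cover then returns the global statement for free.

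For the affine computation I would work directly from the definition of the cotangent complex via a free (polynomial) simplicial resolution $P_\bullet \to A$, so that $\L_A$ is the normalization of the simplicial $A$-module $\Omega_{P_\bullet} \otimes_{P_\bullet} A$. In the cohomological convention of the excerpt (amplitude in $(-\infty,0]$), the top cohomology $H^0$ is the cokernel of the map $\Omega_{P_1} \otimes_{P_1} A \to \Omega_{P_0} \otimes_{P_0} A$ induced by the two face maps $P_1 \rightrightarrows P_0$. The essential input is that the Kähler differential functor is right exact: because $\pi_0(P_\bullet) = A$ and $\Omega$ carries the presentation $P_1 \rightrightarrows P_0 \to A$ to a presentation of $\Omega_A$, the zeroth homotopy $\pi_0(\Omega_{P_\bullet}\otimes_{P_\bullet} A)$ is exactly $\Omega_A$. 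Concretely, the surjection $P_0 \to A$ yields a surjection $\Omega_{P_0}\otimes A \to \Omega_A$ whose kernel is precisely the image of the degree-one face maps, which is what computes $\cok(d_0 - d_1)$. This gives $H^0(\L_A) = \Omega_A$.

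For the ``in particular'' clause I would not reprove concentration of $\L_X$ from scratch, but instead invoke the preceding lemma: its item (1) asserts that for smooth $X$ the complex $\L_X$ is quasi-isomorphic to a single locally free sheaf in degree $0$, and its item (2) asserts that for an lci scheme $\L_X$ is quasi-isomorphic to a single sheaf in degree $0$. Once $\L_X$ is known to be concentrated in degree $0$, it is quasi-isomorphic to its own $H^0$, which by the first part is $\Omega_X$; hence $\L_X \stackrel{\cong}{\lra} \Omega_X$ in both cases. (Alternatively, in the smooth case one may take the constant resolution $P_\bullet = A$, so that $\L_A = \Omega_A$ tautologically, but citing the earlier lemma is cleaner and handles the lci case uniformly.)

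The main obstacle is the affine identification $H^0(\L_A) = \Omega_A$: everything rests on making the right-exactness argument precise, that is, on verifying that passing to the zeroth cohomology of the normalized complex of $\Omega_{P_\bullet}\otimes_{P_\bullet} A$ genuinely computes the cokernel presenting $\Omega_A$, and doing so independently of the chosen resolution. I would settle the independence of $P_\bullet$ by the standard homotopy-invariance of the cotangent complex, and I expect no further difficulty thereafter, since the remaining steps — compatibility with localization, gluing over an affine cover, and the appeal to the concentration results — are formal.
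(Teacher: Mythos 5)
Your proof is correct and coincides in substance with the paper's: the paper proves the identification $H^0(\L_X)=\Omega_X$ by simply citing \cite[Proposition 1.2.4.2]{Illusie-1971}, and the standard proof of that cited proposition is precisely the argument you give (reduce to $X=\spec A$, resolve by a simplicial polynomial algebra $P_\bullet\to A$, and use right-exactness of the K\"ahler differential functor to identify $\pi_0(\Omega_{P_\bullet}\otimes_{P_\bullet}A)$ with $\Omega_A$). The ``in particular'' clause is likewise intended, exactly as in your write-up, as an immediate consequence of the concentration assertions (1) and (2) of the preceding lemma, so invoking them rather than reproving concentration is the right call.
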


\begin{proof}
See \cite[Proposition 1.2.4.2]{Illusie-1971}.
\end{proof}

The reason why cotangent complexes are important is that they play a central role in deformation theory, which is the
whole essence of \cite{Illusie-1971,Illusie-1972}. For our purpose, we need two properties 
of cotangent complexes concerning deformation theory. 

Let $X$ be any scheme, and $I$ be any coherent $\cO_X$-module. We use the notion 
$$X[I]=\spec_X(\cO_X\oplus I)$$
for the trivial square zero extension of $X$ by $I$. In other words, $X[I]$ can be viewed as a first order 
deformation of $X$, and we denote the corresponding natural inclusion by $\iota_I: X \lra X[I]$. 
A retract of $X[I]$ is defined to be a morphism $r: X[I] \lra X$ such that
the composition $r\circ \iota_I=\textrm{id}_X$.

\begin{proposition}\cite{Illusie-1971}\label{_cotangent_retracts_}
Under the above notations, all retracts from $X[I]$ to $X$ are parametrized by $\hom(\L_X, I)$. Or in other words, 
the automorphism group of $X[I]$ is given by $\hom(\L_X, I)$. 
\end{proposition}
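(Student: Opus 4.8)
The plan is to reduce the statement to an elementary computation on structure sheaves and then to reinterpret the answer through the cotangent complex. First I would unwind the definitions: the trivial square zero extension $X[I]$ has structure sheaf $\cO_X\oplus I$ with $I$ a square zero ideal, and the inclusion $\iota_I$ is dual to the augmentation $\cO_X\oplus I \lra \cO_X$, $(a,m)\mapsto a$. A retract $r\colon X[I]\lra X$ with $r\circ\iota_I=\mathrm{id}_X$ is then exactly a section $s\colon \cO_X\lra \cO_X\oplus I$ of this augmentation in the category of sheaves of (base) algebras. Such a section is forced to have the shape $s(a)=(a,D(a))$ for a unique additive $D\colon\cO_X\lra I$, and a direct check of the multiplicativity of $s$ shows that $s$ is a ring homomorphism if and only if $D$ obeys the Leibniz rule $D(ab)=aD(b)+bD(a)$, i.e. if and only if $D$ is a derivation. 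Since all of this is local and glues, retracts of $X[I]$ are in natural bijection with $\der(\cO_X,I)$.

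The second step is to recast this group of derivations in terms of $\L_X$. The universal property of Kähler differentials gives $\der(\cO_X,I)\cong\hom(\Omega_X,I)$, and the lemma that $H^0(\L_X)=\Omega_X$, combined with the fact that $\L_X$ has amplitude in $(-\infty,0]$ while $I$ sits in degree $0$, shows that every morphism in the derived category $\L_X\lra I$ factors uniquely through the top cohomology $H^0(\L_X)$. Concretely, using the truncation triangle $\tau_{\leq-1}\L_X\lra\L_X\lra H^0(\L_X)$ and applying $\hom_{D^b(X)}(-,I)$, one checks that $\hom_{D^b(X)}(\tau_{\leq-1}\L_X,I)$ and $\hom_{D^b(X)}(\tau_{\leq-1}\L_X,I[-1])$ both vanish, since a complex concentrated in degrees $\leq-1$ admits no nonzero maps to a sheaf placed in degree $0$ or $1$. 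This yields $\hom(\Omega_X,I)\cong\hom_{D^b(X)}(\L_X,I)$, abbreviated $\hom(\L_X,I)$, so retracts are parametrized by $\hom(\L_X,I)$ as asserted. I expect this passage from the naive $\hom(\Omega_X,I)$ to the derived $\hom(\L_X,I)$ to be the only genuinely homological point of the argument: one must be certain that the lower cohomology sheaves of $\L_X$ contribute nothing, which is exactly what the amplitude of $\L_X$ together with the concentration of $I$ in degree $0$ guarantees.

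Finally, for the reformulation in terms of automorphisms I would run the same computation one level higher on $\cO_X\oplus I$. An automorphism of $X[I]$ compatible with the projection to $X$ and restricting to the identity on $I$ corresponds to a ring automorphism of the form $(a,m)\mapsto(a,m+D(a))$, and the homomorphism condition again forces $D$ to be a derivation; composition of two such automorphisms corresponds to the sum of the associated derivations, so $\aut(X[I])\cong\der(\cO_X,I)\cong\hom(\L_X,I)$ as groups. This also reconciles the two formulations: precomposing a retract $s_D$ with the automorphism attached to $D'$ sends $D\mapsto D+D'$, so $\aut(X[I])$ acts simply transitively on the set of retracts with the trivial retract $s(a)=(a,0)$ as base point, and the two parametrizations agree. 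The whole argument is formal once the derived-category identification of the previous paragraph is established, which is why I regard that identification as the heart of the proof.
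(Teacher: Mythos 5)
Your proposal is correct and follows essentially the same route as the paper's proof: identify retracts of $X[I]$ with algebra splittings of $0 \to I \to \cO_X\oplus I \to \cO_X \to 0$, hence with derivations $\der(\cO_X, I) = \hom(\Omega_X, I)$, and then pass to $\hom(\L_X, I)$ via $H^0(\L_X) = \Omega_X$. The only differences are ones of detail — you spell out the truncation-triangle argument that the paper compresses into ``by the above lemmas,'' and you also verify the automorphism-group reformulation, which the paper leaves implicit — but the underlying argument is identical.
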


\begin{proof}
This is part of \cite[Theorem III.2.1.7]{Illusie-1971}. 
Since $X$ is a scheme, by the above lemmas, we have 
$$\hom(\L_X, I) = \hom(\Omega_X, I) = \der(\cO_X, I).$$
However every retract $r: X[I] \lra X$ corresponds to a splitting $\varphi$ of the exact sequence
\begin{displaymath}
\xymatrix{
0 \ar[r] & I \ar[r] & \cO_X\oplus I \ar[r] & \cO_X \ar[r] \ar@/^/[l]^{\varphi} & 0,
}
\end{displaymath}
which is an algebra homomorphism, therefore corresponds to a derivation into $I$. 
\end{proof}

A priori, $X[I]$ may not be the only first order thickening of $X$ by the ideal $I$. The following 
proposition gives a parameter space for all such thickenings. It's an application of the
so-called ``Fundamental Theorem of Cotangent Complex" in \cite{Illusie-1971}.

\begin{proposition}\cite{Illusie-1971}\label{_cotangent_thickenings_}
Under the above notations, all isomorphism classes of first order thickenings of $X$ by the ideal $I$ are 
parametrized by $\ext^1(\L_X, I)$. 
\end{proposition}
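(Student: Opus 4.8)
The plan is to identify first order thickenings with square-zero extensions of the structure sheaf, and then to classify such extensions by the first hyperext group of the cotangent complex, in the spirit of the Fundamental Theorem of the Cotangent Complex. Concretely, a first order thickening of $X$ by $I$ is a scheme $X'$ together with a closed immersion $X \hookrightarrow X'$ whose ideal sheaf $\cI$ satisfies $\cI^2 = 0$ and comes equipped with a fixed isomorphism $\cI \cong I$ of $\cO_X$-modules. Such data is the same as a short exact sequence of $\cO_X$-algebras $0 \lra I \lra \cO_{X'} \lra \cO_X \lra 0$ in which $I$ is a square-zero ideal, and two thickenings are identified when there is an isomorphism of the ambient schemes restricting to the identity on $X$ and on $I$. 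I would write $\exal(X, I)$ for the resulting set of isomorphism classes; it is pointed by the trivial thickening $X[I]$ and carries a natural abelian group structure via Baer sum. The goal is a canonical group isomorphism $\exal(X, I) \cong \ext^1(\L_X, I)$, compatible with Proposition \ref{_cotangent_retracts_}: the automorphism group of a fixed thickening should be $\ext^0(\L_X, I) = \hom(\L_X, I)$, so that the present statement is exactly the ``next degree up'' of that result.

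First I would treat the affine case. Writing $X = \spec A$ and choosing a simplicial resolution $P_\bullet \lra A$ by free $A$-algebras, the cotangent complex $\L_X$ is represented by the associated complex of $\Omega_{P_\bullet} \otimes_{P_\bullet} A$. Given a square-zero extension $0 \lra I \lra B \lra A \lra 0$, the freeness (smoothness) of each $P_n$ lets me lift the map $P_\bullet \lra A$ to $P_\bullet \lra B$; the ambiguity in these lifts, measured degreewise by derivations valued in $I$, assembles into a well-defined class in $\ext^1(\L_X, I)$, independent of the chosen lifts and of the resolution. Conversely, a class in $\ext^1(\L_X, I)$ is represented by a map out of $\tau_{\geq -1}\L_X$, and using $H^0(\L_X) = \Omega_X$ one reconstructs an extension of $A$-algebras realizing it. Checking that these two assignments are mutually inverse and additive is the classical content recorded in \cite[Theorem III.1.2.3]{Illusie-1971}; I would cite it at the affine level rather than reproduce the simplicial bookkeeping.

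The remaining work is to globalize. Since the affine statement is functorial, the local extensions organize into the sheaf $\cext^1(\L_X, I)$ on $X$, while the sheaf of local automorphisms is $\chom(\L_X, I) = \cext^0(\L_X, I)$. To pass from local to global classes I would use the local-to-global spectral sequence
\begin{equation*}
H^p(X, \cext^q(\L_X, I)) \Longrightarrow \ext^{p+q}(\L_X, I),
\end{equation*}
whose low-degree exact sequence relates the global group $\ext^1(\L_X, I)$ to $H^0(X, \cext^1(\L_X, I))$ and $H^1(X, \chom(\L_X, I))$. A global thickening contributes through both terms at once: its local pieces give an element of $H^0(X, \cext^1(\L_X, I))$, while the choices of local trivializations, differing by local automorphisms, produce the gluing class in $H^1(X, \chom(\L_X, I))$.

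The main obstacle I expect is precisely this globalization: one must check that the class attached to a thickening is independent of the chosen affine cover and local resolutions, that isomorphic thickenings produce the same global $\ext^1$-class, and that Baer sum corresponds to addition in $\ext^1$. This amounts to verifying the compatibility of the local bijections with the maps in the spectral sequence, which is where the functoriality of the cotangent complex (Proposition \ref{_cotangent_functorial_}) does the essential work; the purely affine identification from \cite{Illusie-1971} is comparatively formal once it is in hand.
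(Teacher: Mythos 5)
Your proposal is correct in outline, but note that the paper does not really prove this statement: its entire proof is the citation to \cite[Theorem III.1.2.3, III.1.2.7]{Illusie-1971}, where the classification of square-zero extensions by $\ext^1(\L_X,I)$ is established directly in the setting of ringed topoi, so the global statement for schemes comes out of the framework with no separate gluing step. Your route has a genuinely different architecture: you invoke Illusie only at the affine level (identifying $\exal(A,I)$ with $\ext^1(\L_A,I)$ via simplicial free resolutions and torsors of liftings), and then globalize by hand through the local-to-global spectral sequence $H^p(X,\cext^q(\L_X,I)) \Rightarrow \ext^{p+q}(\L_X,I)$, matching its low-degree exact sequence against the filtration of $\exal(X,I)$ by locally trivial thickenings. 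What your approach buys is a concrete, Čech-style picture of where a thickening's class lives, at the price of exactly the compatibility checks you list (independence of cover and resolutions, additivity under Baer sum); what Illusie's formulation buys is that those checks are absorbed into the functoriality of the cotangent complex on the ringed topos, which is why the paper can afford a one-line proof. One imprecision you should repair: a single thickening does not contribute ``through both terms at once.'' Its image in $H^0(X,\cext^1(\L_X,I))$ is precisely the obstruction to local triviality; only when that image vanishes does the thickening admit local trivializations and hence define a class in $H^1(X,\chom(\L_X,I))$. The comparison map $\exal(X,I)\to\ext^1(\L_X,I)$ is therefore assembled by a diagram chase (five-lemma style) between the two exact sequences, not by summing two independent contributions; with that correction your plan goes through.
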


\begin{proof}
See \cite[Theorem III.1.2.3, III.1.2.7]{Illusie-1971}.
\end{proof}

The class corresponding the a particular thickening of $X$ is the so-called \textit{Kodaira-Spencer class}.
Obviously, the zero element in $\ext^1(\L_X, I)$ represents the trivial thickening $X[I]$. 

So much general theory of cotangent complexes. Now we want to show that, the GIT-semistable locus of Quot scheme, 
which is, by abuse of notation, denoted by $Q$, 
although is 
in general singular due to the existence of strictly semistable sheaves, 
in fact has a simple cotangent complex, namely, quasi-isomorphic to a single sheaf concentrated 
in degree 0. 

\begin{lemma}\label{_quot_lci_}
The GIT-semistable locus of the Quot scheme $Q$ in the GIT construction of the moduli space of semistable sheaves 
on a K3 surface is a local complete intersection. In particular, the cotangent complex $\L_Q$ has perfect amplitude
in $[-1,0]$, and is quasi-isomorphic to the cotangent sheaf $\Omega_Q$. 
\end{lemma}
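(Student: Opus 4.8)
The plan is to exhibit $Q$ locally as the zero locus of a regular sequence inside a smooth ambient scheme, using the deformation theory of the Quot functor together with a vanishing special to the K3 situation. Since $Q$ is a scheme, the first Lemma of this subsection already gives that $\L_Q$ has perfect amplitude in $(-\infty,0]$; the content of the statement is to improve this to $[-1,0]$, which is equivalent to $Q$ being a local complete intersection. Recall Grothendieck's deformation theory of the Quot scheme: at a closed point $[\cE\to\cF]\in Q$ with kernel $\cK$, the Zariski tangent space is $\hom(\cK,\cF)$ and the obstructions to extending infinitesimal deformations lie in $\ext^1(\cK,\cF)$, so that near $[\cF]$ the scheme $Q$ is cut out in a smooth germ of dimension $\dim\hom(\cK,\cF)$ by a Kuranishi map landing in $\ext^1(\cK,\cF)$. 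The lci property will follow once these defining equations are shown to form a regular sequence, and the key input for that is the vanishing of the next obstruction group $\ext^2(\cK,\cF)$.

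The heart of the argument is therefore the vanishing $\ext^2_X(\cK,\cF)=0$, which I would establish at every point of $Q$. To see it, apply $\rhom(-,\cF)$ to the universal quotient sequence \eqref{_universal_quotient_} restricted to the fibre, giving the long exact sequence
\[
\cdots\lra\ext^2_X(\cE,\cF)\lra\ext^2_X(\cK,\cF)\lra\ext^3_X(\cF,\cF)\lra\cdots.
\]
Now $\cE$ is locally free and, being a large negative twist $\cO_X(-m)^{\oplus N}$ of a trivial bundle, satisfies $\ext^i_X(\cE,\cF)=H^i(X,\cF(m))^{\oplus N}=0$ for all $i>0$ by Serre vanishing, where $m\gg0$ is the fixed twist used in the GIT construction. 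On the other hand, since $X$ is a K3 surface its canonical bundle is trivial, so Serre duality gives $\ext^3_X(\cF,\cF)\cong\ext^{-1}_X(\cF,\cF)^\vee=0$. Feeding both vanishings into the sequence forces $\ext^2_X(\cK,\cF)=0$.

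With the vanishing in hand I would conclude as follows. The groups $\hom(\cK,\cF)$ and $\ext^1(\cK,\cF)$ may both jump at strictly semistable points, but their difference equals $\chi(\cK,\cF)-\ext^2(\cK,\cF)=\chi(\cK,\cF)$, which is locally constant because the Hilbert polynomials are fixed; hence the expected dimension $\dim\hom(\cK,\cF)-\dim\ext^1(\cK,\cF)$ is constant along $Q$. The vanishing $\ext^2(\cK,\cF)=0$ says there are no higher obstructions, so the Kuranishi map realizes $Q$ near $[\cF]$ as the zero scheme of a section of a rank $\ext^1(\cK,\cF)$ bundle over a smooth base of dimension $\hom(\cK,\cF)$ whose components form a regular sequence; equivalently, the conormal sheaf of $Q$ in a smooth ambient scheme is locally free of the expected rank. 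Thus $Q$ is a local complete intersection at every point, and the ``in particular'' clause is immediate from the cited properties of the cotangent complex: a local complete intersection has $\L_Q$ of perfect amplitude in $[-1,0]$ and quasi-isomorphic to $\Omega_Q$.

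The main obstacle is this last step: passing rigorously from the pointwise vanishing $\ext^2(\cK,\cF)=0$ to the global regular-sequence (equivalently, locally free conormal) statement. Because $\hom(\cK,\cF)$ and $\ext^1(\cK,\cF)$ genuinely jump at the strictly semistable points where $Q$ is singular, one cannot work with a single global bundle; instead one must either globalize the Kuranishi construction through the relative $\cext$-sheaves on $Q\times X$ and verify that $Q$ meets its smooth ambient space in the expected codimension, or invoke the formal local model and check directly that the obstruction map cuts out a complete intersection. Controlling this behaviour uniformly across the semistable locus, where the two jumping cohomology groups conspire to keep only their difference constant, is the delicate point of the proof.
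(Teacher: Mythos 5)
Your vanishing $\ext^2_X(K,F)=0$ is correct, and it does appear in the paper's proof --- but only to show that the \emph{expected} dimension is a constant, not for the purpose you assign to it. For the Quot functor the obstruction space is $\ext^1(K,F)$ (or rather its traceless part); there is no sense in which $\ext^2(K,F)$ is a ``next obstruction group'' whose vanishing forces the Kuranishi equations to form a regular sequence. Regularity of those equations is \emph{equivalent} to the actual dimension of $Q$ equaling the expected one, so it has to be established by a dimension count, not deduced from a higher Ext vanishing. Worse, with the full obstruction space your claim is actually false: at a point whose quotient $F$ is stable, the long exact sequence (using $\ext^i(E,F)=0$ for $i>0$) and Serre duality on the K3 give $\ext^1(K,F)\cong\ext^2(F,F)\cong\hom(F,F)^\vee=\C$, while $Q$ is smooth there of dimension $\hom(K,F)=\chi(K,F)+1$; if $Q$ were cut out near such a point by a regular sequence of length $\dim\ext^1(K,F)$ inside a smooth germ of dimension $\hom(K,F)$, its dimension would be $\chi(K,F)$. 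So the trace component of $\ext^1(K,F)$ is always a redundant equation, and the correct obstruction space is the traceless part $\ext^1(K,F)_0=\ker\bigl(\ext^1(K,F)\lra\ext^2(F,F)\stackrel{tr}{\lra}H^2(\cO_X)\bigr)$. This is exactly the criterion the paper quotes from \cite[Proposition 2.2.8]{Huybrechts-2010}: $\hom(K,F)\geq\dim_qQ\geq\hom(K,F)-\dim\ext^1(K,F)_0$, with $Q$ lci if and only if the right-hand equality holds at every closed point.

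The step you flag as ``delicate'' and leave open is precisely where the paper's proof lives, and it requires two global inputs you never invoke. First, $Q$ is irreducible by Kaledin--Lehn--Sorger (\cite[Theorem 4.4]{Kaledin-2006}), so $\dim_qQ$ is a single constant; and by your vanishing $\ext^{i}(K,F)=0$ for $i\geq 2$, the expected dimension $\hom(K,F)-\dim\ext^1(K,F)_0=\chi(K,F)+1$ is a topological constant as well. Second, by the standing assumption there is at least one stable point, and there $\ext^1(K,F)_0=0$, so the upper and lower bounds in the criterion coincide and equality holds at that point; since both sides are constant over the irreducible $Q$, equality then holds everywhere, and $Q$ is lci. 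No purely local argument of the kind you attempt can close this gap, because lci-ness here is genuinely a statement about the global dimension of $Q$, controlled by irreducibility together with the computation at a stable point.
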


\begin{proof}
By \cite[Proposition 2.2.8]{Huybrechts-2010}, 
at every closed point 
$q \in Q$, which is by the above notation represented by a quotient $$0 \lra K \lra E \lra F \lra 0,$$ 
there is an inequality concerning the dimension of $Q$ at the point
$$\dim\hom(K,F) \geq \dim_qQ \geq \dim\hom(K,F) - \dim\ext^1(K,F)_0,$$ where
$\ext^1(K,F)_0$ is the kernel of the composition map $$\ext^1(K,F) \stackrel{\cong}{\lra} \ext^2(F,F) \stackrel {tr}{\lra}H^2(\cO_X) = \C.$$
And the Quot scheme $Q$ is a local complete intersection if and only if the second equality holds at every closed point $q\in Q$. 

By \cite[Theorem 4.4]{Kaledin-2006} (see also \cite[Theorem 3.18]{Yoshioka-2003}), we know that the GIT-semistable locus of the Quot scheme $Q$
is irreducible. Therefore, 
$\dim_qQ$ is constant on the only connected component of $Q$. Furthermore, it's easy 
to check that for every $i\geq 2$, we have $$\ext^i(K,F)=0, $$ which implies $$\dim\hom(K,F) - \dim\ext^1(K,F)_0=\chi(K,F)+1$$ which is a topological 
number only depending on the Chern classes of $F$, hence is also constant. Therefore it suffices to check the equality of both sides at one 
closed point of $Q$. 

However by the assumption, there exists at least one point in the Quot scheme $Q$ which is represented by a stable quotient sheaf $F$. At such
a point the obstruction space $\ext^1(K,F)_0$ vanishes, therefore both equalities hold at the same time. By the above discussion we conclude that
the Quot scheme $Q$ is a local complete intersection. 
\end{proof}

\subsection{Classical Atiyah Classes}

In \cite{Illusie-1971}, the Atiyah class  were defined in two different ways. We follow the second approach using 
the exact sequence of principal parts, which itself was defined in \cite[III.1.2.6]{Illusie-1971}. 

Let $A\lra B$ be a ring homomorphism, then we have an exact sequence
$$0 \lra I \lra B\otimes_A B \lra B \lra 0$$ which splits by either of the ring homomorphisms
$$j_1, j_2: B\lra B\otimes_A B$$ where $$j_1(x)=x\otimes 1, \ \ j_2(x)=1\otimes x.$$ After dividing by $I^2$ we obtained
$$0 \lra I/I^2 \lra B\otimes_A B/I^2 \lra B \lra 0$$ which we denoted by $$0 \lra \Omega_{B/A} \lra P^1_{B/A} \lra B \lra 0.$$
Note that $P^1_{B/A}$ is a $B$-$B$-bimodule. 
Let $M$ be a $B$-module, we tensor the above exact sequence by $M$ from right side and obtain
$$0 \lra \Omega_{B/A}\otimes_B M \lra P^1_{B/A}\otimes_B M \lra M \lra 0,$$ which define a class in $\ext^1_B(M, M\otimes \Omega_{B/A})$. 
This class is called the \textit{Atiyah class of $M$}. 

In our settings, we let $A$ be $\cO_X$ and $B$ be $\cO_{Q\times X}$. For any sheaf $\cF$ on
$Q\times X$, we obtained the Atiyah class of $\cF$, which we denote by $At(\cF)$. Note that, a priori, what we defined above is only
a truncation of the full Atiyah class. For the definition of the full Atiyah class, we need to replace $B$ by a simplicial resolution in the sequence 
of principal parts. However, because of Lemma \ref{_quot_lci_}, there's no difference in this case. Note that
$$\Omega_{Q\times X/X}=\L_{Q\times X/X}=\pi^*\L_Q,$$ so we actually have defined the (full) Atiyah class 
$$At(\cF)\in\ext^1_{Q\times X}(\cF, \cF\otimes\pi^*\L_Q).$$ 

Now we study the deformation properties of the Atiyah class. We have already seen from Lemma \ref{_cotangent_retracts_}
that, for any coherent sheaf $I$ on $Q$, the space $\hom_Q(\L_Q, I)$ parametrizes all retracts $\iota: Q[I]\lra Q$. On the other hand, 
from classical sheaf deformation theory, we also know that $\ext^1_{Q\times X}(\cF, \cF \otimes \pi^*I)$ parametrizes all flat deformations
of $\cF$ from $Q$ to $Q[I]$ (see for example \cite[Lemma 3.4]{Thomas-2000}). The Atiyah class gives gives a nice relation of the
two spaces as follows:

\begin{proposition}
Let $I$ be any coherent $\cO_Q$-module and $\cF$ be a coherent sheaf of $\cO_{Q\times X}$-module which is flat over $Q$. 
Let $At(\cF)$ be its Atiyah class defined as above. Then the map
$$At(\cF)\cup (\cF\otimes - ) : \hom_Q(\L_Q, I) \lra \ext^1_{Q\times X}(\cF, \cF\otimes \pi^*I)$$ given by precomposing with the Atiyah class of $\cF$
can be interpreted as 
$$\{\textrm{retracts of }\iota_I\} \lra \{\textrm{flat deformations of the }\cF\textrm{ over }Q[I]\},$$
where the arrow is given by pulling back the sheaf $\cF$ via the chosen retract. 
\end{proposition}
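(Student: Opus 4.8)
The plan is to verify the claimed correspondence by reducing it to a single universal deformation, the one built into the sequence of principal parts. The two dictionaries are already available: retracts of $\iota_I$ are classified by $\hom_Q(\L_Q, I)$ by Proposition~\ref{_cotangent_retracts_}, and flat deformations of $\cF$ over $Q[I]$ are classified by $\ext^1_{Q\times X}(\cF, \cF\otimes\pi^*I)$. Thus the content of the statement is that, under these two identifications, the map $At(\cF)\cup(\cF\otimes -)$ is computed by the geometric operation of pulling $\cF$ back along the retract.

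First I would make the pullback precise. A retract $r\colon Q[I]\to Q$ induces $r\times\mathrm{id}_X\colon Q[I]\times X\to Q\times X$ sitting in a Cartesian square over $r\colon Q[I]\to Q$, so that $(r\times\mathrm{id}_X)^*\cF$ is flat over $Q[I]$ because $\cF$ is flat over $Q$; and since $r\circ\iota_I=\mathrm{id}_Q$ it restricts to $\cF$ along $\iota_I\times\mathrm{id}_X$. Hence it is a genuine flat deformation of $\cF$ over $Q[I]$. As a first sanity check, the trivial retract, corresponding to $0\in\hom_Q(\L_Q,I)$, produces the trivial deformation, in agreement with the vanishing of the cup product at $0$.

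Next I would use functoriality to reduce to one universal case. By Lemma~\ref{_quot_lci_} we may take $I=\Omega_Q\cong\L_Q$ and single out the tautological element $\mathrm{id}\in\hom_Q(\L_Q,\L_Q)$, which corresponds to a universal retract $r_{\mathrm{univ}}\colon Q[\L_Q]\to Q$. An arbitrary $\phi\in\hom_Q(\L_Q,I)$ then factors its retract as $r_\phi=r_{\mathrm{univ}}\circ\Phi$, where $\Phi\colon Q[I]\to Q[\L_Q]$ is the morphism of trivial extensions induced by $\phi$; this is immediate from the derivation description of retracts in the proof of Proposition~\ref{_cotangent_retracts_}. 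Consequently $(r_\phi\times\mathrm{id}_X)^*\cF=(\Phi\times\mathrm{id}_X)^*(r_{\mathrm{univ}}\times\mathrm{id}_X)^*\cF$, and under the deformation dictionary this base change along $\Phi\times\mathrm{id}_X$ is exactly the push-out of extension classes along $\mathrm{id}_\cF\otimes\pi^*\phi$. So once the universal case is settled, the general identity
\[
At(\cF)\cup(\cF\otimes\phi)\;=\;(\mathrm{id}_\cF\otimes\pi^*\phi)_*\,At(\cF)\;=\;\big[(r_\phi\times\mathrm{id}_X)^*\cF\big]
\]
follows formally.

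\textbf{The main obstacle} is the universal case, that is, showing the deformation class of $(r_{\mathrm{univ}}\times\mathrm{id}_X)^*\cF$ equals $At(\cF)$ itself. To prove this I would unwind $r_{\mathrm{univ}}$. The trivial extension $(Q\times X)[\pi^*\Omega_Q]=\spec_{Q\times X}(\cO_{Q\times X}\oplus\pi^*\Omega_Q)$ is, using $\Omega_{Q\times X/X}=\pi^*\L_Q$, canonically $\spec_{Q\times X}P^1_{Q\times X/X}$ with $j_1$ as its structure morphism; and a direct check identifies $r_{\mathrm{univ}}\times\mathrm{id}_X$ with the second section $j_2$. Pulling $\cF$ back along it therefore produces the middle term $\cF\otimes_{\cO_{Q\times X}}P^1_{Q\times X/X}$ of the principal-parts extension
\[
0 \lra \cF\otimes\pi^*\L_Q \lra \cF\otimes P^1_{Q\times X/X} \lra \cF \lra 0
\]
that defines $At(\cF)$. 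The genuine work is the bimodule bookkeeping on $P^1_{Q\times X/X}$: the $j_2$-structure records the deformation, i.e.\ the $\cO_{(Q\times X)[\pi^*\Omega_Q]}$-module structure coming from the pullback, whereas the $j_1$-structure is what exhibits the displayed sequence as an extension of $\cO_{Q\times X}$-modules with class $At(\cF)$. Reconciling the two is precisely the Leibniz-rule computation underlying the Atiyah class, and carrying it out identifies the deformation class of the universal pullback with $At(\cF)$, which completes the proof.
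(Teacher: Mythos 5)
Your proposal is correct and follows essentially the same route as the paper: the heart in both cases is the identification of the structure sheaf of the trivial thickening with $P^1_{Q\times X/X}$, where $j_1$ gives the $\cO_{Q\times X}$-module structure exhibiting the extension and $j_2$ the (universal) retract, so that pulling back $\cF$ yields exactly the principal-parts sequence defining $At(\cF)$. The only organizational difference is that you factor a general retract through the universal one $Q[\Omega_Q]\to Q$ and then push out along $\phi$, whereas the paper treats a general $u\in\hom_Q(\L_Q,I)$ in a single two-row diagram whose bottom row is simultaneously the pushout of the principal-parts sequence and the pullback of $\cF$ along the chosen retract.
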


The proof of the proposition is very straightforward and is just a matter of unwinding the definitions. However, to the best 
of my knowledge, it doesn't seem to appear anywhere in this form. So we include a proof here. 

\begin{proof}
From Lemma \ref{_cotangent_retracts_}, we actually know that, for any $$u\in\hom_Q(\L_Q, I)=\hom_Q(\Omega_Q, I),$$
the corresponding retract is given by the splitting of the second row in the following diagram via the arrow $(\textrm{id}, u\circ d_Q)$:
\begin{equation*}
\xymatrix{
0 \ar[r] & \Omega_Q \ar[r] \ar^u[d] & \cO_Q\oplus\Omega_Q \ar[r] \ar[d] & \cO_Q \ar[r] \ar@{=}[d] \ar@/_/_{(\textrm{id}, d_Q)}[l] & 0\\
0 \ar[r] & I \ar[r] & \cO_Q\oplus I \ar[r] & \cO_Q \ar[r] \ar@/^/^{(\textrm{id}, u\circ d_Q)}[l] & 0,
}
\end{equation*}
where $d_Q$ is the universal derivation defined by $$d_Q(x)=1\otimes x-x\otimes 1$$ for every $x\in\cO_Q$. 

Note that $\cO_Q\oplus\Omega_Q$ is exactly the principal part $P^1_Q$ together with the left $\cO_Q$-module structure, 
while the splitting $(\textit{id}, d_Q)$ is exactly the right $\cO_Q$-module structure. Similarly, $\cO_Q\oplus I$ can also be
identified with $\cO_{Q[I]}$ such that the splitting $(\textrm{id}, u\circ d_Q)$ correspond exactly to the retract. 

We pullback the diagram to $Q\times X$ and tensor every term with the sheaf $\cF$, using the splittings of both rows, then we get
\begin{equation*}
\xymatrix{
0 \ar[r] & \pi^*\Omega_Q\otimes\cF \ar[r] \ar[d] & P^1_{Q\times X/X}\otimes\cF \ar[r] \ar[d] & \cF \ar[r] \ar@{=}[d] & 0\\
0 \ar[r] & \pi^*I\otimes\cF \ar[r] & \pi^*Q[I]\otimes\cF \ar[r] & \cF \ar[r] & 0,
}
\end{equation*}

From the above construction we see exactly that the second row is the class $At(\cF)\cup (\cF\otimes u)$, which finishes the proof.
\end{proof}

The above proposition concerns the relation of the Atiyah class with deformations of $\cF$ over the trivial first order deformation $Q[I]$
of $Q$. Next proposition relates the Atiyah class with obstructions of deforming $\cF$ to an arbitrary first order deformation of $Q$. 
More precisely, we have seen from Lemma \ref{_cotangent_thickenings_} that $\ext^1(\L_Q, I)$ parametrizes all isomorphism
classes of first order thickenings of $Q$ by the ideal sheaf $I$. And classical deformation theory tells us that the obstruction of 
deforming $\cF$ to any first order extension of the base lies in the space $\ext^2(\cF, \cF\otimes\pi^*I)$
(see for example \cite[Proposition 3.13]{Thomas-2000}). Via the Atiyah class, we
can make a precise formation of their relation:

\begin{proposition}\cite{Illusie-1971}
Let $I$ be any coherent $\cO_Q$-module and $\cF$ be a coherent sheaf of $\cO_{Q\times X}$-module which is flat over $Q$. 
Let $At(\cF)$ be its Atiyah class defined as above. The map
$$At(\cF) \cup (\cF\otimes - ) : \ext^1_Q(\L_Q, I) \lra \ext^2_{Q\times X}(\cF,\cF\otimes\pi^*I)$$
given by precomposing with the Atiyah class can be interpreted as 
\begin{multline*}
\{\textrm{thickenings }Q' \textrm{ of }Q \textrm{ by }\iota_I\} \lra 
\{\textrm{obstructions to the existence }\\
\textrm{of flat deformations of } \cF \textrm{ over }Q'\}.
\end{multline*}
\end{proposition}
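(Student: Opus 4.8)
The plan is to read the statement as the ``degree one higher'' companion of the preceding proposition: there the Atiyah class translated retracts into genuine deformations of $\cF$ over the trivial thickening $Q[I]$, and here it should translate the class of an arbitrary thickening into the obstruction to deforming $\cF$. Both are manifestations of Illusie's principle that the relative Atiyah class $At(\cF)\in\ext^1_{Q\times X}(\cF,\cF\otimes\pi^*\L_Q)$ controls the whole deformation-obstruction theory of the $Q$-flat sheaf $\cF$.

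First I would fix the identifications on both ends. On the source, Proposition \ref{_cotangent_thickenings_} lets me replace a first order thickening $Q'$ by its Kodaira-Spencer class $e\in\ext^1_Q(\L_Q,I)=\ext^1_Q(\Omega_Q,I)$, where I have used $\L_Q\cong\Omega_Q$ from Lemma \ref{_quot_lci_}; concretely $e$ is the class of the square-zero algebra extension $0\to I\to\cO_{Q'}\to\cO_Q\to 0$, extracted through the universal derivation $d_Q$ exactly as in the preceding proof. On the target, classical sheaf theory (\cite[Proposition 3.13]{Thomas-2000}) supplies the obstruction $o(Q')\in\ext^2_{Q\times X}(\cF,\cF\otimes\pi^*I)$ whose vanishing is equivalent to the existence of a $Q'$-flat lift of $\cF$. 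The assertion to be proved is then the single identity $o(Q')=At(\cF)\cup(\cF\otimes e)$.

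Next I would make the right-hand side explicit as a composition in the derived category. Representing $At(\cF)$ by the pulled-back and $\cF$-tensored sequence of principal parts $0\to\cF\otimes\pi^*\Omega_Q\to P^1_{Q\times X/X}\otimes\cF\to\cF\to 0$, and viewing $e$ after pullback as a map $\pi^*\Omega_Q\to\pi^*I[1]$, the cup product $At(\cF)\cup(\cF\otimes e)$ is the composite
\[
\cF \xrightarrow{\,At(\cF)\,} (\cF\otimes\pi^*\Omega_Q)[1] \xrightarrow{\,(\cF\otimes\pi^*e)[1]\,} (\cF\otimes\pi^*I)[2].
\]
I would then match this composite with $o(Q')$ by unwinding Illusie's construction of the obstruction from \cite[Chapter IV]{Illusie-1971}, specialized to the morphism $Q\times X\to X$: the obstruction is built from local lifts of $\cF$ together with the comparison data between them, and each such comparison is governed by the Atiyah class precisely as in the preceding proposition, so that the connecting homomorphism producing $o(Q')$ is exactly post-composition of $e$ with $At(\cF)$.

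The hard part is this last identification. Unlike the preceding proposition, where the thickening was the trivial extension $Q[I]$ and a single global retract sufficed, a general thickening need not be locally trivial over $Q$ --- its local Kodaira-Spencer class in $H^0(\cext^1_Q(\Omega_Q,I))$ may be non-zero where $Q$ is singular --- so local lifts of $\cF$ need not exist and a naive \v{C}ech-gluing argument is unavailable. I therefore expect the delicate point to be conducting the comparison at the level of the (truncated) cotangent complex rather than on opens: one must check that the abstract obstruction cocycle coming from Illusie's fundamental theorem coincides with the Yoneda splice of the principal parts sequence against the algebra extension defining $Q'$, keeping the signs and the derived tensor product $\cF\otimes\pi^*(-)$ under control. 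Once this compatibility is in place, functoriality of the cup product closes the argument.
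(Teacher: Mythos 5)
Your proposal is correct and takes essentially the same route as the paper: the paper's entire proof is an appeal to Illusie's principle that ``the composition of the Atiyah class and the Kodaira--Spencer class is the obstruction,'' citing \cite[Proposition IV.3.1.8]{Illusie-1971}, which is precisely the identity $o(Q')=At(\cF)\cup(\cF\otimes e)$ you reduce the statement to. Your further unwinding of the cup product and your caveat that a general thickening of the (singular, lci) scheme $Q$ need not be locally trivial---so the comparison must be carried out at the level of the cotangent complex rather than by na\"ive \v{C}ech gluing---is an accurate sketch of what Illusie's Chapter IV machinery is doing, and is exactly the content the paper delegates to that citation.
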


\begin{proof}
This proposition is under the general principle of ``the composition of Atiyah class and Kodaira-Spencer class is the obstruction". 
The proof can be found in \cite[Proposition IV.3.1.8]{Illusie-1971}.
\end{proof}

\subsection{Reduced Atiyah Classes}\label{_section_reduced_atiyah_}

Now we turn to the reduced Atiyah class, which was defined and extensively studied in \cite{Gillam-2011}. Two definitions
were given, one using graded cotangent complex, the other using more classical language. We follow the second approach 
in \cite{Gillam-2011} and give a brief definition of the reduced Atiyah class in our context, under the additional property 
that $Q$ is a local complete intersection, just to avoid any simplicial resolution of algebras. 

Recall that we have the short exact sequence of sheaves on $Q\times X$ given by \ref{_universal_quotient_}, where 
$$\cE=\pi^*_XE_0$$ is a trivial family of vector bundles over $Q$. We have the following commutative diagram
with all rows and columns exact: 

\begin{equation*}
\xymatrix{
 & 0 \ar[d] & 0 \ar[d] & 0 \ar[d] & \\
0 \ar[r] & \cK\otimes\pi^*\Omega_Q \ar[d] \ar[r] & \cE\otimes\pi^*\Omega_Q \ar[d] \ar[r] & \cE\otimes\pi^*\Omega_Q \ar[d] \ar[r] & 0\\
0 \ar[r] & P^1(\cK) \ar[d] \ar[r] & P^1(\cE) \ar[d] \ar[r] & P^1(\cF) \ar[d] \ar[r] & 0\\
0 \ar[r] & \cK \ar[d] \ar[r] & \cE \ar[d] \ar[r] \ar@/_/_{\sigma}[u] & \cF \ar[d] \ar[r] & 0\\
 & 0 & 0 & 0. & 
}
\end{equation*}

The exactness of the three columns are trivial, because they are all exact sequence of principal parts. The exactness of 
the third row is part of given data. The exactness of the first row is by the flatness of $\cF$ and exactness of the middle row
comes from that of the other two rows. 

As observed in \cite{Gillam-2011}, the middle column naturally splits, due to the fact that $\cE$ is a trivial family over $Q$. 
The reversed arrow $\sigma$ in the above diagram is chosen as follows: we have $$\cE=\cO_Q\otimes_{\C}E_0$$ and
$$P^1(\cE)=(\cO_{Q\times Q}/{I^2})\otimes_{\C}E_0=(\cO_Q\otimes_{\C}\cO_Q/I^2)\otimes_{\C}E_0,$$
where $I$ is the ideal sheaf of the diagonal in $Q\times Q$. 
Then we define
\begin{eqnarray*}
\sigma:\quad\quad \cE &\lra& P^1(\cE) \\
a\otimes e &\longmapsto& a\otimes 1\otimes e. 
\end{eqnarray*}
It's obvious that $\sigma$ is indeed a splitting. 

After all of the preparation, we define the reduced Atiyah class $at\in\hom(\cK, \cF\otimes\pi^*\L_Q)$ as the composition
of the following arrows from the above diagram, starting from $\cK$: 
\begin{equation}\label{_reduced_construction_}
\xymatrix{
 & & & F\otimes\pi^*\L_Q \ar[d]\\
 & & P^1(\cE) \ar[r] & P^1(\cF)\\
 & \cK \ar[r] & \cE \ar_{\sigma}[u], & 
}
\end{equation}
where the downward arrow in the right column means that, everything in $P^1(\cF)$ which comes from $\cK$ via 
the composition of the other three arrows is in the image of this downward arrow, therefore can be uniquely lifted
to $\cF\otimes\pi^*\L_Q$. The reason is that if we maps it further down to $\cF$ as in the above diagram, we get
the zero section. Hence the composition of the four arrows is well-defined. 

The reduced Atiyah class behaves compatibly with the classical Atiyah class. In fact, it is a lifting of the classical Atiyah class, 
as we can see from next property, 

\begin{proposition}\label{_atiyah_compatibility_}
Let $at\in \hom(\cK,\cF\otimes \pi^*\L_Q)$ be the reduced Atiyah class, and $e\in \ext^1(\cF,\cK)$ be the extension class represented 
by the universal quotient sequence \ref{_universal_quotient_} on $Q$. Then
\begin{enumerate}
\item The composition $$(at[1]) \circ e: \cF \lra \cK[1] \lra \cF\otimes \pi^*\L_Q[1]$$ is the classical Atiyah class $At(\cF)$;
\item The composition $$(e\otimes \pi^*\L_Q) \circ at: \cK \lra \cF\otimes \pi^*\L_Q \lra \cK\otimes \pi^*\L_Q[1]$$ is the classical Atiyah class $At(\cK)$.
\end{enumerate}
\end{proposition}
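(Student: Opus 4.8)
The plan is to read both identities off the $3\times 3$ principal-parts diagram, using the standard dictionary between Yoneda composites and push-out/pull-back extensions. Up to the quasi-isomorphism $\L_Q\cong\Omega_Q$ of Lemma \ref{_quot_lci_}, the classical Atiyah classes $At(\cF)$ and $At(\cK)$ are precisely the extension classes of the right-hand and left-hand columns of that diagram, namely the principal-parts sequences $0\to\cF\otimes\pi^*\L_Q\to P^1(\cF)\to\cF\to 0$ and $0\to\cK\otimes\pi^*\L_Q\to P^1(\cK)\to\cK\to 0$, while $e$ and $e\otimes\pi^*\L_Q$ are the classes of the bottom and top rows. Since $at$ is a genuine morphism of sheaves, the composite $(at[1])\circ e$ is the push-out of the extension $e$ along $at\colon\cK\to\cF\otimes\pi^*\L_Q$, and $(e\otimes\pi^*\L_Q)\circ at$ is the pull-back of the top row along $at$. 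Thus it suffices to produce isomorphisms of extensions identifying these push-out and pull-back sequences with the two column sequences; in each case a five-lemma argument then finishes the job.

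For part (1), I would set $\tau=p\circ\sigma\colon\cE\to P^1(\cF)$, where $p\colon P^1(\cE)\to P^1(\cF)$ is the middle-row map, $\sigma$ the chosen splitting, and $\rho_\cE\colon P^1(\cE)\to\cE$ the principal-parts projection. Two facts drive everything. First, composing $\tau$ with the projection $P^1(\cF)\to\cF$ recovers the quotient map $\cE\to\cF$: this is the commutativity of the lower-right square together with $\rho_\cE\circ\sigma=\mathrm{id}$. Second, writing $\iota\colon\cF\otimes\pi^*\L_Q\hookrightarrow P^1(\cF)$ for the inclusion and $j\colon\cK\hookrightarrow\cE$ for the bottom-row inclusion, the restriction $\tau\circ j$ equals $\iota\circ at$ — this is exactly how $at$ was defined in \ref{_reduced_construction_} as the unique lift. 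These two compatibilities say that $\iota$ on $\cF\otimes\pi^*\L_Q$ and $\tau$ on $\cE$ glue to a well-defined morphism from the push-out of $e$ along $at$ to $P^1(\cF)$ (well-defined since $\iota(at(k))-\tau(j(k))=0$), inducing the identity on sub and quotient; the five lemma makes it an isomorphism of extensions, so $(at[1])\circ e=At(\cF)$.

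For part (2), I would instead use the retraction $r\colon P^1(\cE)\to\cE\otimes\pi^*\L_Q$ determined by $\sigma$ (so $r\circ\sigma=0$ and $r$ restricts to the identity on $\cE\otimes\pi^*\L_Q$), and set $\psi=r\circ\iota_\cK\colon P^1(\cK)\to\cE\otimes\pi^*\L_Q$, where $\iota_\cK\colon P^1(\cK)\hookrightarrow P^1(\cE)$ is the middle-row inclusion and $\rho_\cK\colon P^1(\cK)\to\cK$ the projection. The key computation is that $(\rho_\cK,\pm\psi)$ maps $P^1(\cK)$ into the fibre product $\cK\times_{\cF\otimes\pi^*\L_Q}(\cE\otimes\pi^*\L_Q)$ representing the pull-back of the top row: for $x\in P^1(\cK)$ with image $k=\rho_\cK(x)$, one has $\rho_\cE(\iota_\cK(x))=j(k)$ (lower-left square), so $\psi(x)=\iota_\cK(x)-\sigma(j(k))$ lands in $\cE\otimes\pi^*\L_Q$; applying $p$ and using that $p\circ\iota_\cK=0$ by exactness of the middle row leaves $\bar q(\psi(x))=-\iota(at(k))$, where $\bar q\colon\cE\otimes\pi^*\L_Q\to\cF\otimes\pi^*\L_Q$ is the top-row surjection, i.e. the restriction of $p$. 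Hence $at(k)=-\bar q(\psi(x))$, so $(\rho_\cK,-\psi)$ gives a morphism from $P^1(\cK)$ to the pull-back extension compatible with the identity on $\cK\otimes\pi^*\L_Q$ and on $\cK$, and the five lemma again yields $(e\otimes\pi^*\L_Q)\circ at=At(\cK)$.

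The routine inputs — exactness of the rows and columns, $\L_Q\cong\Omega_Q$, and the five lemma — are all in hand, so the main obstacle is bookkeeping rather than ideas: one must pin down the identification of the derived composites with genuine push-out and pull-back sequences of sheaves, and keep the orientation conventions straight. In particular the sign produced in part (2) by the relation $p\circ\iota_\cK=0$, and its absence in part (1), is convention-dependent, so care is needed to match the statement's normalization of $at$, $e$ and the column classes. A secondary subtlety is that $at$ exists only as a lift through $\iota$, so every comparison has to be carried out after the inclusions $\iota$ and $\iota_\cK$ rather than directly.
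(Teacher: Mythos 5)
Your part (1) is correct and coincides, in substance, with the paper's own proof: the paper establishes exactly the same pushout property by checking exactness of the sequence $0\to\cK\to\cE\oplus(\cF\otimes\pi^*\L_Q)\to P^1(\cF)\to 0$, whose two maps are precisely your $(j,-at)$ and your glued map $(\tau,\iota)$; your five-lemma packaging differs only in bookkeeping.

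Part (2), however, contains a genuine error, and it is not the removable convention issue you suggest. Your own (correct) formulas are: $\bar q\circ\psi=-at\circ\rho_\cK$, and $\psi\circ\iota'$ equals the first map of the top row, where $\iota'\colon\cK\otimes\pi^*\L_Q\to P^1(\cK)$ is the column inclusion --- indeed for $x=\iota'(k')$ one has $\rho_\cK(x)=0$, so $\psi(x)=\iota_\cK(x)$, which by the top-left square is the image of $k'$ in $\cE\otimes\pi^*\L_Q$. The second formula shows that $(\rho_\cK,-\psi)$ restricts to $-\textrm{id}$, not $\textrm{id}$, on the sub-object $\cK\otimes\pi^*\L_Q$, contradicting your compatibility claim. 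What your construction actually produces is an isomorphism between the pullback of the top row along $at$ and the extension obtained from the principal-parts sequence of $\cK$ by pushing out along $-\textrm{id}$; carried to its end, your argument proves $(e\otimes\pi^*\L_Q)\circ at=-At(\cK)$. This relative minus sign between parts (1) and (2) is intrinsic, not conventional: replacing $at$ by $-at$, or $e$ by $-e$, or the Atiyah classes by their negatives, flips the signs of the two identities in a correlated way, so under any uniform set of conventions exactly one of the two parts can hold with a plus sign (this is the classical anticommutativity of extension classes around a $3\times 3$ diagram with exact rows and columns, entering here through $p\circ\iota_\cK=0$). In fairness, the paper proves only part (1) --- the only part it later uses --- and merely asserts that part (2) is ``completely parallel''; the defect you ran into is therefore as much in the statement as in your write-up, but as written your proof of (2) does not go through.
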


\begin{proof}
We will only use the first half of the proposition, so only this part will be proved in details. However, the proof of the second part of the
proposition is completely parallel to that of the first part. 

To prove the first part of the proposition, it suffices to show that
\begin{equation*}
\xymatrix{
0 \ar[r] & \cK \ar[r] \ar[d] & \cE \ar[r] \ar[d] & \cF \ar[r] \ar@{=}[d] & 0\\
0 \ar[r] & \cF\otimes\pi^*\L_Q \ar[r] & P^1(\cF) \ar[r] & \cF \ar[r] & 0
}
\end{equation*}
is a pushout diagram, where the first row is the universal family \ref{_universal_quotient_}, while the second row is the principal part
sequence for $\cF$. By the construction of the pushout, in fact we just need to show that 
\begin{equation}\label{_push_out_}
0 \lra \cK \stackrel{\varphi_1}{\lra} \cE\oplus (\cF\otimes\pi^*\L_Q) \stackrel{\varphi_2}{\lra} P^1(\cF) \lra 0
\end{equation}
is exact, where the map $\varphi_1$ is the pair of the first arrow in diagram \ref{_reduced_construction_} and the negation of the reduced
Atiyah class $-at$, and the map $\varphi_2$ is the sum of composition of the middle two arrows in \ref{_reduced_construction_} and 
the downward arrow. 

To verify this claim, we observe that
\begin{itemize}
\item $\varphi_1$ is injective, which is obvious because the first component is injective;
\item $\varphi_2$ is surjective. In fact, $\im(\varphi_2)$ is a submodule of $P^1(\cF)$, and obviously $\cF\otimes\pi^*\L_Q$ lies in 
$\im(\varphi_2)$. Furthermore, $\im(\varphi_2)/(\cF\otimes\pi^*\L_Q)=\cF$ because the image of $\cE$ hits everything in $\cF$;
\item $\im(\varphi_1)\subset\ker(\varphi_2)$, which is due to the construction of the reduced Atiyah class;
\item $\ker(\varphi_2)\subset\im(\varphi_1)$. In fact, if $(e, f')\in \cE\oplus (\cF\otimes\pi^*\L_Q)$ satisfies $\alpha_2(e,f')=0$, then $e$
comes from a certain $k\in\cK$, because its image in $\cF$ is the negation of the image of $f'$in $\cF$, which is $0$. Then it's clear
that $\varphi_1(k)=(e,f')$. 
\end{itemize}
The above observations finish the proof of the exact sequence \ref{_push_out_}. 
\end{proof}

Similar to the discussion of the classical Atiyah class, we will also need to use some deformation interpretations of the reduced Atiyah class. 
We know from the deformation theory of quotients (for example, in Chapter 2 of \cite{Huybrechts-2010b}) that, 
for any coherent $\cO_Q$-module $I$, the space $\hom(\cK, \cF\otimes \pi^*I)$ parametrizes all first order
flat deformations of the universal quotient \ref{_universal_quotient_} to the square 0 extension $Q[I]$. 
We also know that $\ext^1_{Q\times X}(\cK,\cF\otimes\pi^*I)$ contains 
all obstruction classes of lifting the quotient \ref{_universal_quotient_} to the first order.

The following two propositions from \cite{Gillam-2011} concerning the relation between the reduced Atiyah class and the deformation theory.
The statements are parallel to similar results in previous section about classical Atiyah class. The first one relates the reduced Atiyah 
class with deformation of quotients on the trivial square free deformation of the base:

\begin{proposition}\cite{Gillam-2011}\label{_deformation_reduced_atiyah_}
Let $I$ be any coherent $\cO_Q$-module and $at$ be the reduced Atiyah class of the quotient \ref{_universal_quotient_}. The map
$$at \cup (\cF\otimes - ) : \hom_Q(\L_Q, I) \lra \hom_{Q\times X}(\cK,\cF\otimes\pi^*I)$$
given by precomposing with the reduced Atiyah class can be interpreted as 
$$\{\textrm{retracts of }\iota_I\} \lra \{\textrm{flat deformations of the quotient \ref{_universal_quotient_} over }Q[I]\},$$
where the arrow is given by pullback. 
\end{proposition}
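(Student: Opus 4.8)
The plan is to mirror the proof already given for the classical Atiyah class, reading off the pullback deformation directly from the principal-parts diagram \ref{_reduced_construction_} that defines $at$. First I would fix $u \in \hom_Q(\L_Q, I) = \hom_Q(\Omega_Q, I)$ and recall from Lemma \ref{_cotangent_retracts_} that the associated retract $r_u : Q[I] \lra Q$ is the one whose underlying ring map $\cO_Q \lra \cO_Q \oplus I = \cO_{Q[I]}$ is $(\textrm{id}, u \circ d_Q)$. I would then make explicit the deformation obtained by pulling back the universal quotient sequence \ref{_universal_quotient_} along $r_u \times \textrm{id}_X$. The key point here is that $\cE = \pi_X^* E_0$ is pulled back from $X$, so $(r_u \times \textrm{id}_X)^* \cE$ is canonically the trivial lift $\cE_{Q[I]}$; hence the pullback deformation keeps $\cE$ fixed and is recorded entirely by how the kernel $\cK$ moves inside $\cE$, i.e.\ by a well-defined class in $\hom_{Q\times X}(\cK, \cF \otimes \pi^* I)$.

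Next I would identify this class with $(\cF \otimes u) \circ at$ by unwinding the construction \ref{_reduced_construction_}. The splitting $\sigma : \cE \lra P^1(\cE)$, which exists precisely because $\cE$ is a trivial family, is the infinitesimal incarnation of the canonical lift of $\cE$; composing $\cK \hookrightarrow \cE \xrightarrow{\sigma} P^1(\cE) \lra P^1(\cF)$ and lifting to $\cF \otimes \pi^* \L_Q$ measures the first-order failure of $\cK$ to remain in the kernel when $\cE$ is lifted trivially. Pulling back the whole diagram along $r_u$ and contracting $\pi^* \L_Q$ against $u$ via $\cF \otimes u$ turns this universal measurement into the deformation over the specific square-zero extension determined by $u$. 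Concretely, after tensoring the relevant rows with $\cF$ and applying $u$, the resulting extension data is exactly the cocycle classifying $r_u^*$ of \ref{_universal_quotient_}, which is what the proposition asserts.

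The main obstacle I expect is the bookkeeping at the interface between two descriptions of the same deformation: the abstract classification of first-order deformations of a quotient by $\hom_{Q\times X}(\cK, \cF \otimes \pi^* I)$ (as in \cite{Huybrechts-2010b}) on one side, and the explicit principal-parts cocycle produced by $at$ on the other. Matching them requires checking that the splitting $\sigma$ really does correspond to the canonical lift of the trivial family $\cE$, so that no spurious deformation of $\cE$ is introduced, and that the sign and direction conventions in \ref{_reduced_construction_} agree with those in the deformation-theoretic classification. Once this compatibility is pinned down, the identification is forced and the statement follows just as in the classical case; indeed, Proposition \ref{_atiyah_compatibility_}, which exhibits $At(\cF)$ as built from $at$ and the extension class $e$, serves as a useful consistency check that the two normalizations are compatible.
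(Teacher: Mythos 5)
Your proposal is correct in outline, but it takes a genuinely different route from the paper, for the simple reason that the paper does not prove this proposition at all: its ``proof'' is a one-line deferral to \cite[Lemma 3.2]{Gillam-2011}. What you do instead is adapt the paper's own explicit proof of the classical-case statement (the proposition identifying $At(\cF)\cup(\cF\otimes -)$ with pullback of $\cF$ along retracts) to the reduced setting: fix $u\in\hom_Q(\Omega_Q,I)$, realize the retract $r_u$ by the splitting $(\textrm{id},u\circ d_Q)$ as in Proposition \ref{_cotangent_retracts_}, pull back the universal quotient \ref{_universal_quotient_} along $r_u\times\textrm{id}_X$, and use the triviality of $\cE=\pi_X^*E_0$ to see that the pulled-back deformation keeps $\cE$ trivially extended, hence is classified by a class in $\hom_{Q\times X}(\cK,\cF\otimes\pi^*I)$; finally, identify that class with $(\cF\otimes u)\circ at$ by unwinding diagram \ref{_reduced_construction_}, where the splitting $\sigma$ is exactly the infinitesimal form of the trivial lift of $\cE$. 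All of these ingredients are available in the paper (the construction of $\sigma$ in Section \ref{_section_reduced_atiyah_}, the classification of quotient deformations from \cite{Huybrechts-2010b}), and the compatibility you flag---that $\sigma$ introduces no spurious deformation of $\cE$---is precisely the point that makes the diagram chase close, just as tensoring the principal-parts diagram with $\cF$ does in the classical proof. What your approach buys is a self-contained argument in the paper's own notation, filling a gap the paper leaves to the literature; what the paper's citation buys is rigor in the general setting (Gillam works without the l.c.i.\ hypothesis on $Q$, which the paper needs, via Lemma \ref{_quot_lci_}, to avoid simplicial resolutions). To make your sketch complete you would still have to write out the asserted final identification---that the extension data produced by tensoring with $\cF$ and contracting against $u$ is literally the cocycle of $r_u^*$ of \ref{_universal_quotient_}---as an explicit diagram, but the mechanism is the right one and that step is routine given the classical-case template you cite.
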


\begin{proof}
See the proof in \cite[Lemma 3.2]{Gillam-2011}. 
\end{proof}

Next proposition concerns the relation between the Atiyah class and the obstruction of 
deformation of the quotient map, which is also under the essence of ``the product of Atiyah class and Kodaira-Spencer class
is the obstruction class". 

\begin{proposition}[\cite{Gillam-2011}]
Let $I$ be any coherent $\cO_Q$-module and $at$ be the reduced Atiyah class of the quotient \ref{_universal_quotient_}. The map
$$at \cup (\cF \otimes - ) : \ext^1_Q(\L_Q, I) \lra \ext^1_{Q\times X}(\cK,\cF\otimes\pi^*I)$$
given by precomposing with the reduced Atiyah class can be interpreted as 
\begin{multline*}
\{\textrm{thickenings }Q' \textrm{ of }Q \textrm{ by }\iota_I\} \lra 
\{\textrm{obstructions to the existence }\\
\textrm{of flat deformations of the quotient \ref{_universal_quotient_} over }Q'\}.
\end{multline*}
\end{proposition}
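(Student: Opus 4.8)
The plan is to follow the proof of \cite[Lemma 3.2]{Gillam-2011} together with the general principle of \cite[Proposition IV.3.1.8]{Illusie-1971}, reducing the statement over an arbitrary thickening to the trivial-extension case already established in Proposition \ref{_deformation_reduced_atiyah_} by a local-to-global argument. The guiding slogan, as the preamble indicates, is that the obstruction to deforming the quotient is the cup product of the reduced Atiyah class with the Kodaira-Spencer class of the thickening.

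First I would fix a thickening $Q'$ of $Q$ by $I$ and recall, via Proposition \ref{_cotangent_thickenings_}, that its isomorphism class is a Kodaira-Spencer class $\kappa\in\ext^1_Q(\L_Q,I)$. Since $Q$ is a local complete intersection, Lemma \ref{_quot_lci_} lets me replace $\L_Q$ by the sheaf $\Omega_Q$ throughout, so that $\kappa$ is represented by an honest extension of $\cO_Q$-modules, or equivalently by a \v{C}ech $1$-cocycle for $\chom(\Omega_Q,I)$ on a suitable affine open cover $\{U_\alpha\}$ of $Q$. Next I would arrange the cover so that $Q'$ restricts to the trivial extension $U_\alpha[I]$ over each $U_\alpha$, and choose local retracts $r_\alpha$. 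By Proposition \ref{_cotangent_retracts_} the retracts form a torsor under $\hom(\Omega_Q,I)$, so the differences $r_\alpha-r_\beta\in\hom_{U_{\alpha\beta}}(\Omega_Q,I)$ are well defined and form a $1$-cocycle representing $\kappa$.

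The heart of the argument is to compare local deformations on overlaps. Over each $U_\alpha$ the thickening is trivial, so by Proposition \ref{_deformation_reduced_atiyah_} the quotient \ref{_universal_quotient_} admits the flat deformation obtained by pulling back along the retract $r_\alpha$. On an overlap $U_{\alpha\beta}$ the two local deformations differ by an element of $\hom_{U_{\alpha\beta}\times X}(\cK,\cF\otimes\pi^*I)$, and again by Proposition \ref{_deformation_reduced_atiyah_} this difference is exactly $at\cup(\cF\otimes(r_\alpha-r_\beta))$. Thus the failure of the local deformations to glue is the \v{C}ech $1$-cocycle $\{\,at\cup(\cF\otimes(r_\alpha-r_\beta))\,\}$, whose class in $\ext^1_{Q\times X}(\cK,\cF\otimes\pi^*I)$ is on the one hand the obstruction to the existence of a global flat deformation of the quotient over $Q'$, and on the other hand precisely $at\cup(\cF\otimes\kappa)$, because $\{r_\alpha-r_\beta\}$ represents $\kappa$ and the cup product is $\cO_Q$-linear and hence commutes with the \v{C}ech differential. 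This identifies the two maps and finishes the proof.

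The main obstacle I anticipate is the final identification: one must check carefully that the \v{C}ech class measuring the failure of the local trivial deformations to glue is genuinely the standard deformation-theoretic obstruction in $\ext^1_{Q\times X}(\cK,\cF\otimes\pi^*I)$, and not merely a cohomologous class, while keeping track of the signs introduced by the retracts and the cocycle condition. A cleaner but more abstract alternative, in the spirit of \cite[Proposition IV.3.1.8]{Illusie-1971}, would view $\kappa$ as a morphism $\L_Q\to I[1]$ in the derived category, pull it back and tensor to obtain $\cF\otimes\pi^*\kappa\colon\cF\otimes\pi^*\L_Q\to\cF\otimes\pi^*I[1]$, and then verify the identity that the obstruction equals the composite $(\cF\otimes\pi^*\kappa)\circ at$ directly at the level of triangles, using the functoriality of the cotangent complex from Proposition \ref{_cotangent_functorial_}; this avoids the cover entirely at the cost of heavier bookkeeping with the transitivity triangles.
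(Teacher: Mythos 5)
Your Čech reduction breaks down at its very first step, and the failure is not cosmetic. You propose to ``arrange the cover so that $Q'$ restricts to the trivial extension $U_\alpha[I]$ over each $U_\alpha$, and choose local retracts $r_\alpha$.'' But a square-zero thickening restricts to the trivial one over an open $U$ (equivalently, admits a retract there, by Proposition \ref{_cotangent_retracts_}) exactly when its class in $\ext^1_U(\L_U,I)$ vanishes, and this group does \emph{not} vanish on affine opens of a singular scheme: by the local-to-global spectral sequence, $\ext^1_Q(\L_Q,I)$ contains, besides the ``locally trivial'' part $H^1(Q,\chom(\Omega_Q,I))$ that your cocycle argument sees, classes mapping nontrivially to $H^0(Q,\cext^1(\Omega_Q,I))$, and the sheaf $\cext^1(\Omega_Q,I)$ is concentrated on the locus where $\Omega_Q$ fails to be locally free. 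Here $Q$ is precisely \emph{not} smooth at strictly semistable points --- this is why the paper proves Lemma \ref{_quot_lci_} instead of smoothness --- so there exist thickenings $Q'$ that are nontrivial on every neighborhood of a singular point (the model case is the first-order smoothing $xy=\epsilon$ of the node, an lci singularity). For such $Q'$ no local retracts exist, Proposition \ref{_deformation_reduced_atiyah_} produces no local deformations, and your cocycle $\{at\cup(\cF\otimes(r_\alpha-r_\beta))\}$ is simply undefined. Your argument therefore establishes the statement only for locally trivial thickenings, which form a proper subgroup of $\ext^1_Q(\L_Q,I)$ in general; the torsor bookkeeping and sign issues you flag as the ``main obstacle'' are not actually where the difficulty lies.

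The correct route is the ``cleaner but more abstract alternative'' you mention and then defer: one must verify the identity ``obstruction $=$ composition of the reduced Atiyah class with the Kodaira--Spencer class'' directly at the level of extensions and principal parts, with no local trivialization of $Q'$ anywhere. That is exactly what the paper does implicitly, since its proof of this proposition is just the citation of \cite[Lemma 1.13]{Gillam-2011}, whose argument is of this extension-theoretic kind (in the spirit of \cite[Proposition IV.3.1.8]{Illusie-1971}). So to have a genuine proof you would need to carry out that derived-category computation; the Čech shortcut cannot be patched on the singular base $Q$.
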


\begin{proof}
See \cite[Lemma 1.13]{Gillam-2011}.
\end{proof}


\section{Trace Maps and Trace-free parts of Complexes}

On the Quot scheme $Q$, we apply the derived functor $\rpils\crhom(-, \cF)$ to the exact sequence 
\ref{_universal_quotient_} and get an exact triangle of complexes
\begin{equation}\label{_original_triangle_}
\rpils\crhom(\cK,\cF)[-1] \lra \rpils\crhom(\cF,\cF) \lra \rpils\crhom(\cE,\cF).
\end{equation}
In this section we will construct the ``traceless" version of all
the three complexes. 

First of all it's easy to see that the trace map $tr: \crhom(F,F) \lra \cO_{Q\times X}$ splits by a reverse map of scaling. Therefore we have 
$$\crhom(F,F)=\crhom(F,F)_0 \oplus \cO_{Q\times X},$$
where the first summand is the kernel of the above trace map. The splitting leads to 
$$\rpils\crhom(F,F)=\rpils\crhom(F,F)_0 \oplus \rpils\cO_{Q\times X}.$$

However, the second component above can be further decomposed into 
two direct summands as
\begin{eqnarray*}
\rpils\cO_{Q\times X} &=& \rpils\pi_X^*\cO_X \\
&=& R\Gamma(\cO_X) \otimes \cO_Q \\
&=& (H^0(\cO_X) \oplus H^2(\cO_X)[-2]) \otimes \cO_Q \\
&=& \pi_*\cO_{Q\times X} \oplus \rpils\cO_{Q\times X}[-2]
\end{eqnarray*}

Therefore we have a decomposition of the middle complex of \ref{_original_triangle_}
\begin{equation}\label{_middle_decomposition_}
\rpils\crhom(F,F)=\rpils\crhom(F,F)_0 \oplus \pi_*\cO_{Q\times X} \oplus \textrm{R}^2\pi_*\cO_{Q\times X}[-2],
\end{equation}
in which we also keep in mind that 
\begin{eqnarray*}
\pi_*\cO_{Q\times X}&=&\cO_Q\\
\textrm{R}^2\pi_*\cO_{Q\times X}&=&\cO_Q
\end{eqnarray*}

We combine the equation \ref{_original_triangle_} and the obvious morphisms of 
embeddings and splittings from the equation \ref{_middle_decomposition_} and get two morphism
\begin{eqnarray*}
&\alpha:& \pi_*\cO_{Q\times X} \lra \rpils\crhom(\cE,\cF);\\
&\beta:& \rpils\crhom(\cK,\cF)[-1] \lra \textrm{R}^2\pi_*\cO_{Q\times X}[-2].
\end{eqnarray*}

Now we define the ``traceless" version of the other two complexes by completing the exact triangles. More precisely, 
we define
\begin{eqnarray*}
\rpils\crhom(\cE, \cF)_0&=&\textrm{Cone}(\alpha)\\
\rpils\crhom(\cK, \cF)_0&=&\textrm{Cone}(\beta).
\end{eqnarray*}

Then we have the following
\begin{proposition}
We naturally get an exact triangle 
\begin{equation}\label{_traceless_triangle_}
\rpils\crhom(\cK,\cF)_0[-1] \lra \rpils\crhom(\cF,\cF)_0 \lra \rpils\crhom(\cE,\cF)_0.
\end{equation}
\end{proposition}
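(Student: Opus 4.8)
The plan is to manufacture the triangle (\ref{_traceless_triangle_}) out of the original triangle (\ref{_original_triangle_}) by excising the two ``trivial'' summands $\pi_*\cO_{Q\times X}$ and $\textrm{R}^2\pi_*\cO_{Q\times X}[-2]$ one at a time, each excision being an instance of the $3\times 3$ diagram for triangulated categories (the nine lemma), a standard consequence of the octahedral axiom. To fix notation, write $A=\rpils\crhom(\cK,\cF)[-1]$, $B=\rpils\crhom(\cF,\cF)$ and $C=\rpils\crhom(\cE,\cF)$ for the three terms of (\ref{_original_triangle_}), with structure maps $A\xrightarrow{f}B\xrightarrow{g}C$. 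By (\ref{_middle_decomposition_}) we have a splitting $B=B_0\oplus U\oplus V$ with $B_0=\rpils\crhom(\cF,\cF)_0$, $U=\pi_*\cO_{Q\times X}$ and $V=\textrm{R}^2\pi_*\cO_{Q\times X}[-2]$; let $i_U\colon U\to B$ and $p_V\colon B\to V$ be the corresponding inclusion and projection. By construction $\alpha=g\circ i_U$ and $\beta=p_V\circ f$, so that $\rpils\crhom(\cE,\cF)_0=\textrm{Cone}(\alpha)$ and $\rpils\crhom(\cK,\cF)_0=\textrm{Cone}(\beta)$.

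First I would excise $U$. Consider the square whose top row is $U\xrightarrow{\alpha}C$, whose bottom row is $B\xrightarrow{g}C$, whose left column is $i_U\colon U\to B$, and whose right column is the identity of $C$; it commutes precisely because $\alpha=g\circ i_U$. Completing the two rows to triangles produces the cones $\textrm{Cone}(\alpha)=\rpils\crhom(\cE,\cF)_0$ and $\textrm{Cone}(g)=A[1]$, while completing the two columns produces $\textrm{Cone}(i_U)=B_0\oplus V$ and $\textrm{Cone}(\textrm{id}_C)=0$. The nine diagram then supplies a distinguished triangle
\[
A\longrightarrow B_0\oplus V\longrightarrow \rpils\crhom(\cE,\cF)_0\longrightarrow A[1].
\]
Crucially, by the functoriality built into the construction, the first arrow here is $f$ followed by the projection $B\to B_0\oplus V$; in particular its $V$-component is exactly $\beta$.

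Next I would excise $V$ from this new triangle by the same device. Apply the nine diagram to the square whose top row is $A\to B_0\oplus V$ (the map just obtained), whose bottom row is $V\xrightarrow{\textrm{id}}V$, whose left column is $\beta\colon A\to V$, and whose right column is $p_V\colon B_0\oplus V\to V$; this commutes exactly because the $V$-component of $A\to B_0\oplus V$ equals $\beta$. Completing the columns yields $\textrm{Cone}(\beta)=\rpils\crhom(\cK,\cF)_0$ and $\textrm{Cone}(p_V)=B_0[1]$, while the rows give $\rpils\crhom(\cE,\cF)_0$ and $0$. The resulting third row of the diagram is the triangle
\[
\rpils\crhom(\cK,\cF)_0\longrightarrow B_0[1]\longrightarrow \rpils\crhom(\cE,\cF)_0[1]\longrightarrow \rpils\crhom(\cK,\cF)_0[1],
\]
and shifting by $[-1]$ (and rotating) turns this into precisely (\ref{_traceless_triangle_}).

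The one step deserving genuine care --- and the main obstacle --- is the identification made at the end of the first excision: that the connecting morphism $A\to B_0\oplus V$ is the projection of the original $f$, equivalently that its $V$-component is $\beta$. This is exactly what makes the second square commute and allows the two applications of the nine diagram to be chained, and it must be extracted from the naturality of the octahedral construction rather than simply read off from the abstract shape of the cones. Everything else is formal: each square I use contains an identity arrow, so the relevant cones ($\textrm{Cone}(\alpha)$, $\textrm{Cone}(g)$, $\textrm{Cone}(i_U)$ and $\textrm{Cone}(\textrm{id})$ in the first step, and their analogues $\textrm{Cone}(\beta)$, $\textrm{Cone}(p_V)$ in the second) are immediate.
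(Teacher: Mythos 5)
Your proof is correct and takes essentially the same route as the paper: the paper packages each excision as a two-part lemma on triangles $A \to B_1\oplus B_2 \to C$ proved by the octahedral axiom (of which the nine lemma you invoke is the standard consequence), and applies it twice to peel off $\pi_*\cO_{Q\times X}$ and $\textrm{R}^2\pi_*\cO_{Q\times X}[-2]$ in turn. The compatibility you rightly flag --- that the first arrow of the intermediate triangle is the projection of the original map, so that its $V$-component is $\beta$ --- is precisely what the paper's phrase ``natural morphism'' leaves implicit, and it is indeed supplied by the commutativities built into the octahedral (equivalently, $3\times 3$) construction.
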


\begin{proof}
We observe two exact triangles from the above cone construction:
\begin{equation}\label{_EF_traceless_}
\pi_*\cO_{Q\times X} \lra \rpils\crhom(\cE,\cF) \lra \rpils\crhom(\cE,\cF)_0,
\end{equation}
and 
\begin{equation}\label{_KF_traceless_}
\rpils\crhom(\cK,\cF)_0[-1] \lra \rpils\crhom(\cK,\cF)[-1] \lra \textrm{R}^2\pi_*\cO_{Q\times X}[-2].
\end{equation}
Then this proposition is just a direct consequence of next lemma. 
\end{proof}

\begin{lemma}
Let $$A \lra B \lra C$$ be an exact triangle in a triangulated category, where $B=B_1 \oplus B_2.$
\begin{enumerate}
\item If we complete the natural morphism 
$A \lra B_2$ into an exact triangle $$A_0 \lra A \lra B_2,$$ then we get a new exact triangle $$A_0 \lra B_1 \lra C;$$
\item If we complete the natural morphism 
$B_1 \lra C$ into an exact triangle $$B_1 \lra C \lra C_0,$$ then we get a new exact triangle $$A \lra B_2 \lra C_0.$$
\end{enumerate}
\end{lemma}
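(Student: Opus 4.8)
The plan is to deduce both statements directly from the octahedral axiom, using the observation that the direct sum decomposition $B = B_1 \oplus B_2$ supplies a canonical split distinguished triangle. Writing $\iota_1: B_1 \lra B$ and $p_2: B \lra B_2$ for the inclusion and projection, the triangle
\begin{equation*}
B_1 \stackrel{\iota_1}{\lra} B \stackrel{p_2}{\lra} B_2 \stackrel{0}{\lra} B_1[1]
\end{equation*}
is distinguished, being the direct sum of the contractible triangles on $B_1$ and $B_2$. In particular $\textrm{Cone}(\iota_1) = B_2$ and $\textrm{Cone}(p_2) = B_1[1]$. Throughout I write $f: A \lra B$ and $g: B \lra C$ for the two maps of the given triangle, so that the natural morphism $A \lra B_2$ is $p_2 \circ f$ and the natural morphism $B_1 \lra C$ is $g \circ \iota_1$.

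For part (1), I would apply the octahedral axiom to the composite $A \stackrel{f}{\lra} B \stackrel{p_2}{\lra} B_2$, whose total composite is the natural morphism $A \lra B_2$. The three relevant cones are $\textrm{Cone}(f) = C$ (from the given triangle), $\textrm{Cone}(p_2) = B_1[1]$ (from the split triangle above), and $\textrm{Cone}(p_2 \circ f) = A_0[1]$ (from the defining triangle $A_0 \lra A \lra B_2 \lra A_0[1]$ of $A_0$). The octahedral axiom then produces a distinguished triangle
\begin{equation*}
\textrm{Cone}(f) \lra \textrm{Cone}(p_2 \circ f) \lra \textrm{Cone}(p_2) \lra \textrm{Cone}(f)[1],
\end{equation*}
that is, $C \lra A_0[1] \lra B_1[1] \lra C[1]$. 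Shifting by $[-1]$ and rotating yields the desired triangle $A_0 \lra B_1 \lra C \lra A_0[1]$.

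Part (2) is formally dual, and I would establish it by the same mechanism applied to the composite $B_1 \stackrel{\iota_1}{\lra} B \stackrel{g}{\lra} C$, whose total composite is the natural morphism $B_1 \lra C$. Here the three cones are $\textrm{Cone}(\iota_1) = B_2$, $\textrm{Cone}(g) = A[1]$ (from the given triangle), and $\textrm{Cone}(g \circ \iota_1) = C_0$ (from the defining triangle $B_1 \lra C \lra C_0 \lra B_1[1]$ of $C_0$). The octahedral axiom now gives $B_2 \lra C_0 \lra A[1] \lra B_2[1]$, and a single rotation produces $A \lra B_2 \lra C_0 \lra A[1]$, as claimed. (Alternatively, part (2) follows from part (1) by passing to the opposite triangulated category, in which the two halves of the triangle switch roles.)

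Since the argument is entirely formal there is no deep obstacle; the only points demanding care are bookkeeping. First, one must correctly identify the third vertex of each octahedron with the complementary summand --- recognizing $B_2$ and $B_1[1]$ as the cones of $\iota_1$ and $p_2$ --- and then track the shifts accurately through the rotations. Second, to match the lemma on the nose one should verify that the connecting and comparison morphisms produced by the octahedral axiom are the \emph{natural} maps (the restriction $g \circ \iota_1$ in part (1) and the projection $p_2 \circ f$ in part (2)); this is guaranteed by the commutativity built into the octahedral diagram, which forces the new triangle to be compatible with all the given morphisms.
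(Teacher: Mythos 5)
Your proposal is correct and takes the same route as the paper, whose proof consists of the single remark that both parts are applications of the octahedral axiom; you have simply filled in the details (the split triangle $B_1 \to B \to B_2$, the identification of the three cones, and the shifts/rotations) that the paper leaves implicit.
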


\begin{proof}
They are both applications of octohedral axiom of triangulated categories. 
\end{proof}

Next we analyze the fiberwise behaviour of the exact triangle \ref{_traceless_triangle_} and the corresponding 
cohomology groups. Let $p\in Q$ be a closed point. Let $X_p$ be the corresponding fiber in the product $Q\times X$, 
and 
\begin{equation}\label{_quotient_point_}
0 \lra K_p \lra E_p \lra F_p \lra 0
\end{equation}
be the corresponding quotient represented by $p$. 
Then the restriction of the decomposition \ref{_middle_decomposition_} becomes 
$$\rhom(F_p, F_p)=\rhom(F_p, F_p)_0 \oplus H^0(\cO_{X_p}) \oplus H^2(\cO_{X_p})[-2].$$ 

When we restrict the exact triangle \ref{_EF_traceless_} to the closed point $p$, we get the exact triangle
$$H^0(\cO_{X_p}) \lra \rhom(E_p, F_p) \lra \rhom(E_p, F_p)_0.$$ 
When we consider the corresponding long exact sequence of the cohomology groups, we realize that the complexes 
$\rhom(E_p, F_p)$ and $\rhom(E_p, F_p)_0$ actually computes the same cohomology groups except in degree 0, 
where we get an exact sequence $$0 \lra H^0(\cO_{X_p}) \stackrel{\alpha_p}{\lra} \hom(E_p, F_p) \lra \hom(E_p, F_p)_0 \lra 0.$$
From the above construction we see that the arrow $\alpha_p$ factor through $\hom(F_p, F_p)$ by a scalar map
$H^0(\cO_{X_p}) \lra \hom(F_p, F_p)$ and a natural map induced by the quotient \ref{_quotient_point_}, therefore 
$\hom(E_p, F_p)_0$ is obtained by ``removing" the 1-dimensional vector space generated by the map in the 
quotient \ref{_quotient_point_}. 

Similarly, we can analyze the exact triangle \ref{_KF_traceless_} and get a parallel conclusion. Summarizing the discussion 
we obtain the following lemma

\begin{lemma}\label{_fiber_cohomology_}
We have the following pointwise behaviour of the ``traceless" complexes $\rpils\crhom(\cE,\cF)_0$ and $\rpils\crhom(\cK,\cF)_0$:
\begin{enumerate}
\item
The restriction of the complex $\rpils\crhom(\cE,\cF)_0$ to any closed point $p\in Q$ computes the cohomology groups
\begin{equation*}
\ext^i(E_p, F_p)_0=
\begin{cases}
\ext^i(E_p, F_p) & \mbox{if } i\neq 0;\\
\cok (\alpha_p) & \mbox{if } i=0,
\end{cases}
\end{equation*}
where $\alpha_p$ is the composition the scalar and the natural map induced by \ref{_quotient_point_} 
$H^0(\cO_{X_p}) \lra \hom(F_p, F_p) \lra \hom(E_p, F_p)$. 
\item
The restriction of the complex $\rpils\crhom(\cK,\cF)_0$ to any closed point $p\in Q$ computes the cohomology groups
\begin{equation*}
\ext^i(K_p, F_p)_0=
\begin{cases}
\ext^i(K_p, F_p) & \mbox{if } i\neq 1;\\
\ker (\beta_p) & \mbox{if } i=1,
\end{cases}
\end{equation*}
where $\beta_p$ is the composition of the natural map induced by \ref{_quotient_point_} and the trace map
$\ext^1(K_p, F_p) \lra \ext^2(F_p, F_p) \lra H^2(\cO_{X_p})$. 
\end{enumerate}
\qed
\end{lemma}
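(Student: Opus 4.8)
The statement is entirely fiberwise, so the plan is to apply derived restriction $L\iota_p^*$ along $\iota_p\colon\spec\kappa(p)\lra Q$ to the two defining triangles \ref{_EF_traceless_} and \ref{_KF_traceless_}, and then read off the resulting long exact cohomology sequences. Three inputs are needed: a base-change identification of the restricted complexes, the injectivity of $\alpha_p$, and the surjectivity of $\beta_p$.

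First I would set up the base change. The sheaves $\cE$ and $\cF$ are flat over $Q$ (the former is a trivial family, the latter by the Quot construction), and hence so is $\cK$, being the kernel of a surjection $\cE\twoheadrightarrow\cF$ of $Q$-flat sheaves. Since $\pi$ is flat, cohomology and base change gives $L\iota_p^*\rpils\crhom(\cE,\cF)\cong\rhom(E_p,F_p)$ and $L\iota_p^*\rpils\crhom(\cK,\cF)\cong\rhom(K_p,F_p)$. As $L\iota_p^*$ is triangulated it sends the cone defining $\rpils\crhom(\cE,\cF)_0$ (resp. $\rpils\crhom(\cK,\cF)_0$) to the cone of the restricted map, so the two triangles restrict to exactly the triangles written just before the lemma; naturality of the splitting \ref{_middle_decomposition_} identifies the restricted connecting maps with $\alpha_p$ and $\beta_p$.

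For part (1), the long exact sequence of the restricted triangle $H^0(\cO_{X_p})\stackrel{\alpha_p}{\lra}\rhom(E_p,F_p)\lra\rpils\crhom(\cE,\cF)_0|_p$ immediately yields $\ext^i(E_p,F_p)_0\cong\ext^i(E_p,F_p)$ for $i\neq0$, since the first term is concentrated in degree $0$; in degree $0$ one gets $\ext^0(E_p,F_p)_0=\cok(\alpha_p)$, and the injectivity of $\alpha_p$ both produces this clean cokernel and rules out a spurious $H^{-1}$. That injectivity is immediate: by construction $\alpha_p$ carries $1\in H^0(\cO_{X_p})$ to $\textrm{id}_{F_p}$ and then to the universal quotient map $E_p\lra F_p$, which is nonzero because $F_p\neq0$.

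Part (2) is the dual computation, where the shift in \ref{_KF_traceless_} places $H^2(\cO_{X_p})=\C$ in degree $1$. The long exact sequence gives $\ext^i(K_p,F_p)_0\cong\ext^i(K_p,F_p)$ for $i\notin\{1,2\}$ and $\ext^1(K_p,F_p)_0=\ker(\beta_p)$ with no extra hypothesis; the only point requiring an argument is the degree-$2$ equality, which is equivalent to $\beta_p$ being surjective. I would factor $\beta_p$ as $\ext^1(K_p,F_p)\stackrel{\delta}{\lra}\ext^2(F_p,F_p)\stackrel{\mathrm{tr}}{\lra}H^2(\cO_{X_p})$, with $\delta$ the connecting map obtained by applying $\hom(-,F_p)$ to \ref{_quotient_point_}. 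Here $E_p$ is locally free and, in the GIT construction, $E_0$ is a sum of copies of $\cO_X(-m)$ with $m\gg0$, so Serre vanishing gives $\ext^2(E_p,F_p)=H^2(F_p(m))^{\oplus N}=0$; the long exact sequence for \ref{_quotient_point_} then forces $\delta$ to be surjective, and since $\mathrm{tr}$ is surjective (Serre-dual to $1\mapsto\textrm{id}_{F_p}$) so is $\beta_p$. This is the same vanishing that yields the ``$+1$'' in the dimension estimate used in Lemma \ref{_quot_lci_}. I expect the main obstacle to be not a single deep step but the base-change bookkeeping together with pinning down $\beta_p$ as $\mathrm{tr}\circ\delta$ and its surjectivity; once $\ext^2(E_p,F_p)=0$ is in hand, both long exact sequences close up exactly as stated.
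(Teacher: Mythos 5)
Your proposal is correct and takes essentially the same route as the paper, whose own proof is the informal discussion preceding the lemma: restrict the two cone triangles \ref{_EF_traceless_} and \ref{_KF_traceless_} to the closed point $p$ (via base change, using flatness of $\cE$, $\cF$, $\cK$ over $Q$) and read off the long exact cohomology sequences. The paper glosses over part (2) with ``similarly, we get a parallel conclusion,'' so the two points you supply explicitly --- injectivity of $\alpha_p$ (ruling out a spurious $H^{-1}$) and surjectivity of $\beta_p$ via $\ext^2(E_p,F_p)=0$ (needed for the degree-$2$ equality $\ext^2(K_p,F_p)_0=\ext^2(K_p,F_p)$) --- are welcome completions of the same argument rather than a different one.
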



\section{Cotangent Complex of the Quot Scheme}

The goal of this section is to compute the cotangent complex of the GIT-semistable locus of the Quot scheme $Q$. 

\begin{lemma}\label{_reduced_atiyah_map_}
The reduced Atiyah class $at$ induces a morphism from $\rpils\crhom(\cK, \cF)^\vee$
to the cotangent complex $\L_Q$. 
\end{lemma}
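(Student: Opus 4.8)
The plan is to reinterpret the reduced Atiyah class, which lives on $Q\times X$, as a morphism on $Q$ by a sequence of standard adjunctions and then dualize. Recall from Section \ref{_section_reduced_atiyah_} that $at$ is an element of $\hom_{Q\times X}(\cK, \cF\otimes\pi^*\L_Q)$. Since $Q$ is a local complete intersection by Lemma \ref{_quot_lci_}, the cotangent complex $\L_Q$ is perfect (of amplitude in $[-1,0]$), and hence so is its pullback $\pi^*\L_Q$; this is what will let me factor $\pi^*\L_Q$ out of the derived sheaf-Hom and out of the pushforward. Concretely, I would first use the identification $\crhom(\cK, \cF\otimes\pi^*\L_Q)\cong\crhom(\cK,\cF)\otimes\pi^*\L_Q$, valid because $\pi^*\L_Q$ is perfect.

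Next I would pass from the sheaf level to global sections on $Q$. Writing $R\Gamma(Q\times X,-)=R\Gamma(Q,\rpils(-))$ and applying the projection formula to pull $\pi^*\L_Q$ through $\rpils$, I obtain a chain of natural isomorphisms
\begin{align*}
\hom_{Q\times X}(\cK, \cF\otimes\pi^*\L_Q)
&\cong H^0 R\Gamma\big(Q, \rpils(\crhom(\cK,\cF)\otimes\pi^*\L_Q)\big)\\
&\cong H^0 R\Gamma\big(Q, \rpils\crhom(\cK,\cF)\otimes\L_Q\big)\\
&\cong \hom_Q\big(\cO_Q, \rpils\crhom(\cK,\cF)\otimes\L_Q\big).
\end{align*}
Thus the class $at$ is recorded by a morphism $\cO_Q\to C\otimes\L_Q$, where I abbreviate $C=\rpils\crhom(\cK,\cF)$.

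The step I expect to be the main obstacle is establishing that $C$ is a perfect complex on $Q$, since this is exactly what makes the final duality legitimate. Here the key observation is that $\pi\colon Q\times X\to Q$ is smooth and proper, being the base change of the smooth proper morphism $X\to\spec\C$; its fibres are copies of the smooth K3 surface $X$. The families $\cK$ and $\cF$ are coherent and flat over $Q$, and $\crhom(\cK,\cF)$ restricts on each fibre to a perfect complex on the smooth surface $X$. Standard perfectness results for derived pushforwards along a smooth proper morphism, combined with $Q$-flatness and cohomology-and-base-change (and relative Serre duality, noting the relative dualizing sheaf $\pi_X^*\cO_X$ is trivial on the K3 fibres), then yield that $C=\rpils\crhom(\cK,\cF)$ is perfect.

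Granting perfectness of $C$, I would finish by invoking the duality isomorphism $C\otimes\L_Q\cong\crhom(C^\vee,\L_Q)$ together with adjunction, giving
\begin{align*}
\hom_Q\big(\cO_Q, C\otimes\L_Q\big)
&\cong \hom_Q\big(\cO_Q, \crhom(C^\vee,\L_Q)\big)\\
&\cong \hom_Q\big(C^\vee, \L_Q\big).
\end{align*}
Tracing $at$ through all of these identifications then produces the desired morphism $\rpils\crhom(\cK,\cF)^\vee=C^\vee\to\L_Q$, which is precisely the content of the lemma. I would remark that this construction only builds the morphism; the verification that it is a quasi-isomorphism onto $\L_Q$ (after passing to the traceless version) is deferred to the cohomological comparison carried out in the remainder of the section.
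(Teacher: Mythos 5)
Your construction is correct, but it takes a genuinely different route from the paper's. The paper produces the identification $\hom_{Q\times X}(\cK,\cF\otimes\pi^*\L_Q)\cong\hom_Q(\rpils\crhom(\cK,\cF)^\vee,\L_Q)$ in two duality steps: first Grothendieck--Verdier duality along $\pi$ (using $\pi^!\L_Q=\pi^*\L_Q[2]$, the relative dualizing sheaf $\omega_\pi$ being trivial since $X$ is K3) converts $\hom_{Q\times X}(\crhom(\cF,\cK),\pi^*\L_Q)$ into $\hom_Q(\rpils\crhom(\cF,\cK)[2],\L_Q)$, and then relative Serre duality swaps the arguments, identifying $\rpils\crhom(\cF,\cK)[2]$ with $\rpils\crhom(\cK,\cF\otimes\omega_\pi)^\vee\cong\rpils\crhom(\cK,\cF)^\vee$. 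You never swap $\cK$ and $\cF$: you factor the perfect complex $\pi^*\L_Q$ out of $\crhom$ and through $\rpils$ via the projection formula, land in $\hom_Q(\cO_Q,C\otimes\L_Q)$ with $C=\rpils\crhom(\cK,\cF)$, and then use reflexivity of the perfect complex $C$ to rewrite this as $\hom_Q(C^\vee,\L_Q)$. The trade-off is clear: the paper's route obtains the dual complex directly from Serre duality while leaving the finiteness hypotheses implicit, whereas your route is more elementary (projection formula plus tensor-hom for perfect complexes) at the cost of an explicit perfectness verification for $C$ --- which you rightly flag as the crux, and which does hold by the standard argument you sketch ($\cK$ and $\cF$ are $Q$-flat coherent sheaves on $Q\times X$ with $X$ a smooth projective surface, hence perfect, and the derived pushforward of a perfect complex along the smooth projective $\pi$ is perfect). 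Note that this perfectness is tacitly needed in the paper's proof as well, for $(-)^\vee$ to behave as an involution, so making it explicit is a genuine gain; and since Grothendieck--Verdier duality with trivial $\omega_\pi$ is itself assembled from the projection formula and a trace map, the two constructions yield the same morphism $\gamma$ up to standard compatibilities.
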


\begin{proof}
In section \ref{_section_reduced_atiyah_}, we defined the reduced Atiyah class 
$$at\in\hom_{Q\times X}(\cK, \cF\otimes\pi^*\L_Q).$$ By
Grothendieck-Verdier duality and Serre duality, we have
\begin{eqnarray*}
\hom_{Q\times X}(\cK, \cF\otimes\pi^*\L_Q) &=& \hom_{Q\times X}(\crhom(\cF, \cK), \pi^*\L_Q)\\
                                                                        &=& \hom_Q(\rpils\crhom(\cF, \cK)[2], \L_Q)\\
                                                                        &=& \hom_Q(\rpils\crhom(\cK, \cF\otimes\omega_{\pi})^\vee, \L_Q)\\
                                                                        &=& \hom_Q(\rpils\crhom(\cK, \cF)^\vee, \L_Q).
\end{eqnarray*}
Therefore the class $at$ induces a morphism between the two complexes, denoted by
$$\gamma: \rpils\crhom(\cK, \cF)^\vee \lra \L_Q.$$
\end{proof}

The rest of the section is aiming at proving that, although $\gamma$ itself is not a quasi-isomorphism, if we replace 
the complex $\rpils\crhom(\cK, \cF)^\vee$ by its ``traceless" counterpart $\rpils\crhom(\cK, \cF)_0^\vee$ constructed in previous 
section, then we get a quasi-isomorphism. We start from the following lemma comparing the degree 0 cohomology groups. 

\begin{lemma}
The morphism $\gamma$ defined as above induces an isomorphism on the 0-th cohomology groups of the two complexes. 
\end{lemma}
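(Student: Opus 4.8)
The plan is to verify that $H^0(\gamma)$ is an isomorphism of coherent sheaves on $Q$ by a Yoneda argument, converting the statement into the deformation-theoretic content of the reduced Atiyah class established in Section \ref{_section_reduced_atiyah_}. First I would identify the two sheaves in question. Since $Q$ is a scheme, $H^0(\L_Q)=\Omega_Q$, and because $\L_Q$ has amplitude $(-\infty,0]$ with top cohomology in degree $0$, a truncation argument shows $\hom_D(\L_Q,I)=\hom_Q(\Omega_Q,I)$ for every coherent $\cO_Q$-module $I$. Writing $\mathcal{G}:=H^0(\rpils\crhom(\cK,\cF)^\vee)$, I would check that the complex $\rpils\crhom(\cK,\cF)^\vee$ also has its top cohomology in degree $0$ (its cohomology sits in degrees $[-2,0]$, as one sees fibrewise from relative Serre duality $\rpils\crhom(\cK,\cF)^\vee\cong\rpils\crhom(\cF,\cK)[2]$ together with $\ext^{\geq 2}(K_p,F_p)=0$), so that the same truncation argument gives $\hom_D(\rpils\crhom(\cK,\cF)^\vee,I)=\hom_Q(\mathcal{G},I)$.

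Next I would reduce the lemma to Yoneda: a morphism of coherent sheaves is an isomorphism iff precomposition with it induces a bijection $\hom_Q(\Omega_Q,I)\to\hom_Q(\mathcal{G},I)$, naturally in $I$. Under the two identifications above this map is $-\circ\gamma$, and since $\gamma$ is by construction (Lemma \ref{_reduced_atiyah_map_}) the image of $at$ under the Grothendieck--Verdier/Serre duality isomorphism $\Phi_{\L_Q}\colon \hom_{Q\times X}(\cK,\cF\otimes\pi^*\L_Q)\xrightarrow{\sim}\hom_Q(\rpils\crhom(\cK,\cF)^\vee,\L_Q)$, the naturality of $\Phi_{(-)}$ in the second variable identifies $-\circ\gamma$ with
$$at\cup(\cF\otimes-)\colon \hom_Q(\L_Q,I)\lra \hom_{Q\times X}(\cK,\cF\otimes\pi^*I).$$
Thus everything reduces to showing that this last map is bijective for every $I$.

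For the bijectivity I would invoke Proposition \ref{_deformation_reduced_atiyah_}, which interprets $at\cup(\cF\otimes-)$ as the pullback map $\{\text{retracts of }\iota_I\}\to\{\text{flat deformations of the universal quotient over }Q[I]\}$; by Proposition \ref{_cotangent_retracts_} its source is $\hom_Q(\L_Q,I)$ and the deformation theory of quotients identifies its target with $\hom_{Q\times X}(\cK,\cF\otimes\pi^*I)$, so both sides are indeed the linear spaces above. The crucial input is that $\cE=\pi_X^*E_0$ is a \emph{trivial} family over $Q$: consequently every flat deformation of the universal quotient over $Q[I]$ keeps the fixed ambient sheaf $\cE$, hence is precisely a $Q[I]$-point of the Quot scheme restricting to the universal quotient over $Q$. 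By the universal property of the Quot scheme such a point is exactly a morphism $Q[I]\to Q$ restricting to $\mathrm{id}_Q$, i.e. a retract, and the corresponding deformation is its pullback of the universal quotient. Hence $at\cup(\cF\otimes-)$ is the tautological bijection between retracts and deformations, in particular an isomorphism of vector spaces, natural in $I$.

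Finally, Yoneda's lemma yields that $H^0(\gamma)$ is an isomorphism. The step I expect to be the main obstacle is the bijectivity of $at\cup(\cF\otimes-)$: the essential point there is the universal-property identification of deformations with retracts, which genuinely relies on the triviality of $\cE$; without it a deformation of the quotient could also deform the ambient bundle and would no longer correspond to a mere retract of $Q[I]$. A secondary technical care is the verification that the negative-degree cohomologies of $\rpils\crhom(\cK,\cF)^\vee$ do not contribute to $\hom_D(-,I)$ in degree $0$, which I handle by the truncation and hyperext spectral-sequence bookkeeping indicated above.
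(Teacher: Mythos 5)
Your proposal is correct and follows essentially the same route as the paper's own proof: the Yoneda reduction using that both complexes have cohomology only in non-positive degrees, the Grothendieck--Verdier/Serre duality identification of $\hom_Q(\rpils\crhom(\cK,\cF)^\vee, I)$ with $\hom_{Q\times X}(\cK,\cF\otimes\pi^*I)$, and the deformation-theoretic conclusion that $at\cup(\cF\otimes-)$ is a bijection between retracts of $Q[I]$ and deformations of the universal quotient, by the universal property of the Quot scheme. Your additional care about the amplitude of $\rpils\crhom(\cK,\cF)^\vee$ and the role of the triviality of $\cE$ only makes explicit what the paper leaves implicit.
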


This lemma was proved in \cite[Theorem 4.2]{Gillam-2011}. For the sake of completeness we include the proof here. 

\begin{proof}
For simplicity, in this proof we denote $$C:=\rpils\crhom(\cK, \cF)^\vee.$$
We are aiming to show that $$H^0(\gamma):H^0(C)\lra H^0(\L_Q)$$ is an isomorphism. 
By Yoneda's lemma for the abelian category of coherent sheaves, it suffices to show that, for every coherent sheaf $I$ on $Q$, 
the induces morphism $$H^0(\alpha)_I: \hom_Q(H^0(\L_Q), I)\lra\hom_Q(H^0(C), I)$$ is an isomorphism. 
However, notice that both complexes $\L_Q$ and $C$
have non-trivial cohomology only in non-positive degrees. Therefore, we have 
\begin{eqnarray*}
\hom_Q(H^0(\L_Q), I) &=& \hom_Q(\L_Q, I),\\
\hom_Q(H^0(C), I) &=& \hom_Q(C, I).
\end{eqnarray*}
Hence it suffices to show that the pullback morphism $$\gamma_I:\hom_Q(\L_Q, I)\lra\hom_Q(C, I)$$ is an isomorphism. 
Again by Grothendieck and Serre duality theorems, we have 
\begin{eqnarray*}
\hom_Q(C,I) &=& \hom_Q(\rpils\crhom(\cK, \cF)^\vee, I)\\
                 &=& \hom_Q(\rpils\crhom(\cF, \cK)[2], I)\\
                 &=& \hom_{Q\times X}(\crhom(\cF, \cK), \pi^*I)\\
                 &=& \hom_{Q\times X}(\cK, \cF\otimes\pi^*I).
\end{eqnarray*}
Therefore the morphism $\gamma_I$ becomes
$$\alpha_I: \hom_Q(\L_Q, I)\lra\hom_{Q\times X}(\cK, \cF\otimes\pi^*I),$$
which is given by the product with the reduced Atiyah class $at$. The deformation interpretation of this morphism
is given by Proposition \ref{_deformation_reduced_atiyah_},
namely, for any retraction $\iota: Q[I]\ra Q$, $\gamma_I(\iota)$ is a deformation of quotient of $\cE$, given by pulling back 
the universal quotient \ref{_universal_quotient_} from $Q$ to $Q[I]$
via $\iota$. 

However, because of the universal property of $Q$, any deformation of the universal quotient 
is obtained by pulling back from $Q$. Therefore, 
the above morphism $\alpha_I$ is an isomorphism, which concludes that $H^0(\alpha)$ is 
also an isomorphism. 
\end{proof}

We take the dual of the exact triangle \ref{_KF_traceless_} and get another exact triangle
$$\textrm{R}^2\pi_*\cO_{Q\times X}[1] \lra \rpils\crhom(\cK, \cF)^\vee \lra \rpils\crhom(\cK, \cF)^\vee_0.$$

\begin{lemma}
The morphism $$\gamma: \rpils\crhom(\cK, \cF)^\vee \lra \L_Q$$ can be lifted to a morphism
$$\gamma_0: \rpils\crhom(\cK, \cF)^\vee_0 \lra \L_Q.$$
\end{lemma}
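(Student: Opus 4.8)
The plan is to obtain $\gamma_0$ as a factorization of $\gamma$ through the natural map $\rpils\crhom(\cK,\cF)^\vee \lra \rpils\crhom(\cK,\cF)^\vee_0$ supplied by the dual triangle displayed just above the lemma, and to produce this factorization by a routine obstruction argument in $D^b(Q)$. Concretely, I would apply the cohomological functor $\hom_{D^b(Q)}(-,\L_Q)$ to that triangle
$$\textrm{R}^2\pi_*\cO_{Q\times X}[1] \stackrel{a}{\lra} \rpils\crhom(\cK, \cF)^\vee \stackrel{b}{\lra} \rpils\crhom(\cK, \cF)^\vee_0,$$
which yields the long exact sequence
$$\cdots \lra \hom(\rpils\crhom(\cK, \cF)^\vee_0, \L_Q) \stackrel{b^*}{\lra} \hom(\rpils\crhom(\cK, \cF)^\vee, \L_Q) \stackrel{a^*}{\lra} \hom(\textrm{R}^2\pi_*\cO_{Q\times X}[1], \L_Q) \lra \cdots.$$
By exactness, $\gamma$ lies in the image of $b^*$ — i.e.\ $\gamma=\gamma_0\circ b$ for some $\gamma_0$ — exactly when $a^*\gamma=\gamma\circ a$ vanishes in $\hom(\textrm{R}^2\pi_*\cO_{Q\times X}[1], \L_Q)$.

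The key step is therefore to show this obstruction group is zero, and here I would invoke Lemma \ref{_quot_lci_}: since $Q$ is a local complete intersection, $\L_Q$ is quasi-isomorphic to the single sheaf $\Omega_Q$ placed in degree $0$. Combined with $\textrm{R}^2\pi_*\cO_{Q\times X}=\cO_Q$, the target becomes
$$\hom_{D^b(Q)}(\cO_Q[1], \Omega_Q) = \hom_{D^b(Q)}(\cO_Q, \Omega_Q[-1]) = \ext^{-1}_Q(\cO_Q, \Omega_Q) = 0,$$
the last vanishing because $\ext$ groups between honest sheaves vanish in negative degree. Hence $\gamma\circ a=0$ automatically and the lift $\gamma_0$ exists.

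There is essentially no serious obstacle: all the content sits in the vanishing $\hom(\cO_Q[1],\L_Q)=0$, which is immediate once Lemma \ref{_quot_lci_} places $\L_Q$ in degree $0$. The one point I would take care over is to confirm that the triangle used really is the dualization of \ref{_KF_traceless_}, so that its leftmost term is genuinely $\textrm{R}^2\pi_*\cO_{Q\times X}[1]\cong\cO_Q[1]$ sitting in cohomological degree $-1$; the whole argument hinges on that term being concentrated in a single negative degree. If uniqueness of $\gamma_0$ were also wanted, the same long exact sequence reduces it to $\hom(\textrm{R}^2\pi_*\cO_{Q\times X}[2],\L_Q)=\ext^{-2}_Q(\cO_Q,\Omega_Q)=0$, making $b^*$ injective, but for the statement as phrased only the existence of a lift is required.
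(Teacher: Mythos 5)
Your proposal is correct and follows essentially the same route as the paper: both factor $\gamma$ through the dual of the triangle \ref{_KF_traceless_} and reduce existence of the lift to the vanishing of $\hom_Q(\textrm{R}^2\pi_*\cO_{Q\times X}[1], \L_Q)$, which holds by degree reasons since Lemma \ref{_quot_lci_} places $\L_Q$ in degree $0$ while $\cO_Q[1]$ sits in degree $-1$. Your write-up merely makes the long exact sequence explicit (and adds the uniqueness remark), which the paper leaves implicit.
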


\begin{proof}
It suffices to prove that $$\hom_Q(\textrm{R}^2\pi_*\cO_{Q\times X}[1], \L_Q)=0.$$
In fact, by \ref{_quot_lci_}, $\L_Q$ is quasi-isomorphic to a single sheaf in degree 0. Also notice that 
$\textrm{R}^2\pi_*\cO_{Q\times X}[1]=\cO_Q[1]$ is a single 
sheaf lying in degree $-1$, due to degree reason the above equation is true. Therefore $\gamma$ 
can be lifted to $\gamma_0$. 
\end{proof}

\begin{lemma}
The complex $\rpils\crhom(\cK, \cF)^\vee_0$ has perfect amplitude in $[-1, 0]$, and is quasi-isomorphic to a single sheaf in degree 0.
\end{lemma}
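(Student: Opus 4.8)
The plan is to study the perfect complex $C:=\rpils\crhom(\cK,\cF)_0$ together with its derived dual $C^\vee=\rpils\crhom(\cK,\cF)^\vee_0$ by examining their derived restrictions to closed points. Note first that $C$ is a perfect complex: $\cK$ and $\cF$ are flat over $Q$ and perfect on each fibre $X_p$ over the smooth surface $X$, so $\rpils\crhom(\cK,\cF)$ is perfect by cohomology and base change, and $C$, being the cone of a morphism between this complex and $\textrm{R}^2\pi_*\cO_{Q\times X}[-2]=\cO_Q[-2]$, inherits perfectness. This is what licenses all the dualizations below.

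First I would settle the amplitude. By Lemma \ref{_fiber_cohomology_}, together with the vanishing $\ext^i(K_p,F_p)=0$ for $i\geq 2$ recorded in the proof of Lemma \ref{_quot_lci_}, the derived fibre $C\otimes^{L} k(p)$ has cohomology concentrated in degrees $0$ and $1$: it is $\hom(K_p,F_p)$ in degree $0$ and $\ext^1(K_p,F_p)_0=\ker(\beta_p)$ in degree $1$. Hence $C$ has tor-amplitude in $[0,1]$. Since $C$ is perfect, derived restriction commutes with dualization, so $C^\vee\otimes^{L} k(p)\cong(C\otimes^{L} k(p))^\vee$ has cohomology $\hom(K_p,F_p)^*$ in degree $0$ and $\ker(\beta_p)^*$ in degree $-1$, and nothing outside $[-1,0]$. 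By the fibrewise criterion for tor-amplitude, $C^\vee$ has perfect amplitude in $[-1,0]$.

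The remaining and essential point is that $C^\vee$ is in fact quasi-isomorphic to a single sheaf in degree $0$, i.e. that $H^{-1}(C^\vee)=0$ as a sheaf on $Q$. One must resist reading this off the fibres: the nonvanishing of $H^{-1}(C^\vee\otimes^{L} k(p))=\ker(\beta_p)^*$ at non-stable points does not obstruct the conclusion, because it reflects the torsion (the $\textrm{Tor}_1$) of the degree-$0$ cohomology sheaf rather than a genuine degree $-1$ class. To compute $H^{-1}(C^\vee)$ I would use the spectral sequence $E_2^{p,q}=\ext^p(H^{-q}(C),\cO_Q)\Rightarrow H^{p+q}(C^\vee)$. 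Since $C$ has nonzero cohomology sheaves only in degrees $0$ and $1$, the sole contribution to total degree $-1$ is $\hom(H^1(C),\cO_Q)$, so it suffices to prove that this group vanishes. This reduces matters to showing that $H^1(C)$ is a torsion sheaf. As $1$ is the top degree of the tor-amplitude of $C$, the top cohomology sheaf commutes with base change, giving $H^1(C)\otimes k(p)\cong\ker(\beta_p)=\ext^1(K_p,F_p)_0$; by the proof of Lemma \ref{_quot_lci_} this vanishes at every stable point. Thus $\textrm{supp}\,H^1(C)\subseteq Q\setminus Q^s$, a proper closed subset since $Q^s$ is open and dense ($Q$ is irreducible and carries a stable quotient), so $H^1(C)$ is torsion. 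Finally $Q$ is integral (irreducible, and reduced since it is lci, hence Cohen-Macaulay, and generically smooth), so $\cO_Q$ is torsion-free and every homomorphism from the torsion sheaf $H^1(C)$ into $\cO_Q$ vanishes. Hence $\hom(H^1(C),\cO_Q)=0$, so $H^{-1}(C^\vee)=0$, and combined with the amplitude bound $C^\vee$ is quasi-isomorphic to the single sheaf $H^0(C^\vee)$ in degree $0$.

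I expect the main obstacle to be precisely this separation of issues in the last two steps. The fibrewise criterion controls only tor-amplitude and is blind to whether $H^{-1}$ of the complex itself vanishes; the crux is to recognize $H^1(C)$ as the obstruction to local freeness, supported on the strictly semistable locus, and then to convert its torsion-ness into the vanishing of $\hom(H^1(C),\cO_Q)$ using the integrality of $Q$.
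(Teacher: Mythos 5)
Your proof is correct, and its first half runs parallel to the paper's: both reduce the amplitude statement to the fibre computation of Lemma \ref{_fiber_cohomology_} together with the vanishing $\ext^i(K_p,F_p)=0$ for $i\geq 2$ from the proof of Lemma \ref{_quot_lci_} (the paper implements the fibrewise tor-amplitude criterion by hand, truncating a finite locally free resolution of the dual complex term by term until it sits in degrees $[-1,0]$). Where you genuinely diverge is the vanishing of $H^{-1}$. The paper stays on the dual side: having represented $\rpils\crhom(\cK,\cF)^\vee_0$ by a two-term complex $A^{-1}\to A^0$ of locally free sheaves, it observes that this map is fibrewise injective over the stable locus $Q^s$, so its kernel sheaf vanishes on that dense open set; being also a subsheaf of the torsion-free sheaf $A^{-1}$, the kernel is zero. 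You instead stay on the un-dualized side: cohomology-and-base-change in the top degree shows $H^1(C)$ is supported on $Q\setminus Q^s$, hence is torsion, and the hyperext spectral sequence identifies $H^{-1}(C^\vee)$ with $\chom(H^1(C),\cO_Q)$ (note your $\ext^p$ and $\hom$ must be read as the sheaf-level $\cext^p$ and $\chom$ for the spectral sequence to compute cohomology sheaves of $C^\vee$), which then vanishes because $\cO_Q$ is torsion-free. The two arguments consume identical geometric input -- density of $Q^s$, irreducibility of $Q$, and the torsion-freeness argument over an integral scheme -- so neither is stronger, but the trade-offs differ: the paper's version is more elementary and self-contained (no spectral sequence; everything is visible in an explicit resolution), while yours is resolution-free, isolates the entire obstruction in the single torsion sheaf $H^1(C)$ living on the strictly semistable locus, and, a genuine merit, spells out the integrality of $Q$ (lci hence Cohen--Macaulay, plus generic smoothness at stable points, gives reducedness) which the paper's ``subsheaf of a locally free sheaf is torsion-free'' step uses silently.
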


\begin{proof}
We first look at the fiber cohomology of $\rpils\crhom(\cK, \cF)_0$ before taking the dual. From the second 
part of Lemma \ref{_fiber_cohomology_}, we already know all cohomology groups when we restrict the 
complex $\rpils\crhom(\cK, \cF)_0$ to any closed point $p\in Q$. Moreover, by the long exact sequence induced 
by the restriction of the universal quotient at the point $p$, we can easily tell which of them vanish. It's not 
hard to find out that
\begin{equation*}
\ext^i(K_p,F_p)_0=
\begin{cases}
\hom(K_p,F_p) & \textrm{if } i=0;\\
\ext^2(F_p,F_p)_0 & \textrm{if } i=1;\\
0 & \textrm{otherwise}.
\end{cases}
\end{equation*}
Therefore, we know that the only possible non-trivial fiber cohomology lies in degree 0 and 1. Furthermore, over the locus where $F_p$ is 
stable, the only non-trivial fiber cohomology lies in degree 0. 

Now we turn to the dual complex $\rpils\crhom(\cK, \cF)^\vee_0$. Since the fiber cohomology respect the 
operation of taking duals, we conclude that, the only possible non-trivial fiber cohomology lies in degree $-1$ and 0. Furthermore, 
on the open subset of $Q$ where $F_p$ is stable, the fiber cohomology in degree $-1$ is even trivial. 

Now we are ready to prove the two statements in the lemma. First of all, we can always resolve the complex
$\rpils\crhom(\cK, \cF)^\vee_0$ by a perfect complex of finite length. We denote this perfect resolution by
$$A^s \lra A^{s+1} \lra \cdots \lra A^{t-1} \lra A^t.$$

We prove the first statement. If $s<-1$, we can actually truncate the complex at the position $s+1$, by replacing $A^s$ by 0
and $A^{s+1}$ by the cokernel of the map $A^s\lra A^{s+1}$. We claim that this cokernel is again a locally free
sheaf over $Q$. In fact, for any closed point $p\in Q$, the kernel of the fiber map $A^s_p\lra A^{s+1}_p$ is the fiber cohomology
group $\ext^{-s}(K_p, F_p)_0^\vee$, which by the above discussion vanishes when $s<-1$. Therefore the morphism 
between the two locally free sheaves $A^s$ and $A^{s+1}$ is fiberwise injective, hence has a locally free cokernel. 
This operation increases the lowest degree of the locally free resolution by 1. We can repeat this procedure until we have 
$s=-1$. 

Similarly, if $t>0$, we can always truncate the resolution step by step from the highest degree, while keeping every term 
in the complex locally free, until we reach $t=0$, by using the fact that the fiberwise cohomology groups vanish in positive
degrees. Therefore, we know tha the complex $\rpils\crhom(\cK, \cF)^\vee_0$ is quasi-isomorphic to a perfect complex
in degree $[-1, 0]$, which we still denote by $$A^{-1}\lra A^0.$$

Finally, to prove the second statement, we only need to show that the cohomology of this 2-term complex in degree $-1$ 
vanishes. In fact, we already know that over an open dense subset $Q^s$ of $Q$ where the quotient sheaf is stable, the fiberwise
cohomology of this 2-term cohomology is 0 in degree $-1$, which implies that the morphism of locally free sheaves 
$$A^{-1}\lra A^0$$ is injective over the stable locus $Q^s$. However any subsheaf of a locally free sheaf is torsion free, 
hence we conclude that the kernel sheaf is 0, which proves the second statement.
\end{proof}

Finally, we can state the main result of this section
\begin{theorem}\label{_quot_cotangent_complex_}
The morphism $$\gamma_0: \rpils\crhom(\cK, \cF)^\vee_0 \stackrel{\cong}{\lra} \L_Q$$ is a quasi-isomorphism. 
\end{theorem}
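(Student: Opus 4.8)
The plan is to deduce the statement purely by cohomological bookkeeping, since the two substantive ingredients—the isomorphism on $H^0$ and the concentration of both complexes in degree $0$—have already been established. First I would record that by Lemma \ref{_quot_lci_} the target $\L_Q$ is quasi-isomorphic to the single cotangent sheaf $\Omega_Q$ sitting in degree $0$, while by the preceding lemma the source $\rpils\crhom(\cK, \cF)^\vee_0$ is likewise quasi-isomorphic to a single sheaf in degree $0$. Consequently both complexes have vanishing cohomology in every degree other than $0$, so to prove that $\gamma_0$ is a quasi-isomorphism it suffices to show that $H^0(\gamma_0)$ is an isomorphism; the comparison in all other degrees is automatic because both sides vanish there.

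The remaining point is to transfer the $H^0$-isomorphism already proved for $\gamma$ to its lift $\gamma_0$. Here I would use the exact triangle
$$\textrm{R}^2\pi_*\cO_{Q\times X}[1] \lra \rpils\crhom(\cK, \cF)^\vee \lra \rpils\crhom(\cK, \cF)^\vee_0$$
together with the observation that $\textrm{R}^2\pi_*\cO_{Q\times X}[1]=\cO_Q[1]$ is a single sheaf placed in degree $-1$. Its only nonzero cohomology is $H^{-1}(\cO_Q[1])=\cO_Q$, so in the associated long exact sequence the terms flanking degree $0$ are $H^0(\cO_Q[1])=0$ and $H^1(\cO_Q[1])=0$. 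It follows that the natural map $\rpils\crhom(\cK, \cF)^\vee \lra \rpils\crhom(\cK, \cF)^\vee_0$ induces an isomorphism on $H^0$.

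Finally, since $\gamma_0$ was constructed precisely as a lift of $\gamma$ along this natural map, passing to $H^0$ exhibits $H^0(\gamma)$ as $H^0(\gamma_0)$ composed with the isomorphism just described. As $H^0(\gamma)$ is an isomorphism by the earlier lemma, so is $H^0(\gamma_0)$, and combining this with the first paragraph shows that $\gamma_0$ induces isomorphisms on all cohomology groups, hence is a quasi-isomorphism. The step that carries the real weight is not in this final assembly but in the two preparatory results it invokes: the genuine obstacle was verifying that the traceless complex is concentrated in degree $0$ (which forced the torsion-free argument over the stable locus) and that the reduced Atiyah class induces the $H^0$-isomorphism via the universal property of $Q$. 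Once these are in hand, the quasi-isomorphism follows from the degree count above with no further computation.
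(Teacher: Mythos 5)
Your proof is correct and follows essentially the same route as the paper: both complexes are concentrated in degree $0$, so it suffices to check that $H^0(\gamma_0)$ is an isomorphism. Your write-up is in fact slightly more complete than the paper's one-line proof, since you explicitly justify transferring the $H^0$-isomorphism from $\gamma$ to its lift $\gamma_0$ using the exact triangle with $\cO_Q[1]$ concentrated in degree $-1$, a step the paper leaves implicit.
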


\begin{proof}
We have proved that both complexes have non-trivial cohomology groups only in degree 0, and $H^0(\alpha_0)$ is an isomorphism, 
from which the proposition is clear. 
\end{proof}


\section{Cotangent Complex of the Moduli Stack}

The goal of this section is to compute the cotangent complex of the moduli stack $\cM=[Q/G]$. To achieve this goal, we will first build up
the commutative diagram
\begin{equation}\label{_traceless_commute_}
\xymatrix{
\rpils\crhom(\cK, \cF)_0^\vee \ar[r] \ar[d] & \rpils\crhom(\cE, \cF)_0^\vee \ar[d]\\
\L_Q \ar[r] & \L_{Q/\cM}
}
\end{equation}
such that the two vertical arrows are isomorphisms. We should notice that the two horizontal arrows are both functorial morphisms, 
while the left vertical arrow was constructed in the previous section and proved to be an isomorphism. It only remains to construct
the right vertical arrow to make the diagram commute, and prove that it's an isomorphism. 

Before we get into the main business, we need to study the fiber product of the quotient map $q:Q\lra\cM$ with itself. More precisely, we have
the following lemma: 

\begin{lemma}
The following diagram commutes: 
\begin{equation}\label{_fiber_product_}
\xymatrix{
G\times Q \ar@/^/[rrd]^m \ar@/_/[rdd]_{pr_2} \ar[rd]_j & & \\
 & Q\times_{\cM}Q \ar[r]_{p_1} \ar[d]^{p_2} & Q \ar[d]_q\\
 & Q \ar[r]^q & \cM,
}
\end{equation}
where $G$ is the gauge group $PGL(N)$, whose action on the Quot scheme $Q$ is the upper horizontal arrow $m$, $pr_i$ is the projection
from $G\times Q$ to the $i$-th factor, $p_i$ is the projection from $Q\times_{\cM} Q$ to the $i$-th factor, and $j=(m, pr_2)$. Moreover, 
$j$ is an isomorphism of schemes. 
\end{lemma}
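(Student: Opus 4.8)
The plan is to establish both assertions through the functor of points, using the standard description of the $2$-fibre product of stacks together with the fact that the atlas $q:Q\lra\cM$ of a global quotient stack $[Q/G]$ is a $G$-torsor. Recall that for a scheme $T$, an object of $\cM(T)=[Q/G](T)$ consists of a $G$-torsor $P\lra T$ together with a $G$-equivariant morphism $P\lra Q$, and that $q$ sends a point $a:T\lra Q$ to the trivial torsor $G\times T$ equipped with the equivariant map $(h,t)\mapsto m(h,a(t))$ induced by $a$ and the action $m$. The entire proof is then an exercise in unwinding these definitions.

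For the commutativity, I would first check that $j=(m,pr_2)$ genuinely factors through $Q\times_{\cM}Q$: this amounts to the $2$-commutativity $q\circ m\cong q\circ pr_2$ of morphisms $G\times Q\lra\cM$, which is precisely the $G$-invariance of $q$ (an object of $[Q/G]$ and its translate under the action are canonically isomorphic). Granting this, the identities $p_1\circ j=m$ and $p_2\circ j=pr_2$ follow from the definition of $j$ and the universal property of the $2$-fibre product, and the two outer triangles through $q$ commute by construction; hence diagram \ref{_fiber_product_} commutes.

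For the isomorphism, I would compare functors of points and invoke Yoneda. A $T$-point of $Q\times_{\cM}Q$ is a triple $(a,b,\phi)$ with $a,b:T\lra Q$ and $\phi$ a $2$-isomorphism from $q\circ a$ to $q\circ b$ in $\cM(T)$. Since both $q\circ a$ and $q\circ b$ are trivial torsors, $\phi$ is an isomorphism of trivial $G$-torsors over $T$, hence right multiplication by a unique $g:T\lra G$, and compatibility of $\phi$ with the equivariant maps to $Q$ translates into the relation $a=m(g,b)$. This is exactly the datum of a $T$-point $(g,b)$ of $G\times Q$, and under it the induced point of $Q\times_{\cM}Q$ is $(m(g,b),b)=j(g,b)$. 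Thus $j$ induces a functorial bijection on $T$-points, so it is an isomorphism; and since $q$ is a $G$-torsor, $p_2$ is representable, which shows that $Q\times_{\cM}Q$ is a scheme and $j$ an isomorphism of schemes.

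The step I expect to be the main obstacle is the bookkeeping in this last paragraph: one must correctly identify the automorphisms of the trivial $G$-torsor with $G$-valued points and track the action conventions so that the compatibility condition emerges as $a=m(g,b)$ rather than an opposite or inverse version. Everything else is formal, and indeed the lemma is just the concrete manifestation of the general principle that $G\times Q\rightrightarrows Q$ (with source and target $m$ and $pr_2$) is the groupoid presentation of $[Q/G]$, the scheme $Q\times_{\cM}Q$ being its object of arrows.
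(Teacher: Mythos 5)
Your proof is correct, and it is in essence the same argument the paper relies on: the paper disposes of the commutativity as ``straightforward'' (by definition of $j$ and of the fiber product) and for the isomorphism simply cites the standard fact that $Q\times_{\cM}Q \cong G\times Q$ for a quotient stack $\cM=[Q/G]$, referring to \cite[Part I, Proposition 4.43]{Fantechi-2005}. What you have done is supply the standard torsor/functor-of-points proof of that cited fact, including the correct handling of the conventions so that the compatibility condition reads $a=m(g,b)$, so there is nothing to object to.
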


\begin{proof}
The commutativity is straightforward. In fact, the commutativity of the square is due to the fiber product, and the commutativity of the 
two triangles is due to the definition of the map $j$. And the statement that $j$ is an isomorphism is a standard fact. For example, 
see \cite[Part I, Proposition 4.43]{Fantechi-2005}.
\end{proof}

We will also need the following two facts related to the fiber product described in diagram \ref{_fiber_product_}.

\begin{lemma}\label{_cotangent_group_}
Notations are the same as above. Then we have $$m^*\L_{Q/\cM}=pr_1^*\L_G.$$
\end{lemma}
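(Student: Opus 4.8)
The plan is to identify both sides of the claimed equality with the relative cotangent complex of the projection $pr_2 \colon G\times Q \lra Q$, using the base change property of the cotangent complex together with the fact that $j$ is an isomorphism. The key geometric input is that $q \colon Q \lra \cM = [Q/G]$ is the structure map to a global quotient stack, hence a $G$-torsor, and in particular smooth, surjective and flat.

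First I would apply base change to the cartesian square in the lower right corner of diagram \ref{_fiber_product_}. Reading $p_2$ as the base change of $q$ along $q$ itself, flatness of $q$ makes the square well behaved for the cotangent complex, so the natural map is a quasi-isomorphism
$$
p_1^*\L_{Q/\cM} \stackrel{\cong}{\lra} \L_{(Q\times_{\cM}Q)/Q},
$$
where the right hand side is the cotangent complex relative to $p_2$. Next I would transport this along $j$. Since $j$ is an isomorphism of schemes with $p_1\circ j = m$ and $p_2\circ j = pr_2$, the transitivity triangle of Proposition \ref{_cotangent_functorial_} applied to $G\times Q \stackrel{j}{\lra} Q\times_{\cM}Q \stackrel{p_2}{\lra} Q$ collapses, because $\L_{(G\times Q)/(Q\times_{\cM}Q)}=0$ for the isomorphism $j$, giving $\L_{(G\times Q)/Q}\cong j^*\L_{(Q\times_{\cM}Q)/Q}$ relative to $pr_2$. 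Composing with the previous identification yields $\L_{(G\times Q)/Q}\cong j^*p_1^*\L_{Q/\cM}=m^*\L_{Q/\cM}$.

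Finally, the projection $pr_2\colon G\times Q \lra Q$ exhibits $G\times Q$ as $G\times_{\spec\C}Q$, so a second (and now unconditional) base change along the structure map $Q\lra\spec\C$ gives $\L_{(G\times Q)/Q}\cong pr_1^*\L_{G/\C}=pr_1^*\L_G$. Chaining the two computations of $\L_{(G\times Q)/Q}$ produces the desired identity $m^*\L_{Q/\cM}\cong pr_1^*\L_G$.

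The main obstacle is justifying the base change isomorphism in the first step: one must know that the relative cotangent complex commutes with pullback along the atlas $q$, which rests precisely on the flatness (indeed smoothness) of $q\colon Q\lra\cM$, and one must keep careful track of which of the two projections $p_1,p_2$ plays the role of the base-changed morphism, since the two legs of the fiber product enter asymmetrically. Everything after that is a formal manipulation of transitivity triangles and the isomorphism $j$.
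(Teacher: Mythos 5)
Your proposal is correct and follows essentially the same route as the paper: the paper applies the base change property for cotangent complexes (citing \cite[Theorem 17.3 (4)]{Laumon-2000}, using smoothness of $q$) directly to the outer square of diagram \ref{_fiber_product_}, which is cartesian precisely because $j$ is an isomorphism, and then identifies $\L_{(G\times Q)/Q}$ with $pr_1^*\L_G$ via the product structure. Your version merely splits this into base change on the inner square followed by transport along $j$, which is the same argument in two steps rather than one.
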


\begin{proof}
From the above lemma we know that the outer square of the diagram \ref{_fiber_product_} is also a fiber product. 
Since the quotient map $q$ is smooth, we apply \cite[Theorem 17.3 (4)]{Laumon-2000} and get
$$m^*\L_{Q/\cM}=\L_{G\times Q/Q}=pr_1^*\L_G,$$
which proves the claim.
\end{proof}

\begin{lemma}\label{_cotangent_factor_}
Notations are the same as above. Let $$\L_Q \lra \L_{Q/\cM}$$ be the canonical map induced by the right vertical arrow $q$,  
and $$m^*\L_Q \lra m^*\L_{Q/\cM}$$ be its pullback via the multiplication. Then we have the following commutative diagram
\begin{equation}
\xymatrix{
m^*\L_Q           \ar[r] \ar[d]        &              \L_{G\times Q}      \ar[d]      \\
m^*\L_{Q/\cM}  \ar[r]^{\cong}                &              pr_1^*\L_G
}
\end{equation}
where the upper horizontal arrow is the funtorial map induced by the multiplication map $m$, the lower horizontal arrow is the one 
constructed in Lemma \ref{_cotangent_group_}, and the right vertical arrow is the projection into the first factor. 
\end{lemma}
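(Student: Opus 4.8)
The plan is to reduce the commutativity of the square to a statement in the abelian category of coherent sheaves, where it can be checked by the ordinary functoriality of K\"ahler differentials, and then to match the four derived maps with their sheaf-level incarnations. First I would record what the four arrows are. The upper horizontal arrow is the codifferential $m^*\L_Q \lra \L_{G\times Q}$, i.e. the first map in the transitivity triangle of $m$ over the base field. The right vertical arrow is the second map in the transitivity triangle of the projection $pr_2\colon G\times Q \lra Q$, namely $\L_{G\times Q}\lra \L_{(G\times Q)/Q}=pr_1^*\L_G$, which under the product splitting $\L_{G\times Q}\cong pr_1^*\L_G\oplus pr_2^*\L_Q$ is exactly the projection onto the first summand. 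The left vertical arrow is $m^*$ of the canonical map $\L_Q\lra\L_{Q/\cM}$ coming from the transitivity triangle of $q$, and the bottom arrow is the base-change isomorphism of Lemma \ref{_cotangent_group_}.

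The key simplification is that \emph{every} entry of the square is quasi-isomorphic to a single sheaf concentrated in degree $0$. Indeed, $G=PGL(N)$ is smooth and $q$ is smooth, so $\L_G\cong\Omega_G$ and $\L_{Q/\cM}\cong\Omega_{Q/\cM}$ are locally free in degree $0$; by Lemma \ref{_quot_lci_} we have $\L_Q\cong\Omega_Q$, and since $G\times Q$ is again a local complete intersection (smooth over the lci scheme $Q$), the lci lemma gives $\L_{G\times Q}\cong\Omega_{G\times Q}\cong pr_1^*\Omega_G\oplus pr_2^*\Omega_Q$. Because $m$ is smooth, hence flat, $m^*$ is exact and the two pulled-back terms are sheaves in degree $0$ as well. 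Therefore all four objects lie in degree $0$, and since morphisms in $D^b$ between complexes concentrated in degree $0$ are just morphisms of the underlying sheaves, it suffices to prove that the resulting square of coherent sheaves commutes; there the four arrows become, respectively, the codifferential $m^*\Omega_Q\lra\Omega_{G\times Q}$, the projection $\Omega_{G\times Q}\lra pr_1^*\Omega_G$, the canonical surjection $m^*\Omega_Q\lra m^*\Omega_{Q/\cM}$, and the base-change isomorphism $m^*\Omega_{Q/\cM}\stackrel{\cong}{\lra}pr_1^*\Omega_G$.

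To finish I would apply the functoriality of the transitivity triangle (Proposition \ref{_cotangent_functorial_}), taken in its degree-$0$ form, i.e. the classical functoriality of the relative cotangent sequence, to the commutative fiber-product square of \ref{_fiber_product_} carrying the maps $m,\ pr_2,\ q,\ q$. This produces a morphism of relative cotangent sequences whose outer rectangle is precisely the square in question, and in which the induced comparison on the relative terms is exactly the base-change map of Lemma \ref{_cotangent_group_}. A convenient bookkeeping device is the automorphism $\Phi\colon G\times Q\lra G\times Q$, $(g,x)\mapsto(g,g\cdot x)$, which satisfies $pr_2\circ\Phi=m$ and $pr_1\circ\Phi=pr_1$; via the chain rule for codifferentials it rewrites $dm^\vee$ through that of the projection $pr_2$, making it transparent that the first-factor component of $dm^\vee$ agrees with the canonical map to $\Omega_{Q/\cM}$ transported by base change.

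I expect the main obstacle to be one of \emph{identification} rather than of genuine geometry: one must verify that the arrow produced by the abstract functoriality of Proposition \ref{_cotangent_functorial_} literally coincides, as a morphism in $D^b$, with the base-change isomorphism that was constructed separately in Lemma \ref{_cotangent_group_} and used as the bottom edge, and that the right-hand arrow is genuinely the product projection and not a twist of it. Once these matchings are pinned down, commutativity is formal, since in the abelian category both composites compute the same first-factor component of the infinitesimal action $m^*\Omega_Q\lra pr_1^*\Omega_G$.
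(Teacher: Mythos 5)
Your proposal is correct and its essential step is exactly the paper's: the paper's entire proof is a one-line ``direct application of the functoriality of the transitivity sequence'' (Proposition \ref{_cotangent_functorial_}) to the square $q\circ m = q\circ pr_2$ from diagram \ref{_fiber_product_}, which is precisely your central move. Your degree-zero reduction and the explicit matching of the functoriality map with the base-change isomorphism of Lemma \ref{_cotangent_group_} are just scaffolding making that one line rigorous, not a different route.
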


\begin{proof}
This is a direct application of the functoriality of the transitivity sequence in Lemma \ref{_cotangent_functorial_}.
\end{proof}

\begin{lemma}\label{_sheaf_identify_}
Notations are the same as above. Recall that $\cF$ is the universal quotient sheaf on $Q\times X$. Then we have 
$$p_1^*\cF=p_2^*\cF.$$ By further pulling back via the isomorphism $j$, we have $$m^*\cF=pr_2^*\cF.$$
\end{lemma}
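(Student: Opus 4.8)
The plan is to deduce both identities from a single fact: the universal quotient sheaf $\cF$ is $G$-equivariant, and hence descends along $q\colon Q\lra\cM$ to a universal sheaf $\cF_\cM$ on $\cM\times X$ with $(q\times\textrm{id}_X)^*\cF_\cM=\cF$. The two displayed equalities are then just two readings of this descent datum off the diagram \ref{_fiber_product_}. The equivariance is built into the GIT construction: the $G$-action on $Q$ is induced by the action of the gauge group on the fixed bundle $\cE=\pi_X^*E_0$ by automorphisms, and such an automorphism sends a quotient $[\cE\lra F]$ to a quotient with the \emph{same} sheaf $F$ and only a reparametrized surjection. Since $\cE$ is pulled back from $X$, both $m^*\cE$ and $pr_2^*\cE$ are canonically the pullback of $E_0$ to $(G\times Q)\times X$, and the tautological element of the group, lifted to $GL(N)$, furnishes an automorphism of this common bundle taking the universal subsheaf $m^*\cK$ onto $pr_2^*\cK$; passing to cokernels produces the isomorphism $m^*\cF\cong pr_2^*\cF$.

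Granting the descent, $p_1^*\cF=p_2^*\cF$ becomes formal. The commuting square in \ref{_fiber_product_} gives $q\circ p_1=q\circ p_2$, so for $i=1,2$ we have $p_i^*\cF=((q\circ p_i)\times\textrm{id}_X)^*\cF_\cM$, which is manifestly independent of $i$. For the second identity I would then pull back along the isomorphism $j$: the two commuting triangles in \ref{_fiber_product_} read $p_1\circ j=m$ and $p_2\circ j=pr_2$, so applying $(j\times\textrm{id}_X)^*$ to $p_1^*\cF=p_2^*\cF$ yields precisely $m^*\cF=pr_2^*\cF$. Because $j$ is an isomorphism the two statements are in fact equivalent, and no information is lost in either direction.

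The step I expect to require the most care is the linearization of $\cF$ itself. The natural automorphisms of $\cE$ are parametrized by $GL(N)$ rather than by the gauge group $G=PGL(N)$, and the central $\C^*\subset GL(N)$ acts trivially on $Q$ while acting on $\cF$ by a nontrivial scaling; one must therefore make sure the descent datum is well defined over $G$, not merely over $GL(N)$. This is exactly the place where a universal sheaf on the stack behaves differently from one on a coarse moduli space. In the present argument it is harmless, since the identity will only be invoked inside the weight-zero combinations $\crhom(\cF,\cF)$, $\crhom(\cK,\cF)$ and $\crhom(\cE,\cF)$ appearing in \ref{_traceless_triangle_}, on which the central scaling cancels; if any residual ambiguity in linearizing $\cF$ alone remained, I would carry out the verification directly at the level of these $\crhom$-sheaves, where the equalities $m^*=pr_2^*$ hold without this difficulty.
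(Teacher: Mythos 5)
Your own third paragraph puts its finger on the problem, but the worry you raise there is not a removable technicality: it is a genuine obstruction, and it makes the descent claim of your first paragraph — and in fact the lemma as literally stated — false. Concretely, let $a,p\colon GL(N)\times Q\to Q$ be the action and the projection lifting $m$ and $pr_2$ along $GL(N)\to G=PGL(N)$. The $GL(N)$-action on $\cE=\pi_X^*E_0$ induces the tautological isomorphism $\Phi\colon p^*\cF\lra a^*\cF$, and since the centre acts on the quotient sheaf by scaling, $\Phi$ transforms with central weight one: $t_\lambda^*\Phi=\lambda\Phi$ for the translation $t_\lambda$ by a central $\lambda\in\C^*$ (which fixes both $a$ and $p$). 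Over the stable locus $Q^s$ (nonempty by the paper's standing assumption) the fibres of $\cF$ are simple, so any isomorphism $p^*\cF\to a^*\cF$ has the form $u\cdot\Phi$ with $u$ a unit on $GL(N)\times Q^s$; by Rosenlicht's theorem on units, $u=c\cdot\det^k\cdot v$ with $v$ a unit on $Q^s$, so $u$ has central weight $Nk$. An isomorphism $pr_2^*\cF\to m^*\cF$ defined over $G$ would pull back to a $t_\lambda$-invariant one, forcing $Nk=-1$, which is impossible for $N\geq 2$. So no isomorphism $m^*\cF\cong pr_2^*\cF$ exists over $G\times Q^s\times X$ at all: this is exactly the Brauer-type obstruction to descending a universal sheaf to $[Q/PGL(N)]$, and ``the tautological element of the group, lifted to $GL(N)$'' has no global lift — the local lifts give isomorphisms differing by scalars that cannot be glued.

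In fairness, the paper's own proof founders on the same rock: it identifies $Q\times_\cM Q$ with $\textrm{Isom}(\cF_1,\cF_2)$, but that identification is valid for the $GL(N)$-quotient stack; for $\cM=[Q/PGL(N)]$ one has $Q\times_\cM Q\cong PGL(N)\times Q$, which over the stable locus is the quotient of $\textrm{Isom}(\cF_1,\cF_2)$ by the fibrewise $\C^*$ of scalar automorphisms, not the Isom scheme itself. What is true — and what every later invocation of this lemma actually requires, namely the comparison of Atiyah classes in Lemma \ref{_first_commutativity_} and of Serre-duality pairings in Theorem \ref{_moduli_symplectic_stack_} — is precisely your fallback statement: the weight-zero complexes $\crhom(\cF,\cF)$, $\crhom(\cK,\cF)$ and $\crhom(\cE,\cF)$ carry canonical identifications $m^*(-)\cong pr_2^*(-)$, because the scalar ambiguity of the local $GL(N)$-lifts acts on them by conjugation and hence cancels. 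So your diagnosis of the $\C^*$-issue is exactly right, and your proposed repair (prove and use the statement at the level of these $\crhom$-complexes) is the correct one; but as a proof of the displayed equalities $p_1^*\cF=p_2^*\cF$ and $m^*\cF=pr_2^*\cF$, the proposal cannot be completed — and neither can the paper's.
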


\begin{proof}
This is a direct application of the construction of fiber products. 
Let's call $V=Q\times Q$ and denote the pullback of the universal family $\cF$ via the two projections from $V$ to $Q$ by 
$\cF_1$ and $\cF_2$. Then the fiber product $Q\times_{\cM} Q$ is defined to be 
the scheme $\textrm{Isom}(\cF_1, \cF_2)$, over which there's a canonical isomorphism from $p_1^*\cF$ to $p_2^*\cF$, 
which, by abuse of notation, was written as an equality in the lemma. 
\end{proof}

\begin{lemma}\label{_first_commutativity_}
The composition of the reduced Atiyah class and the functorial morphism of cotangent complexes factor through $\cE$. In other words, 
the dotted arrows in the following diagram exist and make the diagram commute: 
\begin{equation*}
\xymatrix{
\cK \ar@{-->}[r] \ar[d] & \cE \ar@{-->}[d]\\
\cF\otimes\pi^*\L_Q \ar[r] & \cF\otimes\pi^*\L_{Q/\cM}
}
\end{equation*}
\end{lemma}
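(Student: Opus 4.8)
The plan is to recognize that the factorization asserted by the lemma is exactly an extension problem along the inclusion $\cK\hookrightarrow\cE$, and to solve it by identifying the obstruction with a relative Atiyah class that vanishes for group-theoretic reasons. Throughout, write $\lambda\colon\L_Q\to\L_{Q/\cM}$ for the functorial map of Lemma \ref{_cotangent_factor_} and set $M:=\cF\otimes\pi^*\L_{Q/\cM}$. Since $q\colon Q\to\cM$ is smooth, $\L_{Q/\cM}=\Omega_{Q/\cM}$ is a locally free sheaf in degree $0$, so $M$ is an honest sheaf and the composition in question, $f:=(\mathrm{id}_\cF\otimes\pi^*\lambda)\circ at\colon\cK\to M$, is a genuine sheaf map. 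The top dotted arrow is just the inclusion $\iota\colon\cK\hookrightarrow\cE$ from \ref{_universal_quotient_}, and producing the right dotted arrow $\cE\to M$ is the same as factoring $f$ through $\iota$.

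First I would reduce everything to a single vanishing. Applying $\hom_{Q\times X}(-,M)$ to the universal sequence \ref{_universal_quotient_} yields the exact piece $\hom(\cE,M)\xrightarrow{\iota^*}\hom(\cK,M)\xrightarrow{\delta}\ext^1(\cF,M)$, so $f$ factors through $\cE$ if and only if $\delta(f)=0$. The connecting map $\delta$ is cup product with the extension class $e\in\ext^1(\cF,\cK)$ of \ref{_universal_quotient_}, giving $\delta(f)=f[1]\circ e=(\mathrm{id}_\cF\otimes\pi^*\lambda)[1]\circ(at[1]\circ e)$. Here I invoke Proposition \ref{_atiyah_compatibility_}(1), which says $at[1]\circ e=At(\cF)$, and conclude $\delta(f)=(\mathrm{id}_\cF\otimes\pi^*\lambda)\circ At(\cF)$ — that is, $\delta(f)$ is precisely the Atiyah class of $\cF$ relative to $\cM$. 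So the entire lemma comes down to showing $At_{Q/\cM}(\cF)=0$.

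The heart of the argument, and the step I expect to be the main obstacle, is this vanishing, which I would prove by pulling back along the action $m\colon G\times Q\to Q$. Because the outer square of \ref{_fiber_product_} is cartesian with $m$ a base change of the smooth map $q$, I can transport $\delta(f)$ to $G\times Q$. By functoriality of the Atiyah class together with Lemma \ref{_cotangent_factor_}, the pullback $m^*\big((\mathrm{id}_\cF\otimes\pi^*\lambda)\circ At(\cF)\big)$ becomes $At(m^*\cF)$ followed by the projection $\L_{G\times Q}\to pr_1^*\L_G$ coming from the identification $m^*\L_{Q/\cM}\cong pr_1^*\L_G$ of Lemma \ref{_cotangent_group_}. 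Now Lemma \ref{_sheaf_identify_} gives $m^*\cF=pr_2^*\cF$, a sheaf pulled back from the second factor; under the splitting $\L_{G\times Q}=pr_1^*\L_G\oplus pr_2^*\L_Q$ of the product, the Atiyah class of such a pullback has trivial component along $pr_1^*\L_G$. Hence $m^*\delta(f)=0$. Finally the identity section $s=(e,\mathrm{id})\colon Q\to G\times Q$ satisfies $m\circ s=\mathrm{id}_Q$, so $s^*$ splits $m^*$ and forces $\delta(f)=0$ on $Q$ itself.

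The delicate points to get right are all in this last paragraph: the functoriality of the relative Atiyah class across the cartesian square (for which Lemma \ref{_cotangent_factor_} is exactly tailored, letting one replace $m^*\lambda$ by the projection onto the $G$-factor), and the bookkeeping of the decomposition $\L_{G\times Q}=pr_1^*\L_G\oplus pr_2^*\L_Q$ that makes the $G$-directional component of $At(pr_2^*\cF)$ vanish. Everything else — the extension-group exact sequence, the interpretation of $\delta$ as cup product with $e$, and the appeal to Proposition \ref{_atiyah_compatibility_} — is routine once $M$ is seen to be an ordinary sheaf.
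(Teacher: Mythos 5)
Your proposal is correct and follows essentially the same route as the paper's own proof: reduce the factorization to the vanishing of the composition with the extension class $e$, identify that composition with $(\mathrm{id}_\cF\otimes\pi^*\lambda)\circ At(\cF)$ via Proposition \ref{_atiyah_compatibility_}(1), pull back along $m$ using Lemmas \ref{_cotangent_group_}, \ref{_cotangent_factor_} and \ref{_sheaf_identify_} so that the class lands in the $pr_2^*\L_Q$-summand of $\L_{G\times Q}$ and its $pr_1^*\L_G$-component dies, then descend via the identity section of $m$. The only differences are cosmetic: you phrase the reduction through the long exact sequence of $\hom_{Q\times X}(-,M)$ rather than the exact triangle $\cF[-1]\to\cK\to\cF\otimes\pi^*\L_Q\to\cF\otimes\pi^*\L_{Q/\cM}$, and you note explicitly that smoothness of $q$ makes $\L_{Q/\cM}$ a sheaf, which the paper leaves implicit.
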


\begin{proof}
To prove the composition of the two solid arrows factors through $\cE$, we only need to show that the composition of the following 
three maps is a zero map:
\begin{equation*}
\xymatrix{
\cF[-1] \ar[r] & \cK \ar[d] & \\
 & \cF\otimes\pi^*\L_Q \ar[r] & \cF\otimes\pi^*\L_{Q/\cM}
}
\end{equation*}
However, by Proposition \ref{_atiyah_compatibility_}, we know that the composition of the first two maps in the above diagram is exactly the
classical Atiyah class. Therefore the problem becomes to show the composition of the classical Atiyah class and the functorial morphism
between the cotangent complexes, i.e., the following two morphism, is a zero map: 
\begin{equation}\label{_atiyah_cotangent_}
\cF[-1] \lra \cF\otimes\pi^*\L_Q \lra \cF\otimes\pi^*\L_{Q/\cM}.
\end{equation}

We can pull back the maps in equation \ref{_atiyah_cotangent_} via the multiplication $m$. If we denote $m^*\cF$ by $\tilde{\cF}$, 
by applying Lemma \ref{_cotangent_group_}, we get
\begin{equation}\label{_atiyah_cotangent_pullback_}
\tilde{\cF}[-1] \lra \tilde{\cF}\otimes m^*\L_Q \lra \tilde{\cF}\otimes pr_1^*\L_G.
\end{equation}
We will first show that the compositions of these two maps is zero. 

By Lemma \ref{_cotangent_factor_}, we can further replace the above maps into the composition of three
\begin{equation}\label{_three_composition_}
\tilde{\cF}[-1] \lra \tilde{\cF}\otimes m^*\L_Q \lra \tilde{\cF}\otimes \L_{G\times Q} \lra \tilde{\cF}\otimes pr_1^*\L_G.
\end{equation}

Since $\tilde{\cF}$ is obtained by the pullback via $m$, by the functorial property of Atiyah classes, we realized that 
the composition of the first two maps in \ref{_three_composition_} is simply the Atiyah
class of the sheaf $\tilde{\cF}$ itself! 

However, by Lemma \ref{_sheaf_identify_}, we see that the universal sheaf $\tilde{\cF}$ can also be realized as $pr_2^*\cF$, therefore 
by the functorial property again, its Atiyah class can also be viewed as the pull back of the Atiyah class of $\cF$ via the projection $pr_2$. 
In particular, it lies in the second component of 
$$\ext^1(\tilde{\cF}, \tilde{\cF}\otimes\L_{G\times Q}) = \ext^1(\tilde{\cF}, \tilde{\cF}\otimes pr_1^*\L_G) 
\oplus \ext^1(\tilde{\cF}, \tilde{\cF}\otimes pr_2^*\L_Q).$$
Therefore its projection into the first factor is 0, which implies the composition of the three maps in \ref{_three_composition_} is 0. 

Now we define the diagonal map 
\begin{eqnarray*}
\Delta: Q &\lra& G\times Q,\\
q &\longmapsto& (1,q)
\end{eqnarray*}
and it's easy to see that the composition $m \circ \Delta$ is the identity
map on $Q$. Because of this, we can pull back the maps in \ref{_atiyah_cotangent_pullback_} via the map $\Delta$ and 
obtain the original maps in \ref{_atiyah_cotangent_}. The above discussion implies that the composition in 
\ref{_atiyah_cotangent_} is 0. Therefore the dotted arrows in the statement exist and make the whole diagram commutative, where 
the horizontal dotted arrow is simply the one in the exact sequence of the universal quotient over the Quot scheme $Q$. 
\end{proof}

We can translate the above lemma into the language of cotangent complexes as follows. 

\begin{lemma}
We have the commutative diagram
\begin{equation}\label{_trace_commute_}
\xymatrix{
\rpils\crhom(\cK,\cF)^\vee    \ar[r]\ar[d]          &              \rpils\crhom(\cE,\cF)^\vee      \ar[d]           \\
\L_Q                                     \ar[r]                   &              \L_{Q/\cM},
}
\end{equation}
where the upper horizontal arrow is the natural map from the universal quotient, the lower horizontal arrow is the functorial map
given by the quotient, and the left vertical arrow is given by the reduced Atiyah class. 
\end{lemma}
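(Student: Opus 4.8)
The plan is to obtain the square \ref{_trace_commute_} by transporting the commutative square of sheaf morphisms produced in Lemma \ref{_first_commutativity_} through the chain of duality isomorphisms used in Lemma \ref{_reduced_atiyah_map_}. First I would record that these isomorphisms assemble into a natural isomorphism of bifunctors: for a coherent sheaf $M$ on $Q\times X$ flat over $Q$ and a perfect complex $N$ on $Q$, the tensor-hom adjunction, the $(\pi^*,\rpils)$ adjunction together with the shift $[2]$, and relative Serre duality (using $\omega_\pi\cong\cO_{Q\times X}$ from the K3 hypothesis) combine to give
$$\Phi_{M,N}:\ \hom_{Q\times X}(M,\cF\otimes\pi^*N)\ \stackrel{\cong}{\lra}\ \hom_Q(\rpils\crhom(M,\cF)^\vee,N),$$
natural contravariantly in $M$ and covariantly in $N$. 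Applying $\Phi_{\cK,\L_Q}$ to the reduced Atiyah class recovers the left vertical arrow $\gamma$ of Lemma \ref{_reduced_atiyah_map_}. I would then \emph{define} the right vertical arrow as $\Phi_{\cE,\L_{Q/\cM}}$ applied to the dotted morphism $\cE\to\cF\otimes\pi^*\L_{Q/\cM}$ furnished by Lemma \ref{_first_commutativity_}; note that both $\cK$ and $\cE$ are flat over $Q$, so $\Phi$ applies to each.

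With the right vertical arrow in place, the commutativity of \ref{_trace_commute_} is a diagram chase. Denote by $\iota:\cK\to\cE$ the inclusion from the universal quotient \ref{_universal_quotient_}, by $at$ the reduced Atiyah class, by $b:\cE\to\cF\otimes\pi^*\L_{Q/\cM}$ the dotted map, and by $\theta:\cF\otimes\pi^*\L_Q\to\cF\otimes\pi^*\L_{Q/\cM}$ the map induced by the functorial morphism $\L_Q\to\L_{Q/\cM}$. Lemma \ref{_first_commutativity_} is precisely the identity $b\circ\iota=\theta\circ at$ in $\hom_{Q\times X}(\cK,\cF\otimes\pi^*\L_{Q/\cM})$. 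Starting from $b$, the right vertical arrow is $r:=\Phi_{\cE,\L_{Q/\cM}}(b)$; precomposing with the top horizontal arrow and invoking naturality of $\Phi$ in its first variable along $\iota$ rewrites $r\circ(\text{top})$ as $\Phi_{\cK,\L_{Q/\cM}}(b\circ\iota)$. Substituting $b\circ\iota=\theta\circ at$ and then using naturality of $\Phi$ in its second variable along $\L_Q\to\L_{Q/\cM}$ rewrites this as the composition of $\gamma=\Phi_{\cK,\L_Q}(at)$ with the bottom horizontal arrow. The resulting equality of the two routes around the square is exactly the asserted commutativity.

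The main obstacle is not the chase, which is formal once naturality is granted, but verifying that the chain $\Phi_{M,N}$ really is natural as a transformation of functors rather than merely a pointwise isomorphism. The delicate step is the relative Grothendieck-Verdier/Serre duality identification $\rpils\crhom(\cF,-)[2]\cong\rpils\crhom(-,\cF)^\vee$: I would check that it is induced by a genuine duality functor, so that it is compatible both with the morphism $\crhom(\cF,\iota)$ that builds the top arrow and with postcomposition by $\theta$ in the second slot. The trivialization $\omega_\pi\cong\cO_{Q\times X}$ removes the twist and keeps the bookkeeping manageable, but the naturality of relative duality in both entries is the point that must be confirmed with care before the two naturality squares above may be used.
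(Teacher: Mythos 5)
Your proposal follows exactly the paper's own argument: the paper likewise defines the right vertical arrow by applying the Grothendieck--Verdier/Serre duality isomorphisms (the ones from Lemma \ref{_reduced_atiyah_map_}) to the dotted arrow of Lemma \ref{_first_commutativity_}, and then transports the commutativity statement across these canonical identifications. Your version is simply more explicit about the bifunctorial naturality of the duality isomorphism, which the paper uses implicitly under the phrase ``by the above canonical isomorphisms.''
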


\begin{proof}
This is just a literal translation of lemma \ref{_first_commutativity_}. In the proof of Lemma \ref{_reduced_atiyah_map_}, we use the 
Grothendieck-Verdier duality and Serre duality to construct a canonical isomorphism (by abuse of notation we simply use
equalities)
$$\hom_{Q\times X}(\cK, \cF\otimes\pi^*\L_Q)=\hom_Q(\rpils\crhom(\cK, \cF)^\vee, \L_Q).$$
Following exactly the same steps, we can construct another canonical isomorphism
$$\hom_{Q\times X}(\cE, \cF\otimes\pi^*\L_{Q/\cM})=\hom_Q(\rpils\crhom(\cE, \cF)^\vee, \L_{Q/\cM}).$$
If we denote the two vertical arrows in Lemma \ref{_first_commutativity_} by $u$ and $v$, which are elements of the spaces
on the left hand side of the two equations respectively. We denote the corresponding elements on the right hand side 
by $u'$ and $v'$. 

The previous lemma claims that, the composition of $u$ with the canonical map $\L_Q\lra \L_{Q/\cM}$
agrees with the precomposition of $v$ with the map $\cK\lra\cE$ in the universal quotient sequence \ref{_universal_quotient_}. 
Therefore, by the above canonical isomorphisms, we know that, the composition of $u'$ with the canonical map $\L_Q\lra \L_{Q/\cM}$
also agrees with the precomposition of $v'$ with the map $\cK\lra\cE$ in the universal quotient sequence \ref{_universal_quotient_},
which is exactly the conclusion of this lemma. 
\end{proof}

If we compare the above result with the one we stated at the beginning of the section, we need to replace the upper two complexes
by their traceless counterparts. Therefore we have the following lemma. 

\begin{lemma}
We have the commutative diagram \ref{_traceless_commute_}.
\end{lemma}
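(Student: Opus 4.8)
The plan is to deduce the commutativity of \ref{_traceless_commute_} from the already-established square \ref{_trace_commute_} of the previous lemma, by tracking how each of the four arrows interacts with the trace decomposition \ref{_middle_decomposition_}. Write $\rho\colon \rpils\crhom(\cK,\cF)^\vee \to \rpils\crhom(\cK,\cF)_0^\vee$ for the projection coming from the dual of \ref{_KF_traceless_}, and $\iota\colon \rpils\crhom(\cE,\cF)_0^\vee \to \rpils\crhom(\cE,\cF)^\vee$ for the inclusion coming from the dual of \ref{_EF_traceless_}. I would define the right vertical arrow of \ref{_traceless_commute_} to be the composite of $\iota$ with the right vertical arrow $\rpils\crhom(\cE,\cF)^\vee \to \L_{Q/\cM}$ of \ref{_trace_commute_}. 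The left vertical arrow is $\gamma_0$, which by its construction as a lift satisfies $\gamma = \gamma_0 \circ \rho$, and the top arrow $f_0\colon \rpils\crhom(\cK,\cF)_0^\vee \to \rpils\crhom(\cE,\cF)_0^\vee$ is the dual of the connecting map of the traceless triangle \ref{_traceless_triangle_}, while the untraceless top arrow $f$ is the dual of the connecting map of \ref{_original_triangle_}.

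The key compatibility I need between the two top rows is the identity $\iota \circ f_0 \circ \rho = f$. Dualizing, this amounts to the statement that the connecting map of the original triangle \ref{_original_triangle_} factors through the connecting map of the traceless triangle \ref{_traceless_triangle_} via the structural maps that relate the two triples of complexes. This is precisely the compatibility of connecting maps encoded in the octahedral construction which produced \ref{_traceless_triangle_} out of \ref{_original_triangle_} and the splitting \ref{_middle_decomposition_}. I expect verifying this cleanly — following the three summands of \ref{_middle_decomposition_} through the octahedral diagrams, then dualizing — to be the main obstacle of the proof; everything else is formal.

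Granting this identity, commutativity follows by a cancellation argument. Precomposing either leg of the traceless square with $\rho$ and using $\gamma = \gamma_0 \circ \rho$ together with $\iota \circ f_0 \circ \rho = f$, both legs collapse onto the corresponding legs of \ref{_trace_commute_}, which agree by the previous lemma; hence the two legs of \ref{_traceless_commute_} agree after precomposition with $\rho$. To cancel $\rho$ it suffices that the induced map $\hom_Q(\rpils\crhom(\cK,\cF)_0^\vee, \L_{Q/\cM}) \to \hom_Q(\rpils\crhom(\cK,\cF)^\vee, \L_{Q/\cM})$ be injective. Since the cone of $\rho$ is $\textrm{R}^2\pi_*\cO_{Q\times X}[2]$, the obstruction to this injectivity lies in $\hom_Q(\textrm{R}^2\pi_*\cO_{Q\times X}[2], \L_{Q/\cM}) = \ext_Q^{-2}(\cO_Q, \L_{Q/\cM})$. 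Because $q$ is smooth, $\L_{Q/\cM}$ is a locally free sheaf concentrated in degree $0$ (compatibly with Lemma \ref{_cotangent_group_}), so this group vanishes for degree reasons, exactly as in the lemma lifting $\gamma$ to $\gamma_0$. This yields the desired commutativity.

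Finally, although the lemma only asserts commutativity, the isomorphism property anticipated at the start of the section would follow in the same breath: using Lemma \ref{_fiber_cohomology_} together with the vanishing of $\ext^{i}(E_p,F_p)$ for $i\neq 0$, one checks that $\rpils\crhom(\cE,\cF)_0^\vee$ is a locally free sheaf in degree $0$ of rank equal to $\dim G$, matching $\L_{Q/\cM}$. Combined with the commutative square just obtained and the fact that $\gamma_0$ is an isomorphism by Theorem \ref{_quot_cotangent_complex_}, the right vertical arrow is then forced to be an isomorphism as well.
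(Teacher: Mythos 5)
Your overall architecture is viable, and one piece of it is genuinely solid: the cancellation mechanism. Precomposing both legs of \ref{_traceless_commute_} with $\rho$ and then cancelling $\rho$ because the kernel of $-\circ\rho$ is the image of $\hom_Q(\textrm{R}^2\pi_*\cO_{Q\times X}[2],\L_{Q/\cM})=\ext^{-2}_Q(\cO_Q,\L_{Q/\cM})=0$ (with $\L_{Q/\cM}$ a locally free sheaf in degree $0$, $q$ being smooth) is correct, and in fact it makes explicit a uniqueness point that the paper's own proof glosses over when it says the corners of \ref{_trace_commute_} can simply be ``replaced'' by their traceless counterparts. Your definition of the right vertical arrow as the composite of $\iota$ with the right vertical arrow of \ref{_trace_commute_} also agrees with the paper.

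However, there is a genuine gap: the identity $\iota\circ f_0\circ\rho=f$, which you correctly identify as the crux, is never proved. You appeal to ``the compatibility of connecting maps encoded in the octahedral construction,'' but that is a statement of intent, not an argument: the octahedral axiom asserts existence of maps completing certain diagrams, and whether the particular connecting map of \ref{_traceless_triangle_} produced in Section 3 (via two applications of the splitting lemma) satisfies this factorization has to be tracked through that construction, which you defer. The paper sidesteps this entirely by not taking the top arrow from the triangle \ref{_traceless_triangle_} at all: it manufactures the top arrow of \ref{_traceless_commute_} from the top arrow $f$ of \ref{_trace_commute_}, first factoring $f$ through $\rho$ (degree reasons: the targets are sheaves concentrated in degree $0$, so maps from $\cO_Q[1]$ vanish), and then lifting the result through $\iota$. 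That lifting step is where the non-formal input enters: one must show the composite $\rpils\crhom(\cK,\cF)^\vee \to \rpils\crhom(\cE,\cF)^\vee \to \cO_Q$ vanishes, and the paper proves this geometrically, dualizing to $\cO_Q \to \rpils\crhom(\cE,\cF) \to \rpils\crhom(\cK,\cF)$ and checking fiberwise at closed points of the dense stable locus $Q^s$, where it becomes $\hom(F_p,F_p)\to\hom(E_p,F_p)\to\hom(K_p,F_p)$, zero by the long exact sequence of the quotient. Your proposal contains no substitute for this input; everything in it that is actually carried out is, as you say yourself, formal. (To be fair, the identification of the paper's lifted arrow with the map of \ref{_traceless_triangle_} is implicitly needed later, in Theorem \ref{_stack_cotangent_complex_}, so the compatibility you isolate is a real issue for the paper as well --- but as a proof of this lemma, your attempt postpones precisely the step that carries the mathematical weight.)
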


\begin{proof}
From the discussion in section 3 we had the following two exact triangles, 
which are the dual of the exact triangles \ref{_EF_traceless_} and \ref{_KF_traceless_}:
\begin{equation}\label{_KF_dual_traceless_}
\cO_Q[1] \lra \rpils\crhom(\cK,\cF)^\vee \lra \rpils\crhom(\cK,\cF)_0^\vee
\end{equation}
and
\begin{equation}\label{_EF_dual_traceless_}
\rpils\crhom(\cE,\cF)_0^\vee \lra \rpils\crhom(\cE,\cF)^\vee \lra \cO_Q.
\end{equation}

First of all we claim that both arrows coming out of $\rpils\crhom(\cK,\cF)^\vee$ in \ref{_trace_commute_} factor through 
$\rpils\crhom(\cK,\cF)_0^\vee$. For this purpose it suffices to show that the pre-composition of these two arrows by the 
first arrow in \ref{_KF_dual_traceless_} is 0. In fact, since $\cO_Q[1]$ is a single sheaf lying in degree $-1$, while both 
$\L_Q$ and $\rpils\crhom(\cK,\cF)^\vee$ 
are both single sheaves lying in degree 0, there is only the zero map from a sheaf in degree $-1$ to a sheaf in degree 0. 
This allows us to replace the upper left corner of \ref{_trace_commute_} by its traceless counterpart. 

Next we claim that the upper horizontal arrow in \ref{_trace_commute_} can be lifted to $\rpils\crhom(\cE,\cF)_0^\vee$. 
For this we only need to show, that the composition of this arrow with the second arrow in \ref{_EF_dual_traceless_}
$$\rpils\crhom(\cK,\cF)^\vee \lra \rpils\crhom(\cE,\cF)^\vee \lra \cO_Q$$ is a zero map, or equivalently, its dual composition
$$\cO_Q \lra \rpils\crhom(\cE,\cF) \lra \rpils\crhom(\cK,\cF)$$ is a zero map on $Q$. 
Since we assume that there is at least one stable sheaf in the moduli space, the stable locus $Q^s$ in the Quot scheme is open
and dense. Therefore it suffices to check the above claim at every closed point in $Q^s$. 

Pick any closed point $p\in Q^s$. By the construction of the traceless complexes, the restriction of the above two maps at $p$
becomes
$$\hom(F_p,F_p) \lra \hom(E_p,F_p) \lra \hom(K_p,F_p),$$ whose composition of 0, as expected. Therefore we can as well
replace the upper right corner of \ref{_trace_commute_} by its traceless counterpart and obtain the commutative diagram 
\ref{_traceless_commute_}. 
\end{proof}

Finally we are aiming to prove that the right vertical map in \ref{_traceless_commute_} is a quasi-isomorphism, or more precisely, 
an isomorphism between two single sheaves in degree 0. First we compute the two sheaves explicitly to see if they have a chance 
to be isomorphic. 

\begin{lemma}
Both $\rpils\crhom(\cE,\cF)_0^\vee$ and $\L_{Q/\cM}$ are quasi-isomorphic to a trivial vector bundle of rank equal to $\dim G$
concentrated in degree 0.
\end{lemma}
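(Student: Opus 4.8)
The plan is to treat the two sheaves separately, since the statement only asks that each be a trivial bundle of rank $\dim G$ in degree $0$; their identification with one another is the content of the next step (the right vertical arrow of \ref{_traceless_commute_}), not of this lemma. For $\L_{Q/\cM}$ I would argue as follows. The quotient map $q\colon Q\to\cM=[Q/G]$ is smooth (as already used in the proof of Lemma \ref{_cotangent_group_}), so $\L_{Q/\cM}$ is concentrated in degree $0$ and locally free. To see that it is \emph{trivial} of the correct rank, I would invoke Lemma \ref{_cotangent_group_}, which gives $m^*\L_{Q/\cM}=pr_1^*\L_G$. Since $G=PGL(N)$ is a smooth algebraic group, $\L_G\cong\Omega_G\cong\cO_G\otimes_{\C}\fg^\vee$ is trivial of rank $\dim G=N^2-1$. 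Pulling back along the section $\Delta\colon Q\to G\times Q$, $q\mapsto(1,q)$, and using $m\circ\Delta=\textrm{id}_Q$, one gets $\L_{Q/\cM}=\Delta^*m^*\L_{Q/\cM}=\Delta^*pr_1^*\L_G=(pr_1\circ\Delta)^*\L_G$; as $pr_1\circ\Delta$ is the constant map to $1\in G$, this is $\cO_Q\otimes_{\C}\fg^\vee$, trivial of rank $\dim G$.

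For $\rpils\crhom(\cE,\cF)_0^\vee$ I would exploit that $\cE=\pi_X^*E_0$ is a \emph{trivial} family, with $E_0=V\otimes_{\C}H^{-m}$ and $V=\C^N$ for $m\gg0$. Then $\crhom(\cE,\cF)=V^\vee\otimes\pi_X^*H^{m}\otimes\cF$, so for $m$ large in the GIT construction the higher direct images vanish (each fibre $F_p\otimes H^m$ is $m$-regular) and $\rpils\crhom(\cE,\cF)=V^\vee\otimes\pi_*(\cF\otimes\pi_X^*H^m)$ sits in degree $0$. The key input from the Quot scheme construction is that the universal quotient induces an isomorphism $V\otimes\cO_Q\xrightarrow{\cong}\pi_*(\cF\otimes\pi_X^*H^m)$: both are locally free of rank $N=P(m)$, and the map is a fibrewise isomorphism on the locus where $V\cong H^0(F_p\otimes H^m)$, which is precisely the locus $Q$ we work on. Hence $\rpils\crhom(\cE,\cF)\cong\textrm{End}(V)\otimes\cO_Q$, trivial of rank $N^2$, and under this trivialization the map $\alpha$ of \ref{_EF_traceless_} is the constant section $\textrm{id}_V$. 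Its cone is therefore the cokernel $(\textrm{End}(V)/\C\cdot\textrm{id}_V)\otimes\cO_Q=\mathfrak{pgl}(N)\otimes\cO_Q=\fg\otimes\cO_Q$, trivial of rank $N^2-1=\dim G$ in degree $0$; dualizing gives $\rpils\crhom(\cE,\cF)_0^\vee=\fg^\vee\otimes\cO_Q$. This is consistent with the fibrewise computation of Lemma \ref{_fiber_cohomology_}(1), where $\cok(\alpha_p)=\hom(E_p,F_p)/\C\cdot(\textrm{quotient map})$.

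The main obstacle is this first half: identifying $\rpils\crhom(\cE,\cF)$ and its traceless part \emph{canonically} with trivial bundles, rather than merely with abstract bundles of the right rank. This rests on features of the specific GIT/Quot-scheme construction—that $m$ can be chosen so large that $V\to H^0(F_p\otimes H^m)$ is an isomorphism at every point of the semistable locus, so that $\pi_*(\cF\otimes\pi_X^*H^m)$ is canonically trivialized by $V$, and that the unit/trace section $\alpha$ goes over to the constant endomorphism $\textrm{id}_V$. Matching the abstractly defined traceless complex with the concrete quotient $\mathfrak{gl}(N)\to\mathfrak{pgl}(N)$ is where the care is needed; by contrast the $\L_{Q/\cM}$ statement is a formal consequence of the lemmas already established.
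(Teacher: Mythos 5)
Your proposal is correct and follows essentially the same route as the paper: the identification $\rpils\crhom(\cE,\cF)\cong\cend(\cO_Q^{\oplus N})$ via the Quot-scheme trivialization $\cE(n)\cong\cO^{\oplus N}$ and $\pi_*\cF(n)\cong\cO_Q^{\oplus N}$, with the traceless part as the cokernel of the (fiberwise injective) identity section, and the computation $\L_{Q/\cM}=\Delta^*m^*\L_{Q/\cM}=\Delta^*pr_1^*\L_G=\fg\otimes\cO_Q$ using $m\circ\Delta=\textrm{id}$ and Lemma \ref{_cotangent_group_}. You merely spell out two points the paper leaves implicit --- that the trivialization of $\pi_*\cF(n)$ by $V$ is canonical on the GIT-semistable locus, and that $\alpha$ becomes the constant section $\textrm{id}_V$ --- which is a welcome refinement, not a different argument.
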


\begin{proof}
We recall the construction of the Quot scheme. There exists a sufficient large integer $n$, such that for every semistable sheaf $F$ with 
the prescribed Mukai vector, $F\otimes \cO(n)$ has trivial cohomology in positive degrees, and $E\otimes cO(n)$ is the trivial bundle 
generated by the global sections of $F\otimes \cO(n)$. Assuming the dimension of the global sections is $N$, then the gauge group 
$G=PGL(N)$. Therefore we have 
\begin{eqnarray*}
\rpils\crhom(\cE,\cF) &=& \rpils\crhom(\cE(n),\cF(n)) \\
                                 &=& \rpils\crhom(\cO^{\oplus N},\cF(n)) \\
                                 &=& \rpils\cF(n)\otimes \cO^{\oplus N \vee} \\
                                 &=& \pi_*\cF(n)\otimes \cO^{\oplus N \vee} \\
                                 &=& \cO^{\oplus N \vee} \otimes \cO^{\oplus N} = \cend(\cO^{\oplus N}).
\end{eqnarray*}
And from the construction of exact triangle \ref{_EF_traceless_}, we see that the map $\cO_Q \lra \rpils\crhom(\cE,\cF)$ is at 
every closed point $p\in Q$ given by the identity map $$\C\textrm{Id} \hookrightarrow \hom(F_p,F_p) \lra \hom(E_p,F_p)$$  which
is injective. Therefore the exactly triangle \ref{_EF_traceless_} actually becomes an exact sequence of sheaves on $Q$
$$0 \lra \cO_Q \lra \cend(\cO_Q^{\oplus N}) \lra \cend(\cO_Q^{\oplus N})_0 \lra 0.$$ So $\rpils\crhom(\cE,\cF)_0$ is quasi-isomorphic to
$\cend(\cO_Q^{\oplus N})_0$ which is a trivial bundle of rank $N^2-1$ concentrated in degree 0. 

On the other hand, by noticing that $$m \circ \Delta = \textrm{Id},$$
together with Lemma \ref{_cotangent_group_}, we have 
$$\L_{Q/\cM} = \Delta^* m^* \L_{Q/\cM} = \Delta^* pr_1^* \L_G = \mathfrak{g} \otimes \cO_Q$$
which is also a trivial bundle of rank equal to $\dim\mathfrak{g}=N^2-1$.
\end{proof}

Finally, we are ready to prove the following lemma. 

\begin{lemma}
The right vertical arrow constructed in \ref{_traceless_commute_}
$$\varphi: \rpils\crhom(\cE,\cF)^\vee_0 \lra \L_{Q/\cM}$$
is an isomorphism of two sheaves. 
\end{lemma}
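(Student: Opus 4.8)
The plan is to exploit the explicit descriptions just obtained: both $\rpils\crhom(\cE,\cF)_0^\vee$ and $\L_{Q/\cM}$ are trivial vector bundles of rank $N^2-1$ concentrated in degree $0$. For a morphism between locally free sheaves of the same finite rank, being an isomorphism is equivalent to being an isomorphism on the fiber $\otimes\,k(p)$ at every closed point $p\in Q$: fiberwise surjectivity at all points gives surjectivity by Nakayama's lemma, and a surjection between locally free sheaves of equal rank is automatically an isomorphism. So first I would reduce the whole statement to checking that the fiber map $\varphi\otimes k(p)$ is an isomorphism of $(N^2-1)$-dimensional vector spaces at each $p\in Q$, \emph{including} the strictly semistable points. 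This last point is the crux of the reduction: $\varphi$ is a map of trivial bundles given by a matrix of regular functions, and its being an isomorphism over the dense open $Q^s$ does not by itself prevent its determinant from vanishing on the complement, so I cannot simply restrict to $Q^s$. (A diagram chase completing both rows of \ref{_traceless_commute_} to exact triangles and invoking two-out-of-three is also tempting, but the induced map on cones is $\rpils\crhom(\cF,\cF)_0^\vee\to q^*\L_\cM[1]$, whose invertibility is the very content of the main theorem of the next section, so that route is circular.)

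Next I would identify the fiber map concretely. Tracing the construction of $\varphi$ backwards: by the translation of Lemma \ref{_first_commutativity_} into \ref{_trace_commute_}, $\varphi$ is the morphism induced, via Grothendieck--Verdier and Serre duality, by the arrow $\cE\to\cF\otimes\pi^*\L_{Q/\cM}$ that factors the composite of the reduced Atiyah class with $\L_Q\to\L_{Q/\cM}$. Using Lemma \ref{_cotangent_group_}, the identification $m^*\L_{Q/\cM}=pr_1^*\L_G$, and the relation $m\circ\Delta=\textrm{id}$, this arrow is exactly the infinitesimal $G$-action on the quotient. Dualizing and restricting to the fiber at $p$, $\varphi$ becomes the map
\begin{equation*}
\rho_p:\ \fg=\cend(E_p)/\C\cdot\textrm{id}\ \lra\ \hom(E_p,F_p)/\C\cdot\pi=\hom(E_p,F_p)_0,\qquad \psi\longmapsto \pi\circ\psi,
\end{equation*}
where $\pi:E_p\to F_p$ is the universal quotient at $p$; this is precisely the derivative of the quotient along the orbit directions, and it matches the identifications $\rpils\crhom(\cE,\cF)_0\cong\cend(\cO_Q^{\oplus N})_0$ and $\L_{Q/\cM}\cong\fg\otimes\cO_Q$ of the previous lemma (note that $\rho_p$ carries $\textrm{id}$ to $\pi$, the generator being quotiented out on the right, so it indeed descends to the traceless parts).

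Finally I would verify that $\rho_p$ is an isomorphism at \emph{every} $p$. By the construction of the Quot scheme with the chosen twist $n$, one has $\cE=\pi_X^*E_0$ with $E_0=H^0(F(n))\otimes\cO_X(-n)$, so $\cend(E_p)=\cend(\C^N)$ and the quotient $\pi$ corresponds to the $N$ tautological sections $s_1,\dots,s_N$, which form a \emph{basis} of $H^0(F_p(n))$ at every closed point. Hence $\hom(E_p,F_p)=H^0(F_p(n))^{\oplus N}$, and the map $\psi=(a_{ij})\mapsto\pi\circ\psi$, sending $\psi$ to $\big(\sum_i a_{ij}s_i\big)_j$, is a linear isomorphism $\cend(\C^N)\xrightarrow{\cong}H^0(F_p(n))^{\oplus N}$. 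Passing to the quotients by $\C\cdot\textrm{id}$ and $\C\cdot\pi$ shows $\rho_p$ is an isomorphism of $(N^2-1)$-dimensional spaces, uniformly in $p$. I expect the main obstacle to be the bookkeeping in the second step --- pinning down that the fiber of the abstractly defined $\varphi$ really is the naive infinitesimal-action map $\rho_p$, through the chain of dualities and the group-theoretic identification of $\L_{Q/\cM}$ --- rather than the final linear-algebra check, which is forced by the defining property of $Q$ and works at stable and strictly semistable points alike. Combined with the fiberwise criterion and Nakayama, this yields that $\varphi$ is an isomorphism.
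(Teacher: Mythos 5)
Your overall strategy is genuinely different from the paper's, and two of its three steps are sound: the reduction to a fiberwise check at every closed point is standard, and the final linear algebra is correct (on the GIT-semistable locus the induced map $V\to H^0(F_p(n))$ is an isomorphism at \emph{every} closed point, strictly semistable ones included, so $\psi\mapsto\pi\circ\psi$ is an isomorphism $\cend(V)\to\hom(E_p,F_p)$ and descends to an isomorphism of the traceless quotients). The gap is in the middle step, which you flag as ``bookkeeping'' but which is in fact the crux, and your stated justification for it does not suffice. The paper constructs the dotted arrow $v:\cE\to\cF\otimes\pi^*\L_{Q/\cM}$ of Lemma \ref{_first_commutativity_} purely by an existence argument (a composite vanishes, hence \emph{some} lift exists); such a lift is unique only up to $\hom_{Q\times X}(\cF,\cF\otimes\pi^*\L_{Q/\cM})$, and correspondingly the commutativity of the square \ref{_traceless_commute_} pins down the dual fiber map $\fg\to\hom(E_p,F_p)_0$ only modulo maps of the form $\psi\mapsto g_\psi\circ\pi$ with $g_\psi\in\hom(F_p,F_p)$. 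Modulo the line $\C\cdot\pi$ this ambiguity is nonzero exactly when $F_p$ is not simple, i.e.\ precisely at the strictly semistable points where your argument is supposed to outperform restriction to $Q^s$. Lemma \ref{_cotangent_group_} and $m\circ\Delta=\textrm{id}$ identify $\L_{Q/\cM}$ with $\fg\otimes\cO_Q$ but do not compute which lift the paper's $\varphi$ is. To close this you would need (a) a positive proof that the naive coaction $e\mapsto(\psi\mapsto\pi(\psi\cdot e))$ is itself a valid lift, i.e.\ that its restriction to $\cK$ equals $at$ followed by $\L_Q\to\L_{Q/\cM}$ --- this is a genuine deformation-theoretic statement, provable from Proposition \ref{_deformation_reduced_atiyah_} applied to the retracts $Q[\cO_Q]\to Q$ generated by the infinitesimal $G$-action, not from the lemmas you cite; and (b) an argument that the ambiguity disappears after tracelessization, e.g.\ that $\hom_{Q\times X}(\cF,\cF)=H^0(\cO_Q)\cdot\textrm{id}$ (using flatness of $\cF$ and integrality of $Q$), so that all lifts induce the same map to $\hom(E_p,F_p)/\C\cdot\pi$; alternatively one may simply redefine the right vertical arrow to be the natural coaction lift, which is all that Theorem \ref{_stack_cotangent_complex_} needs.

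It is also worth noting that the route you dismissed is the one the paper completes, and it is much shorter. You are right that being an isomorphism on the dense open $Q^s$ does not by itself prevent $\det\varphi$ from vanishing on the complement; but the paper observes that the non-isomorphism locus of a map of locally free sheaves of equal rank is the vanishing divisor of $\det\varphi$, hence of codimension $1$ if nonempty, while it is contained in $Q\setminus Q^s$, which has codimension at least $2$ by Kaledin--Lehn--Sorger. So it is empty. The isomorphism on $Q^s$ itself is obtained from the commutative square (left vertical an isomorphism) plus surjectivity of $\L_{Q^s}\to\L_{Q^s/\cM^s}$, which reduces to freeness of the $G$-action on the stable locus. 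Your approach, once repaired as above, would have the merit of avoiding the citation of the codimension bound and of working uniformly at all points, but as written the identification of the fiber of $\varphi$ with the multiplication map is asserted rather than proved, and that is where the entire difficulty of the lemma sits.
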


\begin{proof}
From the above discussion we know that this arrow is a map between two locally free sheaves of the same rank. 

First of all we will show that, on the stable locus $Q^s$, $\varphi$ is an isomorphism. For this purpose, 
it suffices to show that $\varphi_p$ is surjective on the stable locus $Q^s$. 
However, due to the commutativity of the diagram \ref{_traceless_commute_}, whose left vertical arrow 
is an isomorphism, it suffices to show that the functorial map 
\begin{equation}\label{_cotangent_map_stable_}
\L_{Q^s} \lra \L_{Q^s/\cM^s}
\end{equation}
is surjective 
on $Q^s$, where $\cM^s$ is as a substack of $\cM$ the quotient of $Q^s$ by the group $G$. 

To show that the map \ref{_cotangent_map_stable_} is surjective, we only need to show that the pullback of the 
map via 
$$m_s: G\times Q^s \lra Q^s$$ is surjective. By applying Lemma \ref{_cotangent_group_}, we just need to
prove that $$m_s^*\L_{Q^s} \lra pr_1^*\L_G$$ is surjective. Here by abuse of notation, we use $pr_1$ for
the projection of $G\times Q^s$ to its first factor. 

Since both $Q^s$ and $G$ are smooth, we can consider the dual of the above map $$pr_1^*T_G \lra m^*T_{Q^s}.$$
We need to show that it's injective on fibers at every closed point $p\in Q^s$. Or in other words, we need to show that
the pushforward of the tangent spaces $$m_{s*}(pr_1^*T_G) \lra T_{Q^s}$$ is injective at every closed point $p\in Q^s$. 
However, this is equivalent of saying that the $G$-action is free on the stable locus $Q^s$, which is obvious. 

So far we have proved that the map $\varphi$ is an isomorphism of two locally free sheaves of the same rank on $Q^s$. 
Next we claim that $\varphi$ is actually an isomorphism over $Q$. In fact, the locus in $Q$ where $\varphi$ is not an isomorphism
is the zero locus of the corresponding map of determinant line bundles, therefore is a Cartier divisor. In particular, if it's not
an empty set, it should have dimension 1. However, by \cite[Proposition 6.1]{Kaledin-2006} that the strictly semistable locus 
$Q\backslash Q^s$ has codimension at least 2. Therefore the degeneracy locus must be empty, and $\varphi$ is 
an isomorphism everywhere. 
\end{proof}

Now we get out key result on the cotangent complex of the moduli stack. 

\begin{theorem}\label{_stack_cotangent_complex_}
We have a quasi-isomorphism $$\rpils\crhom(\cF,\cF)_0^\vee[-1] \stackrel{\cong}{\lra} q^*\L_\cM.$$
\end{theorem}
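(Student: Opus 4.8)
The plan is to combine the two main results already established—Theorem \ref{_quot_cotangent_complex_} for $\L_Q$ and the isomorphism $\varphi:\rpils\crhom(\cE,\cF)_0^\vee \to \L_{Q/\cM}$ just proved—together with the transitivity exact triangle for the quotient morphism $q:Q\to\cM$. First I would write down the transitivity triangle of Proposition \ref{_cotangent_functorial_} for the composition $Q\xrightarrow{q}\cM\to\spec\C$, which reads
\begin{equation*}
q^*\L_{\cM} \lra \L_Q \lra \L_{Q/\cM}.
\end{equation*}
The goal is to identify $q^*\L_{\cM}$ with the shifted third vertex of the traceless triangle \ref{_traceless_triangle_} dualized, namely with $\rpils\crhom(\cF,\cF)_0^\vee[-1]$, by comparing the two triangles via the commutative diagram \ref{_traceless_commute_}.

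The key steps, in order, are as follows. I would first dualize the traceless exact triangle \ref{_traceless_triangle_} to obtain an exact triangle whose two right-hand terms are $\rpils\crhom(\cK,\cF)_0^\vee$ and $\rpils\crhom(\cE,\cF)_0^\vee$ and whose remaining vertex is $\rpils\crhom(\cF,\cF)_0^\vee[-1]$. Then I would invoke the commutative square \ref{_traceless_commute_}, in which the top row is the map $\rpils\crhom(\cK,\cF)_0^\vee \to \rpils\crhom(\cE,\cF)_0^\vee$ coming from the universal quotient, the bottom row is the functorial map $\L_Q\to\L_{Q/\cM}$, and both vertical arrows $\gamma_0$ and $\varphi$ are isomorphisms (established in Theorem \ref{_quot_cotangent_complex_} and the preceding lemma respectively). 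This square says precisely that the morphism of complexes $\gamma_0\oplus\varphi$ gives a map between the two horizontal arrows that agrees on source and target up to the claimed isomorphisms. Completing both horizontal maps to exact triangles and using the axioms of a triangulated category, a map of triangles whose two relevant legs are isomorphisms induces an isomorphism on the third vertices. Hence the cone (suitably shifted) of the bottom row, which is $q^*\L_{\cM}$ by the transitivity triangle, is isomorphic to the cone of the top row, which is $\rpils\crhom(\cF,\cF)_0^\vee[-1]$.

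More precisely, I would argue that the bottom row $\L_Q\to\L_{Q/\cM}$ fits, by the transitivity triangle above, into an exact triangle with third vertex $q^*\L_{\cM}[1]$ (or with $q^*\L_{\cM}$ as the fiber), while the top row fits into the dual of \ref{_traceless_triangle_} with third vertex $\rpils\crhom(\cF,\cF)_0^\vee[-1]$. Since \ref{_traceless_commute_} exhibits a commutative square with the two vertical isomorphisms $\gamma_0$ and $\varphi$, the five lemma in its triangulated form (equivalently, the functoriality of the cone for a morphism of exact triangles) forces the induced map on the third vertices to be a quasi-isomorphism, giving exactly $\rpils\crhom(\cF,\cF)_0^\vee[-1]\xrightarrow{\cong} q^*\L_{\cM}$.

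The main obstacle I anticipate is not the formal triangulated-category manipulation but ensuring that the map of triangles is genuinely compatible—that is, that the square \ref{_traceless_commute_} really does complete to a morphism of the two exact triangles in a way that matches the transitivity triangle for $q$. This requires that the functorial map $\L_Q\to\L_{Q/\cM}$ appearing in \ref{_traceless_commute_} is literally the one occurring in the transitivity sequence, and that the identification of $q^*\L_{\cM}$ with the fiber of $\L_Q\to\L_{Q/\cM}$ is canonical; the commutativity work done in Lemmas \ref{_first_commutativity_} through the construction of \ref{_traceless_commute_} is exactly what is needed to pin this down. A secondary subtlety is the bookkeeping of the shift $[-1]$ and the direction of the connecting maps, so I would take care to track degrees and the placement of $q^*\L_{\cM}$ as the fiber rather than the cofiber of the functorial map.
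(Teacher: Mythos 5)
Your proposal is correct and follows essentially the same route as the paper: the paper's proof likewise places the dualized traceless triangle \ref{_traceless_triangle_} above the (rotated) transitivity triangle $\L_Q \to \L_{Q/\cM} \to q^*\L_{\cM}[1]$, uses the commutative square \ref{_traceless_commute_} with its two vertical isomorphisms, and invokes the triangulated-category axioms to produce the induced quasi-isomorphism on the third vertices. The only cosmetic difference is that you phrase $q^*\L_{\cM}$ as the fiber of $\L_Q \to \L_{Q/\cM}$ while the paper works with the cofiber $q^*\L_{\cM}[1]$; these differ by a rotation and your degree bookkeeping handles it correctly.
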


\begin{proof}
From the above discussion on the diagram \ref{_traceless_commute_}, and two functorial exact triangles, we obtain the following
diagram (in which the first exact triangle follows from equation \ref{_traceless_triangle_}):
\begin{equation}
\xymatrix{
\rpils\crhom(\cK,\cF)_0^\vee  \ar[r]\ar[d]     &      \rpils\crhom(\cE,\cF)_0^\vee    \ar[r]\ar[d]      &       \rpils\crhom(\cF,\cF)_0^\vee    \ar@{-->}[d]   \\
\L_Q             \ar[r]                                  &              \L_{Q/\cM}               \ar[r]                        &                q^*\L_{\cM}[1]
}
\end{equation}
Since the left square commutes, by the axioms of triangulated categories, the dotted arrow exists and is a quasi-isomorphism. 
\end{proof}



\section{Symplectic Stacks}

Motivated by the symmetric obstruction theory in \cite{Behrend-2008}, we want to study bilinear pairings on complexes. 
The following notion of anti-symmetric forms is completely parallel to \cite[Definition 1.1]{Behrend-2008}:

\begin{definition}\label{_definition_complex_}
Let $\cX$ be a scheme, and $E^\hdot\in D^b(\cX)$ be a perfect complex. A \textit{non-degenerate anti-symmetric bilinear form}
on $E^\hdot$ is a morphism $$\beta: E^\hdot\otimes E^\hdot \lra \cO_{\cX} $$ in $D^b(\cX)$, which is
\begin{enumerate}
\item anti-symmetric, i.e. the following diagram is commutative
\begin{equation}\label{_tensor_anticommutativity_}
\xymatrix{
E^\hdot \otimes E^\hdot \ar^{\beta}[r] \ar^{\iota}[d] & \cO_{\cX} \ar^{-\textrm{id}}[d] \\
E^\hdot \otimes E^\hdot \ar^{\beta}[r] & \cO_{\cX},
}
\end{equation}
where $\iota$ is the isomorphism switching the two factors of the tensor product;
\item non-degenerate, which means that $\beta$ induces an isomorphism $$\theta: E^\hdot \lra E^{\hdot \vee}.$$
\end{enumerate}
In such a case, we call $E^\hdot$ a \textit{symplectic complex} and $\beta$ a \textit{symplectic pairing} on $E^\hdot$. 
\end{definition}

\begin{remark}
Note that there are other equivalent ways of phrasing this definition (c.f. \cite[Remark 1.2]{Behrend-2008}). In fact, we can 
avoid using the tensor product and use only the isomorphism $\beta$, then the condition of anti-symmetry becomes 
$\theta^\vee=-\theta$, or more precisely, the following diagram commutes:
\begin{equation}\label{_dual_new_check_}
\xymatrix{
E^\hdot \ar^{\theta}[r] \ar^{i_E}[d] & E^{\hdot \vee} \ar^{-\textrm{id}}[d] \\
E^{\hdot \vee \vee} \ar^{\theta^\vee}[r] & E^{\hdot \vee},
}
\end{equation}
where $i$ is the naturally isomorphism of the perfect complex $E$ and its double dual. 

Similar to the situation in \cite{Behrend-2008}, it's usually easier to work with $\theta$ only. Then 
an anti-symmetric pairing on the complex $E$ is simply an isomorphism $\theta: E^\hdot\lra E^{\hdot \vee}$ satisfying $\theta^\vee=-\theta$. 
\end{remark}

\begin{remark}
Note that here we adopted the sign conventions in \cite[Section 1.3]{Conrad-2000}. The sign conventions which are most 
relevant to the above definition are the ones related to switching the two factors in a tensor product and to the identification 
of a complex with its dual. More precisely, we should keep in mind that the definition of the natural isomorphism
\begin{equation}\label{_tensor_product_commute_}
E^\hdot_1\otimes E^\hdot_2\cong E^\hdot_2\otimes E^\hdot_1
\end{equation}
uses a sign of $(-1)^{pq}$ on the component $E_1^p\otimes E_2^q$ 
\cite[page 11]{Conrad-2000}. Moreover, from the definition of $\chom$ complex in \cite[page 10]{Conrad-2000}, we can 
easily find that if $E$ is a prefect complex represented by 
$$\cdots \lra E^{i-1} \stackrel{\varphi_{i-1}}{\lra} E^i \stackrel{\varphi_i}{\lra} E^{i+1} \lra \cdots,$$ 
then the dual complex $E^\vee$ can be represented by 
$$\cdots \lra (E^{i+1})^\vee \stackrel{(-1)^i\varphi_i^\vee}{\lra} (E^i)^\vee \stackrel{(-1)^{i-1}\varphi_{i-1}^\vee}{\lra} (E^{i-1})^\vee \lra \cdots,$$ 
and the double dual complex $E^{\vee\vee}$ becomes
$$\cdots \lra E^{i-1} \stackrel{-\varphi_{i-1}}{\lra} E^i \stackrel{-\varphi_i}{\lra} E^{i+1} \lra \cdots.$$ 
Note that the extra sign is induced in all the morphisms in the complex. To get compatible with this, according to \cite[page 14]{Conrad-2000}, 
the isomorphism $i_E: E^\hdot\lra E^{\hdot\vee\vee}$ is chosen to involve a sign of $(-1)^n$ in degree $n$. 
\end{remark}

An obvious example of a symplectic complex is a single vector bundle equipped with a symplectic metric sitting in degree 0. However, 
to get a better feeling of a symplectic complex, especially the tricky sign conventions, we can see the following example: 

\begin{example}
Let $X=\C^{2n}$ with $x_1, x_2, \cdots, x_n, y_1, y_2, \cdots, y_n$ as coordinates. Let $E$ be the complex of locally free sheaves
$$\cO_X \stackrel{\alpha}{\lra} \cO^{\oplus 2n}_X \stackrel{\beta}{\lra} \cO_X,$$
where the morphisms are
\begin{eqnarray*}
\alpha &=& (x_1, \cdots, x_n, -y_1, \cdots, -y_n)^T,\\
\beta &=& (y_1, \cdots, y_n, x_1, \cdots, x_n), 
\end{eqnarray*}
where the letter ``T" in the upper right corner denotes the transpose of the matrix. 
Then the dual complex $E^\vee$ becomes
$$\cO_X \stackrel{\beta^T}{\lra} \cO^{\oplus 2n}_X \stackrel{-\alpha^T}{\lra} \cO_X,$$
and we can define a morphism $\theta: E\lra E^\vee$ by 
\begin{equation*}
\xymatrix{
\cO_X \ar^{\alpha}[r] \ar^{\textrm{id}}[d] & \cO_X^{\oplus 2n} \ar^{\beta}[r] \ar^{\gamma}[d] & \cO_X \ar^{\textrm{id}}[d]\\
\cO_X \ar^{\beta^T}[r] & \cO_X^{\oplus 2n} \ar^{-\alpha^T}[r] & \cO_X,
}
\end{equation*}
where $\gamma$ is the standard $2n\times 2n$ symplectic matrix
\begin{equation*}
\left(
\begin{array}{cc}
0 & -1_n\\
1_n & 0
\end{array}
\right).
\end{equation*}
The dual isomorphism $\theta^\vee: E^{\vee\vee}\lra E^{\vee}$ now becomes
\begin{equation*}
\xymatrix{
\cO_X \ar^{-\alpha}[r] \ar^{\textrm{id}}[d] & \cO_X^{\oplus 2n} \ar^{-\beta}[r] \ar^{\gamma^T}[d] & \cO_X \ar^{\textrm{id}}[d]\\
\cO_X \ar^{\beta^T}[r] & \cO_X^{\oplus 2n} \ar^{-\alpha^T}[r] & \cO_X,
}
\end{equation*}
We also mentioned above that the natural isomorphism $i_E:E\lra E^{\vee\vee}$ is defined to be 
\begin{equation*}
\xymatrix{
\cO_X \ar^{\alpha}[r] \ar^{-\textrm{id}}[d] & \cO_X^{\oplus 2n} \ar^{\beta}[r] \ar^{\textrm{id}}[d] & \cO_X \ar^{-\textrm{id}}[d]\\
\cO_X \ar^{-\alpha}[r] & \cO_X^{\oplus 2n} \ar^{-\beta}[r] & \cO_X.
}
\end{equation*}
The above diagrams verify the required symplectic condition in \ref{_dual_new_check_}. Therefore the complex $E$ in this example is
a symplectic complex. 
\qed
\end{example}

Now we define a symplectic complex on an algebraic stack, by using an atlas of a stack. 

\begin{definition}\label{_bad_definition_for_stack_}
Let $\cX$ is an algebraic stack, and let $u:U\lra \cX$ be an atlas of the stack $\cX$, where $U$ is a scheme. Let 
$\mathcal{G}\in D^b(\cX)$ be a perfect complex. We say $\mathcal{G}$ is a \emph{symplectic complex}, if there exists 
a symplectic pairing $$\beta: u^*\mathcal{G}\otimes u^*\mathcal{G} \lra \cO_U, $$ satisfying that 
$$q_1^*\beta=q_2^*\beta,$$ where $q_1$ and $q_2$ are the projections in the following fiber diagram
\begin{equation*}
\xymatrix{
U\times_{\cX}U \ar^{q_1}[r] \ar_{q_2}[d] & U \ar^u[d] \\
U \ar_u[r] & \cX.
}
\end{equation*}
\end{definition}

Based on the definition of symplectic complex, we can now define the following notion of symplectic stacks: 

\begin{definition}\label{_definition_stack_}
Let $\cX$ be a scheme or an algebraic stack. We call $\cX$ a \textit{symplectic stack}, if its cotangent complex $\L_{\cX}$ is a symplectic complex. 
\end{definition}

From this definition we immediately see: 

\begin{example}
Any holomorphic symplectic manifold $X$ is a symplectic stack, because a nowhere degenerate
holomorphic 2-form defines a symplectic pairing on the tangent bundle $T_X$, or equivalently the cotangent bundle $\Omega_X$. 
\end{example}

A slightly more general situation is the following: 

\begin{example}
Let $X$ be a holomorphic symplectic manifold with a nowhere degenerate holomorphic 2-form $\sigma$, and $G$ is a finite group 
acting on $X$ preserving the symplectic form $\sigma$. Let $q: X\lra\cX=[X/G]$ be the stacky quotient map. Then the Deligne-Mumford stack 
$\cX$ is a symplectic stack. 
\end{example}

In fact, by Proposition \ref{_cotangent_functorial_}, we know that $$q^*\L_{\cX}=\L_X=\Omega_X,$$ because $G$ is finite. The holomorphic
symplectic form $\sigma$ defines a symplectic pairing on $\Omega_X$. Since the $G$-action preserves $\sigma$, this symplectic
pairing descends to $\L_{\cX}$, which shows $\cX$ is a symplectic stack. 

We mentioned that the cotangent complex of a stack could lie over all degrees not larger than 1. However, for a symplectic stack, 
due to the isomorphism between the cotangent complex and its dual, its perfect amplitude could only be within the interval $[-1,1]$. 
Therefore, the cotangent complex could have only two types: either a single locally free sheaf sitting in degree 0, 
or a perfect complex in degree $[-1,1]$. 
The above examples fall in the first type. However, all the calculations from previous sections provide us examples of the second type. 

\begin{theorem}\label{_moduli_symplectic_stack_}
The moduli stack $\cM$ of semistable sheaves on a K3 surface is a symplectic stack.
\end{theorem}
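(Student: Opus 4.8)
The plan is to verify Definition \ref{_definition_stack_} directly, using the atlas $q:Q\lra\cM$ and the explicit description of $q^*\L_\cM$ obtained in Theorem \ref{_stack_cotangent_complex_}. Writing $E=\rpils\crhom(\cF,\cF)_0^\vee[-1]$, that theorem provides a quasi-isomorphism $E\stackrel{\cong}{\lra}q^*\L_\cM$, so by Definition \ref{_bad_definition_for_stack_} it suffices to produce a non-degenerate anti-symmetric pairing on $E$ in the sense of Definition \ref{_definition_complex_}, and to check that its two pullbacks to $Q\times_\cM Q$ agree.

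The source of the pairing is Serre duality on the K3 fibers. First I would apply Grothendieck-Verdier duality to the projection $\pi:Q\times X\lra Q$. Since $X$ is a K3 surface, its relative dualizing sheaf $\omega_\pi$ is trivial, so duality yields a canonical isomorphism
\begin{equation*}
\rpils\crhom(\cF,\cF)^\vee \cong \rpils\crhom(\cF,\cF)[2].
\end{equation*}
Next I would check that this isomorphism is compatible with the trace decomposition \ref{_middle_decomposition_}, so that it restricts to the traceless summands to give $\rpils\crhom(\cF,\cF)_0^\vee \cong \rpils\crhom(\cF,\cF)_0[2]$; this is where one uses that the trace pairing is orthogonal to the reduced part. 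Shifting by $[-1]$ and recalling $E^\vee\cong \rpils\crhom(\cF,\cF)_0[1]$, this converts into an isomorphism $\theta:E\lra E^\vee$, the candidate symplectic form. Non-degeneracy is then automatic, since $\theta$ is an isomorphism.

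The crux of the argument is the anti-symmetry condition $\theta^\vee=-\theta$ of diagram \ref{_dual_new_check_}. Fiberwise over a closed point $p\in Q$, the pairing is the composition of the Yoneda product with the trace
\begin{equation*}
\ext^i(F_p,F_p)_0 \otimes \ext^{2-i}(F_p,F_p)_0 \lra \ext^2(F_p,F_p) \stackrel{tr}{\lra} H^2(\cO_{X_p})=\C,
\end{equation*}
and the graded-commutativity of the Yoneda product shows this pairing is symmetric. The extra sign that turns it into an anti-symmetric pairing on $E$ comes precisely from the shift by $[-1]$ together with the sign conventions of \cite{Conrad-2000} recalled in the remarks after Definition \ref{_definition_complex_}; tracking these signs carefully through the identification $E^\vee\cong \rpils\crhom(\cF,\cF)_0[1]$ is the main technical obstacle. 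Morally this is Mukai's observation that the holomorphic $2$-form on the smooth locus of the moduli space is anti-symmetric, lifted here to the level of complexes.

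Finally I would verify the descent condition $p_1^*\beta=p_2^*\beta$ on $Q\times_\cM Q$. By Lemma \ref{_sheaf_identify_} the two pullbacks $p_1^*\cF$ and $p_2^*\cF$ of the universal sheaf coincide, and since the pairing $\theta$ is built functorially out of $\cF$ via the canonical Serre duality isomorphism, both pullbacks of $\beta$ are induced by the same data and therefore agree. Combining the anti-symmetry, non-degeneracy, and this cocycle compatibility shows that $q^*\L_\cM$ carries a symplectic pairing satisfying the descent condition of Definition \ref{_bad_definition_for_stack_}, so $\L_\cM$ is a symplectic complex and $\cM$ is a symplectic stack.
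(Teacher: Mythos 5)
Your proposal follows essentially the same route as the paper's proof: the quasi-isomorphism of Theorem \ref{_stack_cotangent_complex_}, the pairing coming from relative Serre duality (Yoneda product composed with trace) with $\omega_{\pi}$ trivialized because $X$ is K3, restriction of that pairing to the traceless summand, anti-symmetry arising from the degree shift combined with the sign conventions of \cite{Conrad-2000}, and descent via Lemma \ref{_sheaf_identify_}. The only substantive difference is that the paper actually carries out the sign computation you defer as ``the main technical obstacle,'' showing that $\textrm{tr}(e\cup e')=(-1)^{\deg(e)\deg(e')}\textrm{tr}(e'\cup e)$ becomes, after the shift and using $\deg(e)+\deg(e')=0$, a pairing with sign $(-1)^{\deg(e)\deg(e')+1}$, i.e.\ anti-symmetric.
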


\begin{proof}
By Proposition \ref{_stack_cotangent_complex_}, we know that the pullback of the cotangent complex via the quotient map is
$$\rpils\crhom(\cF,\cF)_0^\vee[-1] \stackrel{\cong}{\lra} q^*\L_{\cM}.$$
To prove the cotangent complex $\L_{\cM}$ is a symplectic complex, we first show that there exists a symplectic 
pairing on $q^*\L_{\cM}$, then show that the symplectic pairing satisfies the compatibility condition in Definition 
\ref{_bad_definition_for_stack_} for a symplectic complex on a stack. 

The relative Serre duality tells us that the composition of the derived Yoneda product and the trace map
\begin{eqnarray*}
&  & \rpils\crhom(\cF,\cF) \otimes \rpils\crhom(\cF,\cF\otimes\omega_{\pi}) \\
& \stackrel{\cup}{\lra} & \textrm{R}^2\pi_*\crhom(\cF,\cF\otimes\omega_{\pi})[-2] \\
& \stackrel{\textrm{tr}}{\lra} & \textrm{R}^2\pi_*\omega_{\pi}[-2] \cong \cO_Q[-2]
\end{eqnarray*}
is a non-degenerate bilinear form. 

Due to the fact that $X$ is a K3 surface, the relative dualizing sheaf $\omega_{\pi}$ of the projection $\pi: Q\times X\lra Q$ 
has a trivialization given by the pullback of generator of $H^{2,0}(X)$ via the second projection. We denote the isomorphism by
$$\sigma: \cO_{Q\times X} \lra \omega_{\pi}.$$
Then we can also write the above non-degenerate bilinear form as
\begin{eqnarray*}
&  & \rpils\crhom(\cF,\cF) \otimes \rpils\crhom(\cF,\cF) \\
& \stackrel{\cup}{\lra} & \textrm{R}^2\pi_*\crhom(\cF,\cF)[-2] \\
& \stackrel{\textrm{tr}}{\lra} & \cO_Q[-2]
\end{eqnarray*}
Furthermore, this trace map also satisfies the symmetry condition (\cite[Equation 10.3]{Huybrechts-2010b})
$$\textrm{tr}(e\cup e')=(-1)^{\deg (e) \deg (e')}\textrm{tr}(e'\cup e). $$
After a degree shift we get a bilinear form
$$ \rpils\crhom(\cF,\cF)[1] \otimes \rpils\crhom(\cF,\cF)[1] \lra \cO_Q $$
which is still non-degenerate, and the symmetry condition becomes
\begin{eqnarray*}
\textrm{tr}(e\cup e') &=& (-1)^{(\deg (e)+1) (\deg (e')+1)}\ \textrm{tr}(e'\cup e) \\
                               &=& (-1)^{\deg (e) \deg (e') + \deg (e) + \deg (e') +1}\ \textrm{tr}(e'\cup e)\\
                               &=& (-1)^{\deg (e) \deg (e') +1}\ \textrm{tr}(e'\cup e).
\end{eqnarray*}
The reason for the last equality is that: for $\textrm{tr}(e \cup e')$ to lie in the only non-trivial degree of the 
complex $\cO_Q$, we must have $$\deg (e) + \deg (e')=0.$$ 
Comparing the above equation with the sign convention in the equation \ref{_tensor_product_commute_}, 
we realize that, switching the two factors in the trace map actually introduces an extra negative sign.
This verifies the condition in equation \ref{_tensor_anticommutativity_},
therefore the bilinear pairing on $\rpils\crhom(\cF,\cF)[1]$ is anti-symmetric. 

It's also clear that the restriction of the above symplectic pairing on the traceless complex 
$\rpils\crhom(\cF,\cF)_0[-1]$ again defines a symplectic pairing. It suffices to show that it's still non-degenerate.
In fact, the above application of Serre duality can also be written in the form of
\begin{equation}\label{_second_interpretation_}
\rpils(\crhom(\cF,\cF))\otimes \rpils\crhom(\crhom(\cF, \cF), \omega_{\pi}) \lra \textrm{R}^2\pi_*\omega_{\pi}[-2],
\end{equation}
which can also be thought as the relative 
Serre duality on a single sheaf $\crhom(\cF,\cF)$. 
Note that from the decomposition 
\begin{equation}\label{_trace_decomposition_old_}
\crhom(\cF,\cF)=\crhom(\cF,\cF)_0\oplus \cO_{Q\times X}
\end{equation}
we also get
$$\crhom(\cF,\cF)^\vee=\crhom(\cF,\cF)_0^\vee\oplus \cO_{Q\times X}^\vee.$$ Together with the isomorphism 
$\omega_{\pi}=\cO_{Q\times X}$, we immediately obtain that $\rpils\crhom(\crhom(\cF, \cF)_0, \omega_{\pi})$ is also 
naturally a direct summand of $\rpils\crhom(\crhom(\cF, \cF), \omega_{\pi})$. Therefore, the restriction of 
equation \ref{_second_interpretation_} on the traceless complex becomes
\begin{equation}
\rpils(\crhom(\cF,\cF)_0)\otimes \rpils\crhom(\crhom(\cF, \cF)_0, \omega_{\pi}) \lra \textrm{R}^2\pi_*\omega_{\pi}[-2],
\end{equation}
which can be thought as the Serre duality on a single sheaf $\crhom(\cF,\cF)_0$, therefore is again non-degenerate. Hence
after shifting, we get a symplectic pairing on $\rpils\crhom(\cF,\cF)_0[1]$, and by taking dual we get a symplectic pairing on
$\crhom(\cF,\cF)_0^\vee[-1]$.

So far we have proved that $q^*\L_{\cM}$ is a symplectic complex. 

Finally, we want to show that the symplectic pairing on the complex $\rpils\crhom(\cF,\cF)_0$ is $G$-equivariant. For this 
purpose we just need to show that, the pull back of the symplectic pairing via the maps $m$ and $pr_2$ in diagram 
\ref{_fiber_product_} agree with each other. 

By flatness and 
\cite[Proposition 5.8, 5.9]{Hartshorne-1966}, as well as the fact that the decomposition \ref{_trace_decomposition_old_}
is natural under pullback, we conclude that the pullback of the Serre duality pairing
\begin{equation}\label{_before_pull_back_}
\rpils\crhom(\cF,\cF)_0\otimes\rpils\crhom(\cF,\cF)_0\lra \textrm{R}^2\pi_*\cO_Q[-2]
\end{equation}
via $m$ is 
$$\rpils\crhom(m^*\cF,m^*\cF)_0\otimes\rpils\crhom(m^*\cF,m^*\cF)_0\lra \textrm{R}^2\pi_*\cO_{G\times Q}[-2],$$
which is again the Serre duality pairing on $G\times Q$ by the functoriality of Serre duality. 

Similarly, if we pull back the pairing \ref{_before_pull_back_} via the other map $pr_2$, we again get a Serre duality pairing
$$\rpils\crhom(pr_2^*\cF,pr_2^*\cF)_0\otimes\rpils\crhom(pr_2^*\cF,pr_2^*\cF)_0\lra \textrm{R}^2\pi_*\cO_{G\times Q}[-2].$$

In Lemma \ref{_sheaf_identify_}, we have showed that the pullback of the universal sheaf $\cF$ via $m$ and $pr_2$ are 
canonically isomorphic, denoted by $$\tilde{\cF}=m^*\cF=pr_2^*\cF.$$ Therefore the above two pullback maps agree with 
each other, and we conclude that the moduli stack $\cM$ of the semistable sheaves on a K3 surface is a symplectic stack in
the sense of Definition \ref{_definition_stack_}.

\end{proof}

\bibliographystyle{alpha}
\bibliography{references}

\end{document}